\DeclareMathOperator*{\esssup}{ess\,sup}
\DeclareMathOperator*{\essinf}{ess\,inf}
\newcommand{\ee}{\varepsilon}
\newcommand{\N}{{\mathbb N}}
\newcommand{\Q}{{\mathbb Q}}
\newcommand{\R}{{\mathbb R}}
\newcommand{\Z}{{\mathbb Z}}
\newcommand{\cH}{{\mathcal H}}
\newcommand{\cI}{\mathcal I}
\newcommand{\cK}{\mathcal K}
\newcommand{\cL}{\mathcal L}
\newcommand{\cM}{\mathcal M}
\newcommand{\cF}{\mathcal F}
\newcommand{\cJ}{\mathcal J}
\newcommand{\cG}{\mathcal G}
\newcommand{\cE}{{\mathcal E}}
\newcommand{\cV}{{\mathcal V}}
\newcommand{\cW}{{\mathcal W}}
\newcommand{\cU}{{\mathcal U}}
\newtheorem{thm}{Theorem}[section]
\newtheorem{lem}[thm]{Lemma}
\newtheorem{cor}[thm]{Corollary}
\newtheorem{prop}[thm]{Proposition}
\newtheorem{ex}[thm]{Example}
\newtheorem{prob}[thm]{Problem}
\newcommand{\inte}{{\mathrm{int}}\,}
\newcommand{\cl}{{\mathrm{cl}}\,}
\newcommand{\conv}{{\mathrm{conv}}\,}
\newcommand{\myspan}{{\mathrm{span}}\,}
\newcommand{\Id}{{\mathrm{Id}}}
\newcommand{\di}{\diamondsuit}
\numberwithin{equation}{section}
\begin{document}
%\hfill\today
\bigskip

\title{Approximation of rearrangements by polarizations}
\author[Gabriele Bianchi, Richard J. Gardner, Paolo Gronchi, and Markus Kiderlen]{Gabriele Bianchi, Richard J. Gardner, Paolo Gronchi, and Markus Kiderlen}
\address{Dipartimento di Matematica e Informatica ``U. Dini", Universit\`a di Firenze, Viale Morgagni 67/A, Firenze, Italy I-50134} \email{gabriele.bianchi@unifi.it}
\address{Department of Mathematics, Western Washington University,
Bellingham, WA 98225-9063,USA} \email{Richard.Gardner@wwu.edu}
\address{Department of Mathematical Sciences, University of Aarhus,
Ny Munkegade, DK--8000 Aarhus C, Denmark} \email{kiderlen@math.au.dk}
\thanks{First author supported in part by the Gruppo
Nazionale per l'Analisi Matematica, la Probabilit\`a e le loro
Applicazioni (GNAMPA) of the Istituto Nazionale di Alta Matematica (INdAM). Fourth author supported by the Aarhus University Research Foundation grant AUFF-E-2024-6-13.}
\subjclass[2010]{Primary: 28A20, 52A20; secondary: 46E30, 52A38} \keywords{P\'olya-Szeg\H{o} inequality, convex body, Schwarz symmetrization, rearrangement, polarization, smoothing, modulus of continuity, contraction}
\dedicatory{In fond memory of Paolo Gronchi, whose brilliance often showed us the way\\ and whose presence always brought us joy}

\begin{abstract}
The symmetric decreasing rearrangement of functions on $\R^n$ features in several seminal inequalities, such as the P\'olya-Szeg\H{o} inequality.  The latter was shown by the authors \cite{BGGK2} to hold for all smoothing rearrangements, a class that includes the more general $(k,n)$-Steiner rearrangement, as well as others introduced by Brock and by Solynin.  The theory of rearrangements and their associated set maps is developed, with an emphasis on approximation, particularly by polarizations. The P\'olya-Szeg\H{o} inequality holds with equality for polarizations, so is proved relatively easily for rearrangements that can be suitably approximated by them.  One goal here is to show that the Brock rearrangements cannot be approximated in such a way.  It turns out that under mild conditions, each set map associated with a rearrangement has in turn an associated contraction map from $\R^n$ to $\R^n$.  With this new analytical tool, several general results on the approximation of rearrangements are also proved.
\end{abstract}
\maketitle

\section{Introduction}

Steiner symmetrization, introduced by Jacob Steiner around 1836 in the course of his work on the isoperimetric inequality, is the forerunner of many similar processes. Schwarz symmetrization followed some fifty years later, and shortly afterwards, in 1899 according to Talenti in his remarkable survey \cite[Section~2]{Tal} on rearrangements, Carlo Somigliana employed decreasing rearrangements of functions in hydrostatics. Nowadays there are many different types of symmetrization of sets and rearrangements of functions.  They all share the general idea of replacing a set or a function by one which has more desirable or tractable properties, while preserving some important quantity. There are an astounding number of applications to diverse areas, both in mathematics and beyond.  For more information and hints to the literature, we refer to the introductions to our previous papers \cite{BGG1}, \cite{BGG2}, for symmetrization, and to those of \cite{BGGK1}, \cite{BGGK2}, for rearrangements.

The focus here is on rearrangements.  Suppose that $X$ is a class of functions in $\cM(\R^n)$, the measurable functions on $n$-dimensional Euclidean space $\R^n$, that contains the characteristic functions of sets in $\cL^n$, the $\cH^n$-measurable sets of finite measure.  A {\em rearrangement} $T$ on $X$ is a map $T:X\to\cM(\R^n)$ with two properties:  $T$ is monotonic (meaning here monotonic up to sets of $\cH^n$-measure zero) and equimeasurable (preserving the $\cH^n$-measure of superlevel sets).  (Note that the term ``rearrangement" is also commonly used for the image $Tf$ of $f$.) Perhaps the most important example is the {\em symmetric decreasing rearrangement} on $\cV(\R^n)$, the class of nonnegative measurable functions on $\R^n$ that vanish at infinity.  (See Sections~\ref{subsec:notations} and~\ref{Properties} for definitions and terminology.)  This maps $f\in \cV(\R^n)$ to $f^{\#}\in \cV(\R^n)$ (another common notation is $f^{\star}$), where each superlevel set $\{x\in \R^n: f^{\#}(x)> t\}$ is a ball in $\R^n$, of volume equal to $\cH^n(\{x\in \R^n: f(x)> t\})$ and with center at the origin.  In other words, the subgraph $\{(x,t)\in \R^n\times \R: f^{\#}(x)\ge t\}$ of $f^{\#}$ results from applying Schwarz symmetrization in $\R^{n+1}$ (specifically, $(n,n+1)$-Steiner symmetrization, see Example~\ref{ex429}(iii) below, where $H$ is the $x_{n+1}$-axis) to the subgraph $\{(x,t)\in \R^n\times \R: f(x)\ge t\}$ of $f$. Other terms, such as ``spherically symmetric rearrangement" or ``radially decreasing rearrangement" have been used in the literature.

Symmetrals of sets and rearrangements of functions often appear in fundamental inequalities.  For example, a standard version of the P\'olya-Szeg\H{o} inequality states that if $1\le p\le \infty$ and  $f\in W^{1,p}(\R^n)\cap \cV(\R^n)$, then $f^{\#}\in W^{1,p}(\R^n)$ and
$$
\int_{\R^n}\|\nabla f^{\#}(x)\|^p\, dx\leq \int_{\R^n}\|\nabla f(x)\|^p\; dx.
$$
See, e.g., \cite[Theorem~3.20 and~p.~113]{Baer}; when $p=\infty$, the integrals of $p$th powers are replaced by the essential suprema over $\R^n$. In \cite[Corollary~6.5]{BGGK2}, we proved that the same result holds when the symmetric decreasing rearrangement is replaced by any smoothing rearrangement on $\cV(\R^n)$, or equivalently, any that reduces the modulus of continuity of functions in $\cV(\R^n)$.  These terms are defined in Section~\ref{Properties}.  The class of smoothing rearrangements on $\cV(\R^n)$ includes, as well as the symmetric decreasing rearrangement, several others for which the P\'olya-Szeg\H{o} inequality was known to hold.

One particularly simple smoothing rearrangement is called {\em polarization} (or {\em two-point symmetrization}) with respect to an {\em oriented} hyperplane $H$.  See \cite[p.~3]{BGGK1} for historical remarks.  This takes a function $f:\R^n\to \R$ and maps it to
\begin{equation}\label{pol}
P_Hf(x)=\begin{cases}
\max\{f(x),f(x^{\dagger})\},& {\text{if $x\in H^+$,}}\\
\min\{f(x),f(x^{\dagger})\},& {\text{if $x\in H^-$}},
\end{cases}
\end{equation}
where $^{\dagger}$ denotes the reflection in $H$ and where $H^+$, $H^-$, are the two closed half-spaces bounded by $H$ and determined by its orientation.  Considering their very basic definition, polarizations turn out to be amazingly versatile, because a variety of rearrangements can be approximated by them. Several types of approximation are defined in Section~\ref{Approxrearrange}. If $\cU$ and $\cW$ are two families of mappings from $X\subset {\cV}(\R^n)$ to ${\cV}(\R^n)$, we call a map $T\in\cU$ {\em approximable in $L^p$} on $X$ by maps in $\cW$ if there are $T_k\in \cW$, $k\in \N$, such that $\|T_kf-Tf\|_p\to 0$ as $k\to \infty$, for all $f\in X$.  If the maps $T_k$ are allowed to depend on $f\in X$, we say that $T\in\cU$ is {\em weakly approximable in $L^p$} on $X$ by maps in $\cW$.  The word {\em sequentially} is added if $T_k=S_k\circ S_{k-1}\circ\cdots \circ S_1$ for some fixed sequence $(S_k)$ of maps in $\cW$.  Van Schaftingen \cite[Theorem~1]{VS2} improved on earlier work of his  \cite[Theorem~4.4]{VS1} and of Brock and Solynin \cite[Theorem~6.1]{BS} by showing that Steiner and Schwarz ({i.e.~$(k,n)$}-Steiner) rearrangements are sequentially approximable, in an explicit way, in $L^p$ on $L^p_+(\R^n)$, $1\le p<\infty$, by polarizations.  Hajaiej proved the same result independently in \cite{Haj}. Solynin \cite[Lemmas~7.4 and~9.2]{Sol1} proved that Solynin rearrangements (see Example~\ref{ex429}(iv) below) are weakly approximable in $L^p$, $1\le p<\infty$, on $L^p_+(\R^n)$ by finite compositions of polarizations.

An elegant method to establish inequalities involving such approximable rearrangements is to prove them first for polarizations and then use the approximation.  This approach was pioneered by Baernstein and Taylor \cite[Section~2]{BT}, who were also the first to approximate the symmetric decreasing rearrangement by polarizations.  The method can be applied to the P\'olya-Szeg\H{o} inequality, since it holds (actually with equality, see \cite[Proposition~3.12]{Baer}) for polarizations, and this was done by Brock and Solynin \cite[Theorem~8.2]{BS}, Solynin \cite[Theorem~10.2]{Sol1}, and Van Schaftingen \cite[Section~6]{VS1}, for the approximable rearrangements mentioned above.  If all smoothing rearrangements could be suitably approximated by polarizations, our result in \cite{BGGK2} would be relatively easy to prove.  However, we show in Corollary~\ref{corBrockWeakfunctions} that Brock rearrangements (see Example~\ref{ex429}(v) below) cannot even be weakly approximated in $L^p$, on any subfamily of $L^p_+(\R^n)$ containing the $C^{\infty}$ functions with compact support, by finite compositions of polarizations. This proves our conjecture in \cite[p.~6]{BGGK2}, and since this type of approximation seems to be the weakest that would serve the purpose, it also appears, for the first time, to put a limit on the applicability of polarizations in this context.

If $A\subset \R^n$, we may apply \eqref{pol} to its characteristic function $1_A$ and view polarization as a map on sets, defined by
\begin{equation}\label{pol2}
(P_HA)\cap H^+=\left(A\cup A^{\dagger}\right)\cap H^+~~\quad{\text{and}}~~\quad
(P_HA)\cap H^-=\left(A\cap A^{\dagger}\right)\cap H^-,
\end{equation}
where $A^{\dagger}$ is the reflection of $A$ in $H$. See \cite[Figure~1(a)]{BF} for an illustration. It is easy to check that $P_H$ is monotonic and measure preserving on measurable sets. Generally (see Propositions~\ref{fromSetToFct} and~\ref{lemapril30}, versions of which were proved in \cite{BGGK1}) mild conditions ensure that a rearrangement $T$ gives rise to a well-defined, monotonic, and measure-preserving set map $\di_T$, defined by $\di_T A=\{x: T1_A(x)=1\}$, such that
$$
\{x: Tf(x)\ge t\}=\di_T\{x: f(x)\ge t\}\quad{\text{and}}\quad\{x: Tf(x)> t\}=\di_T\{x: f(x)> t\},
$$
essentially, for $t>\essinf f$. (For example, if $T$ is the polarization map defined by \eqref{pol}, then $\di_T$ is the set map defined by \eqref{pol2}.) The rearrangement $T$ is essentially determined by $\di_T$ via \eqref{eqoct62} below, and under these circumstances, the rearrangements and the set maps go hand in hand.

If $B(x,r)$ is the closed ball with center $x\in \R^n$ and radius $r>0$, then $P_HB(x,r)=B(\psi(x),r)$, where $\psi:\R^n\to \R^n$ is the identity on $H^+$ and the reflection in $H$ on $H^-$. The map $\psi$ is a contraction, and it completely describes the action of the polarization $P_H$ on balls. It can be visualized in terms of ``paper-folding" at $H$ and is a useful intuitive tool. Our new results begin by generalizing this observation. In Lemma~\ref{lem11}, we show that if $\di$ is monotonic, measure preserving, and maps balls to balls, then there are contractions $\psi_{\di,r}:\R^n\to \R^n$ such that $\di B(x,r)=B(\psi_{\di,r}(x),r)$, for all  $x\in \R^n$ and $r>0$.  Moreover (see Lemma~\ref{lem11_inependent}), the contraction is independent of $r$ when $\di$ is smoothing. It is also independent of $r$ when $\di$  respects $H$-cylinders, i.e., is such that if an infinite spherical cylinder orthogonal to $H$ contains a set, it must also contain its image, essentially; in this case, $\psi_\di$ is constant on each hyperplane parallel to $H$ and thus reduces to a contraction $\varphi_\di:\R\to\R$. The latter was introduced and analyzed in \cite[Lemma~6.2]{BGGK1}.  These contractions not only give complete control over the images under $\di$ of balls, but also provide very useful analytical interpretations of the underlying geometry. Example~\ref{ex310} provides explicit formulas for the contractions associated with the set maps of principal interest here, those from Example~\ref{ex429}.

In Section~\ref{Approxrearrange}, the various types of approximation are defined and the previously known results for the standard rearrangements are summarized.  The main new results are Lemma~\ref{lemLpclosure} and Theorem~\ref{thm514}. The former gives conditions ensuring that the approximability of a rearrangement in $L^p$ on a class $X$ can be extended to the $L^p$ closure of $X$, while the latter specifies how the approximability of a rearrangement relates to that of its associated set map. With Theorem~\ref{thm514} in hand,  Lemma~\ref{Markuslem} shows that in contrast to Van Schaftingen's result for Steiner and Schwarz rearrangements mentioned above, Brock and Solynin rearrangements are not sequentially approximable in $L^p$ by polarizations.

Let $H$ be a fixed hyperplane in $\R^n$.  The focus in Section~\ref{Approximation} is on the class $\cJ_H(\cE)$ of
monotonic and measure-preserving maps $\di:\cE\to\cL^n$, where $\cE$ contains all convex bodies, that map balls to balls and respect $H$-cylinders.  The class $\cJ_H({\cL}^n)$ contains all the set maps of primary interest in this paper.  We introduce a special class $\cI$ of contractions that is closed under composition and pointwise convergence, and which contains (see Corollary~\ref{cor313}) $\varphi_{\di}$ for each finite composition $\di$ of polarizations with respect to hyperplanes parallel to $H$.  The key result is Theorem~\ref{thmgpPrime}, which says that if  $\di\in \cJ_H(\cK_n^n)$ and $\varphi_{\di}\not\in \cI$, then $\di$ cannot be weakly approximated on ${\cK}^n_n$ by finite compositions of polarizations with respect to hyperplanes parallel to $H$.  The fact (Lemma~\ref{lem314}) that a contraction $\varphi:\R\to\R$ is not in $\cI$ if there exists a point where $\varphi'$ exists and is different from 0, 1, and -1, allows us to conclude in Corollary~\ref{cor314} that the Brock set map with parameter $0<b<1$ cannot be weakly approximated on ${\cK}^n_n$ by finite compositions of polarizations with respect to hyperplanes parallel to $H$.

The goal of Section~\ref{arb} is to obtain information on approximation by finite compositions of arbitrary polarizations, not just those taken with respect to parallel hyperplanes.  Here our new contractions $\psi_{\di}:\R^n\to\R^n$ come into play.  A crucial observation, proved in Theorem~\ref{lem5}, is that if a contraction $\psi:\R^n\to\R^n$ can be approximated (pointwise) on $B^n$ by finite compositions of contractions associated with polarizations, and the restriction of $\psi$ to the unit ball $B^n$ is injective, then $\psi$ preserves the volume of $B^n$.  The immediate payoff (see Corollary~\ref{cor42}) is that the Brock set map $\di_{B_H}$ with parameter $0<b<1$ cannot be approximated on ${\cK}^n_n$ by finite compositions of polarizations. Theorem~\ref{lem5} is also used in proving Corollary~\ref{corBrockWeak}, which implies that $\di_{B_H}$ with parameter $0<b<1$ cannot be weakly approximated on ${\cF}_n^n$, the class of finite unions of balls, by finite compositions of polarizations.  The latter result requires a good deal more work besides, but together with Lemma~\ref{lemLpclosure} and Theorem~\ref{thm514}, it yields Corollary~\ref{corBrockWeakfunctions}, the result mentioned above concerning Brock rearrangements.

Section~\ref{Solynin} is devoted to Solynin set maps and rearrangements.  The main result, Corollary~\ref{gabcor}, shows that the Solynin rearrangement $So_H$ with respect to a hyperplane $H$ is approximable in $L^p$ on $L^p_+(\R^n)$ by finite compositions of polarizations and hence, by Theorem~\ref{thm514}, the associated set map $\di_{So_H}$ is approximable on ${\mathcal{L}}^n$ by finite compositions of polarizations.

We do not know if Solynin set maps and rearrangements are weakly sequentially approximable by polarizations.  This and a few other open questions are posed in Section~\ref{Questions}.

We are very grateful to a referee of our paper \cite{BGGK2} who drew our attention to Brock's articles \cite{Bro1} and \cite{Bro2}.

\section{Preliminaries}\label{subsec:notations}

As usual, $S^{n-1}$ denotes the unit sphere and $o$ the origin in Euclidean $n$-space $\R^n$.  Unless stated otherwise, we assume throughout that $n\ge 2$.   The standard orthonormal basis for $\R^n$ is $\{e_1,\dots,e_n\}$ and the Euclidean norm is denoted by $\|\cdot\|$.  The term {\em ball} in $\R^n$ will always mean a {\em closed} $n$-dimensional ball unless otherwise stated.  The unit ball in $\R^n$ will be denoted by $B^n$ and $B(x,r)$ is the ball with center $x$ and radius $r$.  If $x,y\in \R^n$, we write $x\cdot y$ for the inner product and $[x,y]$ for the line segment with endpoints $x$ and $y$. If $x\in \R^n\setminus\{o\}$, then $x^{\perp}$ is the $(n-1)$-dimensional subspace orthogonal to $x$. Throughout the paper, the term {\em subspace} means a linear subspace.

If $A$ is a set,  we denote by $\cl A$, $\inte A$, and $\dim A$ the {\it closure}, {\it interior}, and {\it dimension} (that is, the dimension of the affine hull) of $A$, respectively.  If $H$ is a subspace of $\R^n$, then $A|H$ is the (orthogonal) projection of $A$ on $H$ and $x|H$ is the projection of a vector $x\in \R^n$ on $H$.

If $A$ and $B$ are sets in $\R^n$ and $t\in \R$, then we denote by $tA=\{tx:x\in A\}$ the {\em dilate} of $A$ by the factor $t$, and by
$$A+B=\{x+y: x\in A, y\in B\}$$
the {\em Minkowski sum} of $A$ and $B$.  We write $-A=(-1)A$ for the reflection of $A$ in the origin and call $A$ {\em origin symmetric} or {\it $o$-symmetric} if $-A=A$.

If $H$ is an oriented hyperplane in $\R^n$, then $H^+$ and $H^-$ denote the two closed halfspaces bounded by $H$ and determined by the orientation.

We write ${\mathcal H}^k$ for $k$-dimensional Hausdorff measure in $\R^n$, where $k\in\{1,\dots, n\}$.  When dealing with relationships between sets in $\R^n$ or functions on $\R^n$, the term {\em essentially} means up to a set of $\cH^n$-measure zero. By $\kappa_n$ we denote the volume ${\mathcal H}^n(B^n)$ of the unit ball in $\R^n$.

We denote by ${\mathcal C}^n$, ${\mathcal B}^n$, ${\mathcal M}^n$, and ${\mathcal L}^n$ the class of nonempty compact sets, bounded Borel sets, ${\mathcal H}^n$-measurable sets, and ${\mathcal H}^n$-measurable sets of finite ${\mathcal H}^n$-measure, respectively, in $\R^n$.  The family of finite unions of balls in $\R^n$ will be denoted by $\cF_n^n$.

Let ${\mathcal K}^n$ be the class of nonempty compact convex subsets of $\R^n$ and let ${\mathcal K}^n_n$ be the class of {\em convex bodies}, i.e., members of ${\mathcal K}^n$ with interior points. The texts by Gruber \cite{Gru07} and Schneider \cite{Sch14} contain a wealth of useful information about convex sets and related concepts.

Let ${\mathcal{M}}(\R^n)$ (or ${\mathcal{M}}_+(\R^n)$) denote the set of real-valued (or nonnegative, respectively) measurable functions on $\R^n$ and let ${\mathcal{V}}(\R^n)$ denote the set of functions $f \in {\mathcal{M}}_+(\R^n)$ such that ${\mathcal{H}}^{n}(\{x:f(x)>t\})<\infty$ for all $t>0$.  Members of ${\mathcal{V}}(\R^n)$ are often said to {\em vanish at infinity}. For $1\le p< \infty$, let $L^p_+(\R^n)=\{f\in L^p(\R^n):f\ge 0\}$ and note that $L^p_+(\R^n)\subset \cV(\R^n)$.

Our notation for Sobolev spaces such as $W^{1,p}(\R^n)$ is standard. Definitions can be found in many texts, such as \cite{LL}.

If $T:X\to \cM(\R^n)$, where $X\subset \cV(\R^n)$, we shall usually write $Tf$ instead of $T(f)$.  If $T_0,T_1:X\to \cM(\R^n)$ are maps, we say that $T_0$ is {\em essentially equal} to $T_1$ if for $f\in X$, $T_0f(x)=T_1f(x)$ for ${\mathcal{H}}^{n}$-almost all $x\in \R^n$, where the exceptional set may depend on $f$.

If $A\in \cM^n$, then
$$
\Theta(A,x)=\lim_{r\to0^+}\frac{\cH^{n}(A\cap B(x,r))}{\cH^n(B(x,r))},
$$
is the {\em density} of $A$ at $x$, provided the limit exists.  Elements of the set
$$
A^*=\{x\in \R^n: \Theta(A,x)=1\}.
$$
are called {\em Lebesgue density points}, or simply density points, of $A$.  Note that $A^*=A$, essentially, by the Lebesgue density theorem (see, e.g., \cite[Theorem~1.5.2]{Pfe}).

\section{Properties of maps}\label{Properties}

Let $i\in \{1,\dots,n-1\}$ and let $H$ be a fixed $i$-dimensional subspace in $\R^n$.  For ${\mathcal{E}}\subset {\cL}^n$, we consider a map $\di:{\mathcal{E}}\to {\mathcal{L}}^n$ and define
\begin{equation}\label{eqaug31}
\di^*A=(\di A)^*
\end{equation}
for each $A\in {\mathcal{E}}$.  We assume (here and throughout the paper) that the properties listed below hold for all $A,B\in {\mathcal{E}}$.

\begin{enumerate}
        \item[1.]  ({\em Monotonic}) \quad If $A\subset B$, essentially, then $\di A\subset \di B$, essentially.
		\item[2.]  ({\em Measure preserving}) \quad ${\mathcal H}^n(\di A)={\mathcal H}^n(A)$.
		\item[3.] ({\em Maps balls to balls})\quad If $K=B(x,r)$, essentially,  then $\di K=B(x',r')$, essentially.
		\item[4.]  ({\em Respects $H$-cylinders})\quad If $A\subset (B(x,r)\cap H)+H^{\perp}$, essentially, then $\di A\subset (B(x,r)\cap H)+H^{\perp}$, essentially.
		\item[5.]  ({\em Smoothing}) \quad Whenever $d>0$,
		\begin{equation}\label{eq:smothDef}
			(\di^*A)+d B^n\subset \di^*(A+d B^n)=\di(A+d B^n),
		\end{equation}
		essentially, for each bounded $A\in \cE$ with $A+dB^n\in {\mathcal{E}}$, where $\di^*A$ is defined by \eqref{eqaug31}.
\end{enumerate}

Note that in \cite{BGGK1}, the word ``essentially" was omitted from Properties~1, 3, and 4, while in \cite{BGGK2}, Property~4 was not utilized.

We shall denote by $\cJ(\cE)$ the family of monotonic and measure-preserving maps from $\cE$ to $\cL^n$ that map balls to balls.  If $H$ is a hyperplane in $\R^n$, then $\cJ_H(\cE)\subset \cJ(\cE)$ is the subfamily of maps that respect $H$-cylinders.

Information concerning relations between the first three properties listed above and others besides may be found in \cite[Sections~3 and~6]{BGGK1}. The term ``smoothing" was employed by Sarvas \cite[p.~11]{Sar72}, although his definition differs slightly from ours.

In the definition of smoothing, one can equivalently require a pointwise inclusion in \eqref{eq:smothDef}; see \cite[p.~12]{BGGK2}.

Let $X\subset {\cV}(\R^n)$ and let $\cE\subset \cL^n$ be the class of sets with characteristic functions in $X$. For a map $T:X\to  {\cV}(\R^n)$, the induced set map $\di_T:\cE\to \cL^n$ given by
\begin{equation}\label{eqIndic}
\di_T A=\{x: T1_A(x)=1\},
\end{equation}
$A\in \cE$, is well defined since $T1_A\in {\cV}(\R^n)$ and hence  $\cH^n(\di_T A)\le \cH^n(\{x: T1_A(x)>1/2\})<\infty$.

If $X\subset {\cV}(\R^n)$ is as above, we consider the following properties of a map $T:X\to {\cV}(\R^n)$, where the first four properties are assumed to hold for all $f,g\in X$:

\begin{enumerate}
		\item[1.] ({\em Equimeasurable})\quad
		$
		{\mathcal{H}}^{n}(\{x: Tf(x)>t\})={\mathcal{H}}^{n}(\{x: f(x)>t\})
		$
		for $t\in \R$.
		\item[2.] ({\em Monotonic}) \quad $f\le g$, essentially, implies $Tf\le Tg$, essentially.
		\item[3.]  ({\em $L^p$-contracting}) \quad $\|Tf-Tg\|_p\le \|f-g\|_p$ when $f-g\in L^p(\R^n)$.
		\item[4.]   ({\em Modulus of continuity reducing})  If $d>0$, we define the {\em modulus of continuity} of $f\in X$ by
$$
\omega_{d}(f)=\esssup_{\|x-y\|\le d}|f(x)-f(y)|
$$
and say that $T$ {\em reduces the modulus of continuity} if $\omega_{d}(Tf)\le \omega_{d}(f)$ for all $d>0$ and $f\in X$.
		\item[5.] ({\em Smoothing}) \quad We say that $T$ is {\em smoothing} if $\cE=\mathcal L^n$ and the induced map $\di_T$ is smoothing, i.e.,
$$
(\di_T^*A)+d B^n\subset \di_T^*(A+d B^n)=\di_T(A+d B^n),
$$
essentially, for each $d>0$ and $A\in \mathcal B^n$.
\end{enumerate}

The map $T$ is called a {\em rearrangement} if it is equimeasurable and monotonic.

Our approach to set maps and rearrangements of functions, including our definition of the term ``smoothing," differs from that in the groundbreaking works of Brock and Solynin \cite{BS}, Van Schaftingen \cite{VSPhD}, and Van Schaftingen and Willhem \cite{VSW}.  A comparison of these differences in \cite[Appendix]{BGGK1} and \cite[Section~3]{BGGK2} shows that ultimately these approaches are equivalent.

Each rearrangement $T:\cV(\R^n)\to\cV(\R^n)$ is $L^p$-contracting for  $1\le p<\infty$; see \cite[Proposition~3.7 and Appendix~A]{BGGK2} for a proof in our setting. For the convenience of the reader, we now state, in a slightly different form, two results proved in \cite{BGGK1}.

\begin{prop}\label{fromSetToFct}
Let $X\subset {\cV}(\R^n)$, let $\cE=\{A\in {\cL}^n: 1_A\in X\}$, and let $T:X\to  {\cV}(\R^n)$ be equimeasurable. The induced map $\di_T:\cE\to \cL^n$ given by \eqref{eqIndic} is measure preserving and satisfies
\begin{equation}\label{eqoct72}
T1_A=1_{\di_TA},
\end{equation}
essentially, for all $A\in \cE$.
\end{prop}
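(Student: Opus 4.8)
The plan is to show that the induced set map $\di_T$ is measure preserving and satisfies $T1_A=1_{\di_TA}$ essentially, using only that $T$ is equimeasurable and that $T$ maps $X$ into $\cV(\R^n)$.

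First I would observe that, since $T$ is equimeasurable and $1_A\in\cV(\R^n)$ with superlevel sets $\{x:1_A(x)>t\}$ equal to $\R^n$ for $t<0$, to $A$ for $0\le t<1$, and to $\emptyset$ for $t\ge 1$, we get $\cH^n(\{x:T1_A(x)>t\})=\cH^n(A)$ for all $0\le t<1$ and $\cH^n(\{x:T1_A(x)>t\})=0$ for all $t\ge 1$. In particular $T1_A\le 1$ essentially (the set where $T1_A>1$, hence where $T1_A>1+1/k$ for some $k$, is null), and $T1_A\ge 0$ essentially since $T1_A\in\cV(\R^n)\subset\cM_+(\R^n)$. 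The key step is then to show that $T1_A$ takes only the values $0$ and $1$, essentially. For this, fix $0<t<1$ and consider $\cH^n(\{x:0<T1_A(x)\le t\})$; by the equimeasurability identity this equals $\cH^n(\{x:T1_A(x)>0\})-\cH^n(\{x:T1_A(x)>t\})$. The first term is $\lim_{s\to 0^+}\cH^n(\{x:T1_A(x)>s\})=\cH^n(A)$ by continuity of measure along the increasing family of sets as $s\downarrow 0$ (using finiteness of $\cH^n(\{x:T1_A(x)>0\})$, which holds because $T1_A\in\cV(\R^n)$), while the second term is also $\cH^n(A)$ since $0<t<1$. Hence the set of points where $T1_A$ lies strictly between $0$ and $t$ is null for every $t\in(0,1)$; letting $t\uparrow 1$ along a sequence shows $\{x:0<T1_A(x)<1\}$ is null, so $T1_A\in\{0,1\}$ essentially.

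Once $T1_A$ is essentially $\{0,1\}$-valued, it is essentially the characteristic function of the set $\{x:T1_A(x)=1\}=\di_TA$, which is precisely the identity \eqref{eqoct72}. Finally, $\cH^n(\di_TA)=\cH^n(\{x:T1_A(x)=1\})=\cH^n(\{x:T1_A(x)>1/2\})$ (the two sets differ by the null set $\{x:1/2<T1_A(x)<1\}$), and by equimeasurability with $t=1/2\in[0,1)$ this equals $\cH^n(\{x:1_A(x)>1/2\})=\cH^n(A)$; thus $\di_T$ is measure preserving. This also confirms $\di_TA\in\cL^n$, as already noted after \eqref{eqIndic}.

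The main obstacle is the reasoning that forces $T1_A$ to be essentially two-valued: one must be careful that equimeasurability is only assumed for characteristic functions' images via the superlevel-set identity, and that the continuity-of-measure argument at $t=0$ genuinely applies, which requires the finiteness coming from $T1_A\in\cV(\R^n)$. Everything else — nonnegativity, the bound $T1_A\le 1$, and the measure-preserving conclusion — is routine bookkeeping with superlevel sets and null sets.
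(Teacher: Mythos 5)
Your proof is correct and self-contained. The paper does not reproduce an argument for this proposition---it cites \cite[Lemma~4.5]{BGGK1} and notes that the proof there extends to this slightly more general setting---so there is no internal proof to compare against, but the mechanism you use (equimeasurability forces $T1_A$ to be essentially $\{0,1\}$-valued, after which \eqref{eqoct72} and measure preservation are bookkeeping) is the natural one and is surely what the cited lemma does.

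One small imprecision worth flagging. In the parenthetical justifying $\cH^n(\{T1_A>0\})=\lim_{s\to0^+}\cH^n(\{T1_A>s\})$, you write that this ``uses finiteness of $\cH^n(\{T1_A>0\})$, which holds because $T1_A\in\cV(\R^n)$.'' Neither half of that is quite right: membership in $\cV(\R^n)$ only guarantees $\cH^n(\{T1_A>t\})<\infty$ for $t>0$, not $t=0$; and continuity of $\cH^n$ from below for the increasing family $\{T1_A>s\}$, $s\downarrow 0$, requires no finiteness at all. The finiteness of $\cH^n(\{T1_A>0\})$ is in fact a \emph{consequence} of that limit being $\cH^n(A)<\infty$, not a hypothesis for it. What you do genuinely need finite, for the subtraction $\cH^n(\{0<T1_A\le t\})=\cH^n(\{T1_A>0\})-\cH^n(\{T1_A>t\})$, is $\cH^n(\{T1_A>t\})$, and that is supplied by equimeasurability together with $A\in\cL^n$. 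With the parenthetical removed or corrected, the argument is clean and complete.
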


\begin{proof}
The result is stated for $T:\cV(\R^n)\to \cV(\R^n)$ in \cite[Lemma~4.5]{BGGK1}, but its proof extends to the slightly more general setting here.
\end{proof}

The following proposition is a special case of \cite[Lemma~4.8]{BGGK1}.

\begin{prop}\label{lemapril30}
Let $T:{\cV}(\R^n)\to{\cV}(\R^n)$ be a rearrangement.

\noindent{\rm{(i)}}  The map  $\di_T:\cL^n\to \cL^n$ defined by \eqref{eqIndic} is monotonic.

\noindent{\rm{(ii)}} If $f\in {\cV}(\R^n)$, then
$$
\{x: Tf(x)\ge t\}=\di_T\{x: f(x)\ge t\}\quad{\text{and}}\quad\{x: Tf(x)> t\}=\di_T\{x: f(x)> t\},
$$
essentially, for $t>0$.  Moreover, $T$ is essentially determined by $\di_T$, since
\begin{equation}\label{eqoct62}
Tf(x)=\max\left\{\sup\{t\in \Q,\, t>\essinf f: x\in \di_T \{z: f(z)\ge t\}\},\essinf f\right\},
\end{equation}
essentially.
\end{prop}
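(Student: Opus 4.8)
The plan is to base everything on Proposition~\ref{fromSetToFct} together with a \emph{scalar lemma} asserting that $T(c\,1_A)=c\,1_{\di_T A}$ essentially for every $A\in\cL^n$ and every $c>0$. Granting this lemma, part~(i) is immediate: if $A\subset B$ essentially, then $1_A\le 1_B$ essentially, so monotonicity of $T$ gives $T1_A\le T1_B$ essentially; since $T1_A=1_{\di_T A}$ and $T1_B=1_{\di_T B}$ essentially by Proposition~\ref{fromSetToFct}, we obtain $1_{\di_T A}\le 1_{\di_T B}$ essentially, i.e., $\di_T A\subset\di_T B$ essentially.

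For the scalar lemma, fix $A\in\cL^n$ and $c>0$. The superlevel set $\{c\,1_A>s\}$ equals $A$ for $0\le s<c$ and is empty for $s\ge c$, so equimeasurability of $T$ forces $T(c\,1_A)$ to take values in $\{0,c\}$ essentially, with $\cH^n(\{T(c\,1_A)=c\})=\cH^n(\{T(c\,1_A)>0\})=\cH^n(A)$; here one uses continuity of $\cH^n$ from above and from below, which is legitimate since $\cH^n(A)<\infty$. Hence $T(c\,1_A)=c\,1_C$ essentially with $C:=\{T(c\,1_A)=c\}$ and $\cH^n(C)=\cH^n(A)$. Comparing $c\,1_A$ with $1_A$ through monotonicity of $T$ (using $1_A\le c\,1_A$ when $c\ge 1$ and $c\,1_A\le 1_A$ when $c\le 1$) and Proposition~\ref{fromSetToFct} produces an inclusion between $C$ and $\di_T A$; since these two sets have equal finite measure, $C=\di_T A$ essentially, which proves the lemma.

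Now fix $t>0$ and set $A=\{x:f(x)\ge t\}\in\cL^n$, a set of finite measure since $A\subset\{f>t/2\}$. From $t\,1_A\le f$ and monotonicity, $t\,1_{\di_T A}=T(t\,1_A)\le Tf$ essentially, so $\di_T\{f\ge t\}\subset\{Tf\ge t\}$ essentially. For the reverse inclusion, equimeasurability and continuity of $\cH^n$ from above give $\cH^n(\{Tf\ge t\})=\lim_{s\uparrow t}\cH^n(\{f>s\})=\cH^n(\{f\ge t\})=\cH^n(\di_T\{f\ge t\})$, so a comparison of finite measures forces $\{Tf\ge t\}=\di_T\{f\ge t\}$ essentially. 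The strict-inequality identity follows by writing $\{Tf>t\}=\bigcup_{k\ge1}\{Tf\ge t+1/k\}$ and $\{f>t\}=\bigcup_{k\ge1}\{f\ge t+1/k\}$, applying the $\ge$ identity at each level $t+1/k$, and using monotonicity and measure preservation of $\di_T$ together with continuity of $\cH^n$ from below to see that $\di_T\{f>t\}=\bigcup_k\di_T\{f\ge t+1/k\}$ essentially. Finally, since $f\in\cV(\R^n)$ forces $\essinf f=0$, formula~\eqref{eqoct62} reduces to the claim that $\widetilde f(x):=\sup\{t\in\Q,\,t>0:x\in\di_T\{z:f(z)\ge t\}\}$ (with $\sup\emptyset=0$) equals $Tf$ essentially; but $\{\widetilde f>r\}=\bigcup_{t\in\Q,\,t>r}\di_T\{f\ge t\}$, which by the identities just established coincides essentially with $\bigcup_{t\in\Q,\,t>r}\{Tf\ge t\}=\{Tf>r\}$ for every $r\ge 0$, and trivially for $r<0$ (both being $\R^n$); since two measurable functions with essentially equal superlevel sets at every rational threshold agree essentially, $\widetilde f=Tf$ essentially, and the outer $\max$ with $\essinf f=0$ in \eqref{eqoct62} merely records the convention $\sup\emptyset=0$.

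I expect the main obstacle to be organization rather than depth: one must keep careful track of the distinction between the level sets $\{\cdot\ge t\}$ and $\{\cdot>t\}$, the possible infiniteness of $\{f>0\}$, and the repeated passages to limits of measures from above and below. The scalar lemma is the only genuinely new step and is short; alternatively, as this proposition is a special case of \cite[Lemma~4.8]{BGGK1}, one could specialize that argument instead.
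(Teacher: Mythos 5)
Your proof is correct, and it is worth noting that the paper itself supplies no argument here: it simply invokes \cite[Lemma~4.8]{BGGK1} as a special case, as you yourself observe at the end. So your self-contained proof is a genuine alternative rather than a reconstruction of the paper's reasoning.

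A few remarks on the argument itself. The ``scalar lemma'' $T(c\,1_A)=c\,1_{\di_T A}$ is the right device, and your derivation that $T(c\,1_A)$ takes only the values $0$ and $c$ essentially is sound: for $0<s<c$, the superlevel sets $\{T(c\,1_A)>s\}$ all have the same finite measure $\cH^n(A)$ and are nested, hence essentially coincide, forcing $\{0<T(c\,1_A)<c\}$ to be null, while $\{T(c\,1_A)>c\}$ is null by equimeasurability and $T(c\,1_A)\ge 0$ a.e.\ because $\cV(\R^n)$ consists of nonnegative functions. The one-sided monotonicity comparison with $1_A$ then pins down the support as $\di_T A$, by a finite-measure equality forcing inclusion into equality. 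In part (ii), the lower bound $\di_T\{f\ge t\}\subset\{Tf\ge t\}$ via $t\,1_{\{f\ge t\}}\le f$ together with the measure comparison obtained from continuity of $\cH^n$ from above (legitimate because $\cH^n(\{f>t/2\})<\infty$, so the relevant decreasing family starts at a finite-measure set) gives the $\ge$-level identity cleanly; the $>$-level identity then follows from the $\ge$-identities at $t+1/k$ together with continuity of $\cH^n$ from below and measure preservation of $\di_T$, exactly as you argue. Noting $\essinf f=0$ for every $f\in\cV(\R^n)$ is correct (otherwise some positive superlevel set would be all of $\R^n$), and the final step — comparing superlevel sets of $\widetilde f$ and $Tf$ at all rational thresholds and concluding essential equality via a countable union of null exceptional sets — is standard and complete. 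The only cosmetic caveat is that one should be explicit that $Tf\ge 0$ a.e.\ (from $Tf\in\cV(\R^n)$) to justify that $\{Tf>r\}=\R^n$ for $r<0$ and that the outer $\max$ with $\essinf f=0$ in \eqref{eqoct62} is just the $\sup\emptyset=0$ convention; you do address both points. No gaps.
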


The following proposition addresses the extendability of a set map defined on the compact sets.  We do not require the outer regularity assumed in earlier similar extensions, such as that of Brock and Solynin \cite[Section~3]{BS}.  In contrast, \cite[Example~6.8]{BGGK1} shows that the polarization set map $\di_{P_H}$ has more than one essentially different extension to from ${\cK}^n_n$ to ${\cL}^n$.

\begin{prop}
Let $\di:{\mathcal{C}}^n \to \cL^n$ be monotonic and measure preserving. Then there is an essentially unique monotonic and measure-preserving map $\overline{\di}: \cL^n\to\cL^n$ such that $\overline{\di}=\di$ on ${\mathcal{C}}^n $.
\end{prop}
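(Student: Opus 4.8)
The plan is to construct $\overline{\di}$ by inner approximation with compact sets. For $A\in\cL^n$, inner regularity of Lebesgue measure yields compact $C_k\subset A$ with $\cH^n(C_k)\to\cH^n(A)$; replacing $C_k$ by $C_1\cup\cdots\cup C_k$ we may take $(C_k)$ increasing, and then monotonicity and measure preservation of $\di$ make $(\di C_k)$ an increasing sequence in $\cL^n$ with $\cH^n\bigl(\bigcup_k\di C_k\bigr)=\lim_k\cH^n(\di C_k)=\lim_k\cH^n(C_k)=\cH^n(A)<\infty$. I would set $\overline{\di}A=\bigcup_k\di C_k$. The crux is to show that this is essentially independent of the chosen sequence and of the choice of representative of $A$ modulo $\cH^n$-null sets.

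So let $A,A'\in\cL^n$ with $A=A'$ essentially, and let $(C_k)$, $(D_j)$ be increasing approximating sequences for $A$ and $A'$, respectively. Fix $k$. The compact sets $C_k\cup D_j$ increase with $j$; since $C_k\subset A'$ essentially and $\bigcup_j D_j=A'$ essentially, they increase essentially to $A'$, so $\cH^n(C_k\cup D_j)\to\cH^n(A')=\cH^n(A)$. From $D_j\subset C_k\cup D_j$ and monotonicity, $\di D_j\subset\di(C_k\cup D_j)$ essentially, so measure preservation gives $\cH^n\bigl(\di(C_k\cup D_j)\setminus\di D_j\bigr)=\cH^n(C_k\cup D_j)-\cH^n(D_j)\to0$. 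Since $\di C_k\subset\di(C_k\cup D_j)$ essentially as well, the sets $\di C_k\setminus\di D_j$ decrease in $j$ with measure tending to $0$, whence $\di C_k\subset\bigcup_j\di D_j$ essentially. Taking the union over $k$ and interchanging the roles of the two sequences yields $\bigcup_k\di C_k=\bigcup_j\di D_j$ essentially, so $\overline{\di}\colon\cL^n\to\cL^n$ is well defined.

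The remaining properties follow quickly. If $A$ is compact, the constant sequence $C_k=A$ is admissible, so $\overline{\di}A=\di A$ essentially and $\overline{\di}$ extends $\di$; measure preservation was recorded above. For monotonicity, let $A\subset B$ essentially, with increasing approximating sequences $(C_k)$ for $A$ and $(D_j)$ for $B$; then $C_k\subset B$ essentially, and the argument of the previous paragraph (now $C_k\cup D_j$ increasing essentially to $B$, with $\cH^n(C_k\cup D_j)\to\cH^n(B)$) gives $\di C_k\subset\bigcup_j\di D_j=\overline{\di}B$ essentially for each $k$, so $\overline{\di}A\subset\overline{\di}B$ essentially. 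For essential uniqueness, if $\widetilde{\di}\colon\cL^n\to\cL^n$ is monotonic, measure preserving, and agrees with $\di$ on $\mathcal{C}^n$, and $(C_k)$ approximates $A$ as above, then $\di C_k=\widetilde{\di}C_k\subset\widetilde{\di}A$ essentially, so $\overline{\di}A=\bigcup_k\di C_k\subset\widetilde{\di}A$ essentially; since $\cH^n(\overline{\di}A)=\cH^n(A)=\cH^n(\widetilde{\di}A)$, the two sets coincide essentially.

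The step I expect to cause the most trouble is the well-definedness in the second paragraph: because $\di$ is defined only on $\mathcal{C}^n$ and its monotonicity holds merely up to null sets, the two approximating sequences cannot be compared by a plain set inclusion, and one must route the comparison through the auxiliary compact sets $C_k\cup D_j$ and carry it out at the level of $\cH^n$-measures, while tracking the set-dependent exceptional null sets. Everything else is routine continuity-of-measure bookkeeping.
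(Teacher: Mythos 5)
Your proof is correct and follows essentially the same approach as the paper: you define $\overline{\di}A=\bigcup_k\di C_k$ via an increasing compact approximating sequence and verify well-definedness by routing two sequences through auxiliary compact sets, exactly as the paper does (the paper uses $C_k\cap C_k'$ while you use $C_k\cup D_j$, a cosmetic difference). The paper leaves monotonicity as a remark and does not explicitly spell out essential uniqueness, whereas you do; otherwise the arguments coincide.
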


\begin{proof}
Let $A\in \cL^n$.  Since $\cH^n$ is inner regular, there is an increasing sequence $(C_k)$ of sets in ${\mathcal{C}}^n$ such that $\cH^n(A\setminus C_k)\to 0$ as $k\to\infty$. Define $\overline{\di} A=\cup_k \di C_k$.  The continuity of $\cH^n$ from below yields $\cH^n\left(\overline{\di}A\right)=\lim_k \cH^n(C_k)=\cH^n(A)$, so $\overline{\di}$ preserves measure and in particular $\overline{\di} A\in \cL^n$.

We claim that the definition of $\overline{\di}$ is essentially independent of the sequence $(C_k)$.  To see this, let $(C_k\!')$ be another sequence with the same properties, and let $\overline{\di}\,' A=\cup_k \di C_k\!'$. Let $D_k=C_k\cap C_k\!'$.  Then
$$\cH^n(A)-\cH^n(D_k)=\cH^n(A\setminus D_k)\le \cH^n(A\setminus C_k)+\cH^n(A\setminus C_k\!')\to 0$$
as $k\to \infty$. The monotonicity of $\di$ implies that $\di D_k\subset \di C_k$, essentially, so using the measure-preserving property of $\di$, we obtain
\[
\cH^n(\di C_k\setminus \di D_k)=\cH^n(\di C_k)-\cH^n(\di D_k)=\cH^n(C_k)-\cH^n(D_k)\to \cH^n(A)-\cH^n(A)=0,
\]
as $k\to\infty$.  Moreover,
$$\cH^n\left(\overline{\di} A\setminus\di C_k\right)=\cH^n\left(\overline{\di} A\right)-\cH^n(\di C_k)\to 0$$
as $k\to\infty$.  Therefore
\begin{align*}
\cH^n\left(\overline{\di}A\setminus \overline{\di}\,'A\right)&\le \cH^n\left(\overline{\di} A\setminus \di D_k\right)\le \cH^n\left(\overline{\di} A\setminus \di C_k\right)+\cH^n(\di C_k\setminus \di D_k)\to 0
\end{align*}
as $k\to \infty$.  Thus $\cH^n\left(\overline{\di}A\setminus \overline{\di}\,'A\right)=0$, and similarly $\cH^n\left(\overline{\di}\,'A\setminus \overline{\di}A\right)=0$, proving the claim.

It is easy to see that $\overline{\di}$ is monotonic.
\end{proof}

We end this section with some examples of smoothing set maps and rearrangements.

\begin{ex}\label{ex429}
{\em Let $H$ be a hyperplane in $\R^n$.  Each of the following set maps can be defined as a monotonic, measure preserving, and smoothing map from $\cL^n$ to $\cL^n$ and gives rise to a smoothing rearrangement from $\cV(\R^n)$ to $\cV(\R^n)$.

(i) The {\em reflection} of a set $A\subset\R^n$ in $H$ is $R_HA=A^{\dagger}=\{x^{\dagger}: x\in A\}$, where $x^{\dagger}=2(x|H)-x$ the reflection of $x$ in $H$.  The reflection $R_Hf=f^{\dagger}$ in $H$ of a function $f$ on $\R^n$ is defined by $R_Hf(x)=f^{\dagger}(x)=f(x^{\dagger})$.

(ii) When $H$ is oriented, polarization of sets and functions was defined above; see \eqref{pol2} and \eqref{pol}.  For background and references, see \cite[p.~3]{BGGK1}. Proofs of the smoothing property (recall that this is equivalent to reducing the modulus of continuity of functions) are given in \cite[Theorem~1.37]{Baer} and \cite[Lemma~5.1]{BS}.

(iii) If $A\in {\mathcal{L}}^n$, the {\em Steiner symmetral} of $A$ with respect to $H$ is the set $S_HA$ such that for each line $G$ orthogonal to $H$ and meeting $A$, the set $G\cap S_HA$ is a (possibly degenerate) closed line segment with midpoint in $H$ and length ${\mathcal{H}}^1(G\cap A)$, if ${\mathcal{H}}^1(G\cap A)<\infty$, and $G\cap S_HA=\emptyset$ if ${\mathcal{H}}^1(G\cap A)=\infty$ or if $G\cap A$ is not ${\mathcal{H}}^1$-measurable.  If instead $H$ is an $(n-k)$-dimensional subspace in $\R^n$, the {\em Schwarz} or {\em $(k,n)$-Steiner symmetral} of $A$ with respect to $H$ is obtained in the same way, where $G$ is now a $k$-dimensional plane orthogonal to $H$, $G\cap S_HA$ is a (possibly degenerate) $k$-dimensional ball with center in $H$, and ${\mathcal{H}}^1$ is replaced by ${\mathcal{H}}^k$. For the definitions of the corresponding rearrangements of functions in ${\cV}(\R^n)$, we refer to \cite[Definition~6.2]{Baer}.  The smoothing property is proved in \cite[Theorem~6.10]{Baer} and \cite[Corollary~6.1]{BS}.

(iv) If $H$ is a hyperplane in $\R^n$, what we call {\em Solynin} set maps $\di_{So_H}$ and rearrangements $So_H$ are
far-reaching extensions of a process discovered by McNabb \cite{McN}, in which a convex body is transformed continuously into its Steiner symmetral.  In this special case, the action of $\di_{So_H}$ on a convex body $K\subset\R^n$ can be visualized as follows.  If $H$ is horizontal and oriented with the $x_n$-axis, then any chord of $K$ orthogonal to $H$ is left unchanged if its midpoint lies above or on $H$, while if it lies below $H$, it is translated orthogonal to $H$ so that its midpoint belongs to $H$.  If $H$ is imagined to be moving up vertically through $K$, then $\di_{So_H} K$ changes continuously from $K$ itself to the Steiner symmetral $S_HK$.  The first extension to non-convex sets was carried out by Solynin \cite{Sol90}, who called it {\em continuous symmetrization}.  It achieved its full generalization in Solynin's paper \cite{Sol1}, where it is analysed in detail and applied to integral inequalities.  Solynin set maps and rearrangements with respect to $H$ act fiber-wise on lines orthogonal to $H$, so it suffices to define them when $n=1$.  In this case, if $H=\{t\}$, with the usual orientation of $\R$, and $A\in {\cL}^1$, then
\begin{equation}\label{Soldef}
\di_{So_{\{t\}} A}=[t-r_A,t+r_A]\cup (A\cap[t,\infty)),
\end{equation}
where $r_A\geq0$ is such that ${\cH}^1(So_{\{t\}} A)={\cH}^1(A)$.  Note that when $A\subset (t,\infty)$, so that $r_A=0$, we  have $\di_{So_{\{t\}} A}=\{t\}\cup A\neq A$, but in this case $\di_{So_{\{t\}} A}=A$, essentially.  Moreover, \eqref{Soldef} essentially agrees with the description above when $A$ is a convex body.  Up to translation, the set $A$ tranforms continuously into its Steiner symmetral $S_{\{o\}}A$ with respect to $\{o\}$ as $t$ goes from $-\infty$ to $\infty$. Compare \cite[(4.5) and (4.6)]{Sol1}, which lead to the definitions
\cite[Definitions~4.3 and~4.4]{Sol1}. The smoothing property is proved in \cite[Lemma~4.4]{Sol1}. An extension analogous to $(k,n)$-Steiner symmetrization is studied in \cite[Section~9]{Sol1}.

(v) For our purposes, {\em Brock} set maps and rearrangements of functions depend on a parameter $0\le b\le 1$.  For convex bodies, the set maps were introduced by P\'olya and Szeg\H{o} \cite[Note~B]{PS} (in collaboration with M.~Shiffman), who used the term {\em continuous symmetrization} also adopted by Solynin.  In this special case, if $H$ is a horizontal hyperplane at height $t_0\in \R$ and $K\subset \R^n$ is a convex body, its image under the Brock set map $\di_{B_H}$ with parameter $b$ is obtained by translating orthogonal to $H$ each vertical chord of $K$ with midpoint at height $t$ so that its midpoint is at height $(1-b)t_0+bt$.  Thus $b=1$ corresponds to the identity map and $b=0$ to Steiner symmetrization with respect to $H$.  The extension of $\di_{B_H}$ to a map on $\cL^n$, leading to the definition of the Brock rearrangement $B_H$ on functions in ${\cV}(\R^n)$, requires considerable ingenuity.  It is described and fully analysed by Brock \cite[Sections~2 and~3]{Bro1} and \cite[Section~2]{Bro2} (where the parametrization is different), and the smoothing property is proved in \cite[Remark~2.3]{Bro2}.  The extension itself was anticipated by Rogers \cite{Rog57}, who used it in his remarkable proof of the Brascamp-Lieb-Rogers inequality.  The crucial step is to define $\di_{B_H}A$ when $n=1$ and $A$ is a finite union of line segments; see Step 1 in \cite[pp.~165--166]{Bro2} and \cite[pp.~106--107]{Rog57}.
\qed}
\end{ex}

\section{Contractions associated with set maps}\label{Contractions}

The following lemma was proved in \cite[Lemma~6.2]{BGGK1} when $t_0=0$ and ${\mathcal{K}}^n_n\subset {\mathcal{E}}$.  We shall need the slight generalization presented here in the proof of Theorem~\ref{thm:not_weaklyfiniteBalls}.

\begin{lem}\label{lemm5}
Let $H=u^{\perp}+t_0 u$, $u\in S^{n-1}$, $t_0\in \R$, let ${\mathcal{E}}\subset {\mathcal{L}}^n$ be a class that contains all balls in $\R^n$, and suppose that $\di:{\mathcal{E}}\rightarrow {\mathcal{L}}^n$ is monotonic, respects $H$-cylinders, and maps balls to balls.  Then there is a contraction (i.e., a Lipschitz function with Lipschitz constant $1$) $\varphi_{\di}:\R\to\R$ such that
\begin{equation}\label{ball}
\di B(x+tu,r)=B\left(x+\varphi_{\di}(t)u,r\right),
\end{equation}
essentially, for $x\in u^\perp$, $t\in \R$, and $r>0$.
\end{lem}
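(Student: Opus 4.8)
The plan is to recover the scalar $\varphi_\di(t)$ from the geometry of $\di B(x+tu,r)$ in three stages, following the argument for \cite[Lemma~6.2]{BGGK1}: first show that the image ball has radius exactly $r$ with centre on the line $x+\R u$; then show that its displacement along $u$ is independent of $x$ and $r$; and finally check the Lipschitz bound. For Stage~1, note that since $\di$ maps balls to balls, for each $x\in u^{\perp}$, $t\in\R$, $r>0$ we may write $\di B(x+tu,r)=B(c(x,t,r),\rho(x,t,r))$, essentially, with $c$ and $\rho$ essentially unique (two balls agreeing off a null set coincide, as do two balls one of which essentially contains the other). Because $\di$ respects $H$-cylinders and $B(x+tu,r)$ lies in the infinite cylinder $(B(x+t_0u,r)\cap H)+H^{\perp}$ of radius $r$ about the line $x+\R u$ (here using $n\ge 2$ so this cylinder is honestly $r$-thin), the image lies in the same cylinder; this forces $\rho(x,t,r)\le r$ and $\|c(x,t,r)|u^{\perp}-x\|+\rho(x,t,r)\le r$.

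Stage~2 is the heart of the matter: $\rho(x,t,r)=r$. Fix $x,t,r$, abbreviate $B(c,\rho)=\di B(x+tu,r)$, and fix any $r_0\in(0,r)$. For every $z\in u^{\perp}$ with $\|z\|\le r-r_0$ we have $B(x+z+tu,r_0)\subset B(x+tu,r)$, so monotonicity gives $\di B(x+z+tu,r_0)\subset B(c,\rho)$, essentially, hence genuinely; writing $\di B(x+z+tu,r_0)=B(c_z,\rho_z)$ this yields $\|c_z-c\|\le\rho$, so $\|c_z|u^{\perp}-c|u^{\perp}\|\le\rho$. Applying the cylinder estimate of Stage~1 to $B(x+z+tu,r_0)$ gives $\|c_z|u^{\perp}-(x+z)\|\le r_0$, and the triangle inequality then shows that every point $x+z$ of the $(n-1)$-dimensional disc $\{x+z:z\in u^{\perp},\ \|z\|\le r-r_0\}$ lies within $r_0+\rho$ of $c|u^{\perp}$. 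Choosing $z$ of norm $r-r_0$ in the direction $x-c|u^{\perp}$ (any direction if these coincide) gives $r-r_0\le r_0+\rho$, i.e.\ $\rho\ge r-2r_0$; letting $r_0\to0^{+}$ gives $\rho\ge r$, hence $\rho(x,t,r)=r$. Feeding this back into the cylinder estimate forces $c(x,t,r)|u^{\perp}=x$, so $\di B(x+tu,r)=B(x+\varphi(x,t,r)u,r)$, essentially, with $\varphi(x,t,r):=c(x,t,r)\cdot u$.

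For Stage~3, take $x,x'\in u^{\perp}$; the internally tangent inclusion $B(x+tu,r)\subset B(x'+tu,r+\|x-x'\|)$ together with monotonicity and Stage~2 gives $B(x+\varphi(x,t,r)u,r)\subset B(x'+\varphi(x',t,r+\|x-x'\|)u,r+\|x-x'\|)$, whose defining inequality reads $\sqrt{\|x-x'\|^{2}+(\varphi(x,t,r)-\varphi(x',t,r+\|x-x'\|))^{2}}\le\|x-x'\|$; hence $\varphi(x,t,r)=\varphi(x',t,r+\|x-x'\|)$ for all such $x,x',r$. Applying this twice (once with $x,x'$ swapped) shows $\varphi(x',t,r)=\varphi(x',t,r+2\|x-x'\|)$, and since $\|x-x'\|$ may be any nonnegative number ($n\ge 2$) we conclude that $\varphi(x',t,\cdot)$ is constant on $(0,\infty)$; the displayed identity then shows $\varphi$ is also independent of $x$. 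Writing $\varphi_{\di}(t)$ for this common value and comparing $B(x+tu,r)$ with $B(x+t'u,r+|t-t'|)$ in the same way yields $|\varphi_{\di}(t)-\varphi_{\di}(t')|\le|t-t'|$, so $\varphi_{\di}$ is a contraction, and \eqref{ball} holds.

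The main obstacle is Stage~2. The cylinder property and monotonicity each only supply \emph{upper} bounds on $\rho(x,t,r)$, so the work is to extract a matching lower bound: one fills $B(x+tu,r)$ with the radius-$r_0$ balls $B(x+z+tu,r_0)$ whose centres sweep out an $(n-1)$-disc of radius $r-r_0$, and exploits that the images of all these balls must simultaneously lie in the single image ball $B(c,\rho)$ while each staying within $r_0$ (in the $u^{\perp}$ direction) of its own centre; this pins $\rho$ from below by $r-2r_0$. Once this is in hand, Stages~1 and~3 are routine, consisting only of tangency comparisons of balls and their images.
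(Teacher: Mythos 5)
Your argument is correct: Stage 1 extracts the upper bounds from the cylinder property, Stage 2 produces the matching lower bound $\rho\ge r$ by sweeping small balls across an $(n-1)$-disc without invoking measure preservation (which the lemma indeed does not assume), and Stage 3's tangency comparisons correctly eliminate the $x$- and $r$-dependence and give the Lipschitz bound. This is essentially the same route as the paper, which simply cites the proof of \cite[Lemma~6.2]{BGGK1} and notes that the only changes needed are to allow $t_0\ne 0$ and to replace a convex body by a ball in one spot — exactly the ball-only version you wrote out.
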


\begin{proof}
It is a simple matter to modify the proof of \cite[Lemma~6.2]{BGGK1} so that it applies when $t_0\in \R$.  Apart from this, the only adjustment required is in the penultimate sentence, where $K$ can be replaced by the ball $B_1$.
\end{proof}

The following result generalizes and extends Lemma~\ref{lemm5} to maps not necessarily respecting $H$-cylinders.

\begin{lem}\label{lem11}
Let ${\mathcal{E}}\subset {\mathcal{L}}^n$ be a class that contains all balls in $\R^n$, and let $\di:\cE\to\cL^n$ be monotonic, measure preserving, and map balls to balls.

\noindent{\rm{(i)}} There are contractions $\psi_{\di,r}:\R^n\to \R^n$ such that
\begin{align}\label{eq:psi1}
\di B(x,r)=B(\psi_{\di,r}(x),r)
\end{align}
for all $x\in \R^n$ and $r>0$. We have
\begin{equation}\label{eq:pseudoContract}
\|\psi_{\di,r}(x)-\psi_{\di,r'}(x')\|\le \max\{\|x-x'\|,|r-r'|\}.
\end{equation}
for $x,x'\in \R^n$ and $r,r'>0$.

\noindent{\rm{(ii)}} Let $H=u^\perp+t_0u$, $u\in S^{n-1}$, $t_0\in\R$, be a hyperplane. If, in addition, $\di$ respects
$H$-cylinders, then $\psi_{\di,r}$ is independent of $r>0$, and
\begin{equation}\label{eq:omregn}
\psi_{\di,r}(x)=x|u^\perp+\varphi_{\di}(\langle x,u\rangle)u
\quad{\text{and}}~~\quad
\varphi_\di(t)=\langle \psi_\di(tu),u\rangle
\end{equation}
for $x\in \R^n$ and $t\in\R$, where $\varphi_{\di}:\R\to\R$ is the contraction associated with $\di$ from \eqref{ball}.
\end{lem}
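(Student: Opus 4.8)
The strategy is to first make $\psi_{\di,r}$ well defined, then prove the estimate \eqref{eq:pseudoContract} directly (the statement that each $\psi_{\di,r}$ is a contraction is just the case $r=r'$), and finally obtain (ii) by matching the formula from (i) against Lemma~\ref{lemm5}.

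\emph{Well-definedness and a remark on ``essentially''.} Fix $x\in\R^n$ and $r>0$. Since $\di$ maps balls to balls, $\di B(x,r)$ is essentially a closed ball; since $\di$ is measure preserving, $\cH^n(\di B(x,r))=\kappa_n r^n>0$, so $\di B(x,r)$ is essentially equal to a closed ball of radius exactly $r$. This ball is unique, because if two closed balls of positive radius are essentially equal then each is essentially contained in the other, hence (by the remark below) genuinely contained in the other, forcing equal centers and radii. Denote the center by $\psi_{\di,r}(x)$, so that \eqref{eq:psi1} holds. The remark we use repeatedly is elementary: an essential inclusion between two closed balls of positive radius is a genuine inclusion (an interior point of the first lying outside the closed second ball would have a whole neighborhood inside the set difference, contradicting the null-set assumption). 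This lets us pass freely between the ``essential'' monotonicity of $\di$ and honest containments of balls: whenever $B_1\subset B_2$ are balls, $B(\psi_{\di,s_1}(c_1),s_1)=\di B_1\subset\di B_2=B(\psi_{\di,s_2}(c_2),s_2)$ genuinely, i.e.\ $\|\psi_{\di,s_1}(c_1)-\psi_{\di,s_2}(c_2)\|\le s_2-s_1$.

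\emph{Proof of \eqref{eq:pseudoContract}.} Let $d=\max\{\|x-x'\|,|r-r'|\}$. If $\|x-x'\|\le|r-r'|$, then one of $B(x,r),B(x',r')$ is contained in the other (say, if $r\ge r'$, then $\|x-x'\|\le r-r'$ means $B(x',r')\subset B(x,r)$); applying the remark to the images gives $\|\psi_{\di,r}(x)-\psi_{\di,r'}(x')\|\le|r-r'|=d$. If instead $\|x-x'\|>|r-r'|$, put $u=(x'-x)/\|x'-x\|$ and $\rho=(\|x-x'\|+r+r')/2$, and let $y$ be the midpoint of the segment joining $x-ru$ and $x'+r'u$; a one-line computation gives $\|y-x\|=\rho-r$ and $\|y-x'\|=\rho-r'$, both nonnegative in this case, so $B(x,r)\subset B(y,\rho)$ and $B(x',r')\subset B(y,\rho)$. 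Applying $\di$ and the remark, $\|\psi_{\di,r}(x)-\psi_{\di,\rho}(y)\|\le\rho-r$ and $\|\psi_{\di,r'}(x')-\psi_{\di,\rho}(y)\|\le\rho-r'$, so by the triangle inequality $\|\psi_{\di,r}(x)-\psi_{\di,r'}(x')\|\le 2\rho-r-r'=\|x-x'\|=d$. This proves \eqref{eq:pseudoContract}; taking $r=r'$ shows each $\psi_{\di,r}$ is a contraction, completing (i).

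\emph{Proof of (ii).} Now $\di$ also respects $H$-cylinders, so Lemma~\ref{lemm5} applies and yields a contraction $\varphi_{\di}:\R\to\R$ with $\di B(x+tu,r)=B(x+\varphi_{\di}(t)u,r)$, essentially, for $x\in u^{\perp}$, $t\in\R$, $r>0$. Comparing this with \eqref{eq:psi1} and using the uniqueness of the ball noted above gives $\psi_{\di,r}(x+tu)=x+\varphi_{\di}(t)u$. Writing an arbitrary $y\in\R^n$ as $y=(y|u^{\perp})+\langle y,u\rangle u$, we obtain $\psi_{\di,r}(y)=(y|u^{\perp})+\varphi_{\di}(\langle y,u\rangle)u$, which is manifestly independent of $r$; denoting the common map by $\psi_{\di}$ gives the first identity in \eqref{eq:omregn}. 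The second follows by taking $y=tu$, since then $y|u^{\perp}=0$ and $\langle y,u\rangle=t$, so $\psi_{\di}(tu)=\varphi_{\di}(t)u$ and hence $\langle\psi_{\di}(tu),u\rangle=\varphi_{\di}(t)$.

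\textbf{Main obstacle.} Everything except \eqref{eq:pseudoContract} is essentially bookkeeping. The one point requiring care is the case $\|x-x'\|>|r-r'|$ of \eqref{eq:pseudoContract}: a naive chaining through an intermediate ball (first moving the center, then adjusting the radius, or passing through a concentric enlargement) only produces the weaker bound $\|x-x'\|+|r-r'|$. To land on the sharp constant $\max\{\|x-x'\|,|r-r'|\}$ one must route both balls through their common \emph{smallest} enclosing ball $B(y,\rho)$, so that the two terms $\rho-r$ and $\rho-r'$ produced by the triangle inequality add up to exactly $\|x-x'\|$.
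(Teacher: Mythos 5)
Your proof is correct and follows the same strategy as the paper: for part (i), you route both balls through the smallest common enclosing ball $B(y,\rho)$ to obtain the sharp bound in \eqref{eq:pseudoContract} (and handle the nested case separately), and for part (ii) you match \eqref{eq:psi1} against Lemma~\ref{lemm5}. The only differences are cosmetic — you make the enclosing ball explicit and spell out that an essential inclusion between closed balls of positive radius is a genuine inclusion, a point the paper leaves implicit.
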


\begin{proof}
(i) As $\di$ preserves measure and maps balls to balls, for every $x\in \R^n$ and $r>0$, there is a $z_{x,r}\in \R^n$ such that $\di B(x,r)=B(z_{x,r},r)$.  Define $\psi_{\di,r}:\R^n\to \R^n$ by $\psi_{\di,r}(x)=z_{x,r}$.

Let $x,x'\in \R^n$ and $r,r'>0$ be given. If neither of the balls $B(x,r)$ and $B(x',r')$ is contained in the other, the smallest ball $B$ that contains both has diameter $d=\|x-x'\|+r+r'$.  Moreover, $\|x-x'\|>|r-r'|$.
Monotonicity and the measure-preserving property imply that
$\di B$ is a ball of radius $d$ that contains $\di B(x,r)=B(\psi_{\di,r}(x),r)$ and $\di B(x',r')=B(\psi_{\di,r'}(x'),r')$, so
\[
\|\psi_{\di,r}(x)-\psi_{\di,r'}(x')\|+r+r'\le d= \|x-x'\|+r+r'.
\]
If $B(x,r)$ is contained in $B(x',r')$, say, the smallest ball $B$ containing them both is $B(x',r')$, and $\|x-x'\|\le r'-r$.  Then $B(\psi_{\di,r}(x),r)\subset B(\psi_{\di,r'}(x'),r')$ and hence
$\|\psi_{\di,r}(x)-\psi_{\di,r'}(x')\|\le  r'-r$. Combining these two cases gives \eqref{eq:pseudoContract}.  Letting $r=r'$ in \eqref{eq:pseudoContract}, we see that $\psi_{\di,r}$ is a contraction.

(ii) If $\di$ also respects $H$-cylinders, then Lemma~\ref{lemm5} gives
\begin{align*}%\label{eq:psi1}
\di B(y+tu,r)=B\big(y+\varphi_{\di}(t)u,r\big)
\end{align*}
for all $y\in u^\perp$, $t\in \R$, and $r>0$. With \eqref{eq:psi1} and $x=y+tu$, we obtain
$$B(\psi_{\di,r}(x),r)=\di B(x,r)=
B\big(x|u^\perp+\varphi_{\di}(\langle x,u\rangle) u,r\big)$$
for all $x\in \R^n$.  Hence,
\[
\psi_{\di,r}(x)=x|u^\perp+\varphi_{\di}(\langle x,u\rangle)u
\]
is independent of $r>0$ and \eqref{eq:omregn} holds.
\end{proof}

Without additional assumptions, the contractions $\psi_{\di,r}$ from \eqref{eq:psi1} may depend on $r$, as the following example shows.

\begin{ex}
{\em Let $u\in S^{n-1}$ be fixed and define $\di:\cK_n^n\to \cK_n^n$ by
\[
\di K=B(ru,r),
\]
where $r>0$ is chosen such that $\cH^n(B(ru,r))=\cH^n(K)$. Then $\di$ is monotonic, measure preserving, and maps balls to balls.  The associated contraction $\psi_{\di,r}:\R^n\to\R^n$ from \eqref{eq:psi1} is defined by $\psi_{\di,r}(x)=ru$, $x\in \R^n$, and hence depends on $r$.} \qed
\end{ex}

The next lemma gives some conditions guaranteeing that $\psi_{\di,r}$ is independent of $r>0$.

\begin{lem}\label{lem11_inependent}
Let ${\mathcal{K}}^n_n\subset{\mathcal{E}}\subset {\mathcal{L}}^n$, let $\di:\cE\to\cL^n$ be monotonic, measure preserving, and map balls to balls, and let $\psi_{\di,r}:\R^n\to\R^n$ be the associated contraction from \eqref{eq:psi1}.

\noindent{\rm{(i)}} If $\di$ is smoothing, then $\psi_{\di,r}$  is independent of $r>0$.

\noindent{\rm{(ii)}} Let $H=u^\perp+t_0u$, $u\in S^{n-1}$, $t_0\in\R$, be a hyperplane. If there is an $r_0>0$ such that the restriction of $\psi_{\di,r_0}$ to $H$ is the identity on $H$, then $\di\big|_{\cK_n^n}$ respects $H$-cylinders and hence $\psi_{\di,r}$ is independent of $r>0$.
\end{lem}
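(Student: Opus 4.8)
\emph{Overview.} I would prove the two parts independently; in each the idea is to feed balls into the defining structural property.

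\emph{Part (i).} The plan is to apply the smoothing inclusion \eqref{eq:smothDef} with $A=B(x,r)$. Fix $x\in\R^n$ and $r,d>0$; then $A$ is bounded and $A+dB^n=B(x,r+d)\in\cK^n_n\subset\cE$, so \eqref{eq:smothDef} applies. Using \eqref{eq:psi1}, $\di B(x,r)=B(\psi_{\di,r}(x),r)$, and since the density points of a closed ball are exactly its interior, $\di^*A=\inte B(\psi_{\di,r}(x),r)$ and hence $(\di^*A)+dB^n=B(\psi_{\di,r}(x),r+d)$; similarly $\di(A+dB^n)=B(\psi_{\di,r+d}(x),r+d)$. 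So \eqref{eq:smothDef} reduces to $B(\psi_{\di,r}(x),r+d)\subset B(\psi_{\di,r+d}(x),r+d)$, essentially. Two closed balls of equal radius, one essentially contained in the other, have the same center, so $\psi_{\di,r}(x)=\psi_{\di,r+d}(x)$; as $d>0$ is arbitrary, $\psi_{\di,r}(x)$ does not depend on $r$.

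\emph{Part (ii).} Write $x=x|u^\perp+\langle x,u\rangle u$ for $x\in\R^n$. The first step is to boost the hypothesis to all radii: I claim $(\psi_{\di,r}(x)-x)|u^\perp=0$ for all $r>0$ and $x\in\R^n$. Indeed, for $w\in u^\perp$ the point $h_w:=x|u^\perp+w+t_0u$ lies in $H$, so $\psi_{\di,r_0}(h_w)=h_w$; inserting the pairs $(x,r)$ and $(h_w,r_0)$ into \eqref{eq:pseudoContract}, squaring, and splitting along $\R^n=u^\perp\oplus\R u$ yields
\[
\bigl\|(\psi_{\di,r}(x)-x)|u^\perp-w\bigr\|^2+\beta^2\le\max\bigl\{\|w\|^2+\alpha^2,\,(r-r_0)^2\bigr\},
\]
where $\alpha,\beta\in\R$ do not depend on $w$. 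Taking $w=-\lambda\,(\psi_{\di,r}(x)-x)|u^\perp$ and letting $\lambda\to\infty$, the left side eventually exceeds the right side unless $(\psi_{\di,r}(x)-x)|u^\perp=0$, proving the claim. Next I would show that $\di\big|_{\cK^n_n}$ respects $H$-cylinders. An $H$-cylinder has the form $C=\{x:\|x|u^\perp-p_0\|\le\rho\}$, which is a countable intersection of halfspaces $J_j=\{x:\langle x,v_j\rangle\le c_j\}$ with $v_j\in u^\perp\cap S^{n-1}$; so it suffices to prove that, for $K\in\cK^n_n$ and any halfspace $J=\{x:\langle x,v\rangle\le c\}$ with $v\in u^\perp\cap S^{n-1}$, $K\subset J$ implies $\di K\subset J$ essentially. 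Given $\epsilon>0$, since $K$ is compact with $\langle\cdot,v\rangle\le c<c+\epsilon$ on $K$, there is a (large) ball $B_\epsilon=B(z_\epsilon,r_\epsilon)$ with $K\subset B_\epsilon\subset J_\epsilon:=\{x:\langle x,v\rangle\le c+\epsilon\}$, for instance one internally tangent to $\partial J_\epsilon$. Because $v\in u^\perp$, the claim gives $\langle\psi_{\di,r_\epsilon}(z_\epsilon),v\rangle=\langle z_\epsilon,v\rangle$, whence $\di B_\epsilon=B(\psi_{\di,r_\epsilon}(z_\epsilon),r_\epsilon)\subset J_\epsilon$; by monotonicity, $\di K\subset\di B_\epsilon\subset J_\epsilon$ essentially. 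Letting $\epsilon\to0$ through $1/k$ gives $\di K\subset\bigcap_k J_{1/k}=J$ essentially, and intersecting over $j$ gives $\di K\subset C$ essentially, so $\di\big|_{\cK^n_n}$ respects $H$-cylinders. Finally, applying Lemma~\ref{lem11}(ii) to $\di\big|_{\cK^n_n}$ — which is monotonic, measure preserving, maps balls to balls, respects $H$-cylinders, and is defined on a class containing all balls — yields that $\psi_{\di,r}$ is independent of $r>0$.

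\emph{Main obstacle.} The crux is the last step of Part (ii): monotonicity of $\di$ directly provides only \emph{lower} bounds for $\di K$ (namely $\di K\supset\di A$ for $A\subset K$), whereas confining $\di K$ to a cylinder is an \emph{upper} bound. The resolution is to enclose the compact body $K$ in a single, necessarily large, ball that still fits inside the slightly enlarged halfspace $J_\epsilon$, transport that ball by $\di$ — it stays in $J_\epsilon$ precisely because of the radius-uniform identity $(\psi_{\di,r}(x)-x)|u^\perp=0$ — and then send $\epsilon\to0$. Establishing that radius-uniform identity from the pseudo-contraction estimate \eqref{eq:pseudoContract} is the other point requiring care, and it is handled by letting a point of $H$ recede to infinity.
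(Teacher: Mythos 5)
Your Part~(i) is essentially the paper's argument: both substitute a ball into the smoothing inclusion \eqref{eq:smothDef}, use that $\di^*$ applied to a closed ball gives the corresponding open ball, and compare centers of two balls of equal radius, one essentially contained in the other.

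Your Part~(ii), however, takes a genuinely different route, and it is a nice one. The paper first promotes the identity on $H$ from $r_0$ to all $r\ge r_0$ by a purely geometric device: for $x\in H$ and $r=r_0+\varepsilon$, the ball $B(x,r)$ contains the union $D$ of the balls $B(z,r_0)$ over $z\in x+(\varepsilon S^{n-1}\cap H)$; since $\di$ fixes each $B(z,r_0)$ and $\di B(x,r)$ is a ball of radius $r$ containing $D$, intersecting two antipodal $\varepsilon$-balls forces the center to be $x$. The paper then bounds $\di K$ above by intersecting over all balls $B(x,r_{K,x})$ with $x\in H$ and $r_{K,x}\ge r_0$, using monotonicity and the circumscribing-ball construction directly. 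You instead exploit the \emph{pseudo}-contraction estimate \eqref{eq:pseudoContract} in an essentially analytic way: pairing $(x,r)$ with a point $h_w\in H$ that recedes to infinity in the direction $-(\psi_{\di,r}(x)-x)\big|u^\perp$ forces $(\psi_{\di,r}(x)-x)\big|u^\perp=0$ for \emph{every} $x\in\R^n$ and \emph{every} $r>0$ — a quantitatively stronger intermediate fact than the paper proves, and one that holds right away for all radii rather than only for $r\ge r_0$. You then replace the paper's uncountable ball intersection with halfspaces $J$ (normal $v\in u^\perp$), enclosing $K$ in a large ball internally tangent to a slightly translated copy $J_\ee$ and letting $\ee\downarrow 0$, and obtain the cylinder as a countable intersection of such halfspaces. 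Both proofs are correct. What the paper's version buys is a very short, self-contained geometric argument with no $\ee$-limit; what your version buys is the cleaner and more informative statement that $\psi_{\di,r}$ preserves the $u^\perp$-component globally, which makes the reduction to halfspaces transparent. One remark worth making explicit in your write-up: your argument, like the paper's, uses $n\ge 2$ (so that $\dim H\ge 1$ and $H$ is unbounded, allowing $w$ to recede to infinity), which is the paper's standing hypothesis.
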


\begin{proof}
(i) Suppose that $\di$ is monotonic, measure preserving, and smoothing.  (The assumption that $\di$ maps balls to balls is superfluous here, since it is guaranteed by \cite[Lemma~4.1]{BGGK2}.) If  $0<r<d$, the smoothing property and \eqref{eq:psi1} yield
$$
\mathrm{int} B(\psi_{\di,r}(x),d)=(\di^* B(x,r))+(d-r)B^n\subset \di^* B(x,d)=\mathrm{int} B(\psi_{\di,d}(x),d)
$$
and hence $\psi_{\di,r}(x)=\psi_{\di,d}(x)$ for all $x\in \R^n$.  Consequently, $\psi_{\di,r}$ does not depend on $r$.

(ii) Assume that $\psi_{\di,r_0}$ acts as the identity on $H$ for some $r_0>0$. We claim that \eqref{eq:psi1} holds with  $\psi_{\di,r}=\psi_{\di,r_0}$ for all $x\in H$ and $r>r_0$.
Indeed, for $r=r_0+\varepsilon$, $\varepsilon>0$, the ball $B(x,r)$, $x\in H$, contains the set
$$D=\bigcup_{z\in x+(
\varepsilon S^{n-1}\cap H)} B(z,r_0),$$
so monotonicity implies that
the ball $\di B(x,r)= B(\psi_{\di,r}(x),r)$ contains
\[
\bigcup_{z\in x+(\varepsilon S^{n-1}\cap H)}\di B(z,r_0)=D.
\]
It follows that $\psi_{\di,r}(x)=x=\psi_{\di,r_0}(x)$.

Let $K\in \cK_n^n$, and for $x\in H$, let $r_{K,x}=\min\{r\ge r_0: K\subset B(x,r)\}$.  Note that if $y\not\in (K|u^\perp)+\myspan\{u\}$, we can choose an $r\ge r_0$ and an $x\in H$ such that $y\not\in B(x,r)\supset K$ and hence $y\not\in B(x,r_{K,x})$.  Therefore
$$K\subset L= \bigcap_{x\in H} B(x,r_{K,x})\subset(K|u^\perp)+\myspan\{u\},$$
where $L\in \cK_n^n$, and monotonicity shows that $\di$ respects $H$-cylinders.  Then Lemma~\ref{lem11}(ii) implies that $\psi_{\di,r}$ is independent of $r>0$.
\end{proof}

When $\psi_{\di,r}$ is independent of $r>0$, we shall henceforth write $\psi_{\di}$ instead. In the following example, we give explicit formulas for the contractions associated with the set maps from Example~\ref{ex429}.  In the formulas, we use the first equation in \eqref{eq:omregn} and the fact that $x=x|u^{\perp}+\langle x,u\rangle u$ for $x\in\R^n$ and $u\in S^{n-1}$.

\begin{ex}\label{ex310}
{\em Let $H=u^{\perp}+t_0u$, $u\in S^{n-1}$, $t_0\in \R$.

(i) The reflection $R_{H}$ in $H$ has associated contractions
$$
\varphi_{R_H}(t)=2t_0-t,~~t\in\R~~\quad{\text{and}}~~\quad
\psi_{R_H}(x)=x+2(t_0-\langle x,u\rangle)u,~~x\in\R^n.
$$

(ii) The polarization $P_H$ with respect to $H$, oriented positively in the direction $u$, has associated contractions
\begin{equation}\label{f2}
\varphi_{P_H}(t)=|t-t_0|+t_0,~~t\in\R~~\quad{\text{and}}~~\quad
\psi_{P_H}(x)=x+(|t_0-\langle x,u\rangle|+(t_0-\langle x,u\rangle))u,~~x\in\R^n.
\end{equation}
Note that $R_H\circ P_H$ is the polarization with respect to $H$ with opposite orientation.

(iii) The Steiner symmetrization $S_H$ with respect to $H$ has associated contractions
$$
\varphi_{S_H}(t)=t_0,~~t\in\R~~\quad{\text{and}}~~\quad
\psi_{S_H}(x)=x+(t_0-\langle x,u\rangle)u,~~x\in\R^n.
$$

(iv) The Solynin set map with respect to $H$, oriented positively in the direction $u$, has associated contractions
\begin{equation}\label{f4}
\varphi_{So_H}(t)=\max\{t_0,t\},~~t\in\R~~\quad{\text{and}}~~\quad
\psi_{So_H}(x)=x+\max\{0,t_0- \langle x,u\rangle\}u,~~x\in\R^n.
\end{equation}

(v) The Brock set map with respect to $H$ and parameter $0\le b\le 1$, oriented positively in the direction $u$, has associated contractions
\begin{align*}
\varphi_{B_H}(t)&=b(t-t_0)+t_0,~~t\in\R~~\quad{\text{and}}~~\quad
\psi_{B_H}(x)=x+(b-1)(\langle x,u\rangle-t_0)u,~~x\in\R^n. ~~~~\quad\quad\qed
\end{align*}
}
\end{ex}

Recall the following consequence of \cite[Theorem~6.6]{BGGK1}.

\begin{thm}\label{thmm7}
Let $n\ge 2$, let $H=u^{\perp}+t_0u$, $u\in S^{n-1}$, $t_0\in \R$,
let $\di:{\mathcal{K}}^n_n \rightarrow {\mathcal{L}}^n$ be a monotonic and measure-preserving map that respects $H$-cylinders and maps balls to balls, and let $\varphi_{\di}$ be its associated contraction from \eqref{ball}. Then for each $K\in \cK^n_n$ and ${\mathcal{H}}^{n-1}$-almost all $x\in H$,
\begin{equation}\label{kseg}
(\di K)\cap (H^{\perp}+x)=(K\cap (H^{\perp}+x))+(\varphi_{\di}(t_x)-t_x) u,
\end{equation}
up to a set of ${\mathcal{H}}^1$-measure zero, where $x+t_xu$ is the midpoint of $K\cap (H^{\perp}+x)$.
\end{thm}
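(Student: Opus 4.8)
The plan is to reduce the statement to the one-dimensional fiberwise picture and then invoke the contraction $\varphi_{\di}$ together with a density/approximation argument. Fix $K\in\cK^n_n$. Without loss of generality we may take $u=e_n$ and $t_0=0$, so that $H=e_n^\perp$ and $H^\perp=\myspan\{e_n\}$; the general case follows by an obvious isometry. For $x\in H$ write $K_x=K\cap(H^\perp+x)$ for the vertical fiber over $x$. Since $K$ is a convex body, $K_x$ is a (possibly degenerate, possibly empty) closed segment for every $x\in H$, and for $\cH^{n-1}$-almost all $x$ in the relative interior of $K|H$ it is a nondegenerate segment; we denote its midpoint by $x+t_xu$ and its half-length by $\ell(x)\ge 0$. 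The aim is to show that, up to $\cH^1$-null sets, $(\di K)_x$ is the translate of $K_x$ that moves its midpoint from $x+t_xu$ to $x+\varphi_{\di}(t_x)u$.

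The first step is to control $(\di K)_x$ from below. Because $\di$ respects $H$-cylinders, $\di K$ is contained in the smallest infinite $H$-cylinder containing $K$, so $(\di K)_x$ is empty for $x\notin K|H$, essentially. For the lower bound we use monotonicity together with the fact that balls are mapped to balls: for a ball $B(y,\rho)$ with $y=x+t_xu\in H^\perp+x$ and $\rho\le\ell(x)$ one has $B(y,\rho)\subset K_x\subset K$, hence $\di B(y,\rho)=B(x+\varphi_{\di}(t_x)u,\rho)\subset\di K$ by \eqref{ball}; letting $\rho\uparrow\ell(x)$ and taking the union over a countable dense set of radii shows that the open segment of half-length $\ell(x)$ centered at $x+\varphi_{\di}(t_x)u$ lies in $\di K$, essentially on the fiber over $x$. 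Thus $(K_x)+(\varphi_{\di}(t_x)-t_x)u\subset(\di K)_x$ up to an $\cH^1$-null set. For the matching upper bound (equality of the two sides up to $\cH^1$-measure zero) the idea is to integrate: by Fubini,
\[
\int_{H}\cH^1\big((\di K)_x\big)\,d\cH^{n-1}(x)=\cH^n(\di K)=\cH^n(K)=\int_{H}\cH^1(K_x)\,d\cH^{n-1}(x),
\]
where the middle equality is the measure-preserving property. Since for $\cH^{n-1}$-a.e.\ $x$ the fiber $(\di K)_x$ contains a segment of $\cH^1$-measure equal to $\cH^1(K_x)$, the integrands agree pointwise a.e., which forces $(\di K)_x$ to coincide, up to an $\cH^1$-null set, with that segment. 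This gives \eqref{kseg}.

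Two technical points deserve care, and one of them is the main obstacle. The minor point is the handling of the exceptional sets: the ``essentially'' in the hypotheses on $\di$ refers to $\cH^n$-null sets, and to pass to a statement about individual fibers one must invoke Fubini to see that for $\cH^{n-1}$-almost every $x\in H$ the relevant $\cH^n$-null sets meet $H^\perp+x$ in an $\cH^1$-null set; the cited \cite[Theorem~6.6]{BGGK1}, of which this is a stated consequence, already carries out such bookkeeping. The genuine difficulty is that the lower-bound argument above produces, on the fiber over $x$, a segment whose \emph{center} is at $x+\varphi_{\di}(t_x)u$ only because \eqref{ball} places the image ball $\di B(y,\rho)$ at center $x+\varphi_{\di}(t_x)u$ for \emph{every} admissible $\rho$ simultaneously, independently of $\rho$; this independence is exactly what Lemma~\ref{lemm5} (equivalently Lemma~\ref{lem11}(ii)) guarantees, so the proof hinges on invoking that independence correctly and on checking that the union of the balls $B(x+\varphi_{\di}(t_x)u,\rho)$, $\rho<\ell(x)$, really does exhaust the desired open segment up to $\cH^1$-null sets rather than just a dense subset. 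Once the contraction $\varphi_{\di}$ is known to be $r$-independent, the remaining steps are routine, and the conclusion \eqref{kseg} follows.
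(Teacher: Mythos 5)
Your lower-bound step contains a fatal error, and without it the Fubini argument for the upper bound has nothing to pair with.

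You write that for $y=x+t_xu$ and $\rho\le\ell(x)$ one has $B(y,\rho)\subset K_x\subset K$. The first inclusion is meaningless as stated: $K_x=K\cap(H^\perp+x)$ is a one-dimensional segment, and an $n$-dimensional ball of positive radius cannot be contained in it. The charitable reading, $B(y,\rho)\subset K$, is simply false for general convex bodies. The half-length $\ell(x)$ of the fiber is not the inradius of $K$ at the midpoint $y$. Take $K=B^2$ in $\R^2$ with $u=e_2$ and $x=0.99\,e_1$. Then $\ell(x)=\sqrt{1-0.99^2}\approx0.141$, while $\mathrm{dist}(y,\partial K)=0.01$, so $B(y,\ell(x))\not\subset K$. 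Hence you cannot invoke monotonicity to place $B(x+\varphi_\di(t_x)u,\rho)$ inside $\di K$ for $\rho$ up to $\ell(x)$, and the union of images of inscribed balls centered at $y$ covers only a short segment near $x+\varphi_\di(t_x)u$, not the full translated fiber.

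This is not a cosmetic issue that one fixes by shrinking $\rho$: using only balls genuinely inscribed in $K$, one must move the centers along the fiber to approach its endpoints, and a ball centered at $x+tu$ maps under $\di$ to a ball centered at $x+\varphi_\di(t)u$, not at $x+(\varphi_\di(t_x)+t-t_x)u$. Since $\varphi_\di$ is only a contraction (it need not be affine with unit slope near $t_x$), the images of these balls do not line up to exhaust the translated segment $(K_x)+(\varphi_\di(t_x)-t_x)u$; they only produce the set $\{x+\varphi_\di(t)u : x+tu\in K_x\}$ fattened by small radii, and that set can be strictly shorter than $K_x$ in $\cH^1$-measure. The issue you flag as ``the genuine difficulty'' --- the $r$-independence of the center in \eqref{ball} --- is in fact handled by Lemma~\ref{lemm5}/Lemma~\ref{lem11}(ii); what your argument actually lacks is any mechanism to control the fiber of $\di K$ away from the midpoint. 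The paper does not prove Theorem~\ref{thmm7} directly but quotes it as a consequence of \cite[Theorem~6.6]{BGGK1}, where the argument is more elaborate precisely because monotonicity together with ball-to-ball applied to inscribed balls is not enough; one must bring in comparisons with carefully chosen $H$-cylinders and the measure-preserving property in a coordinated way.
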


We do not have such a result for maps $\di$ that do not necessarily respect $H$-cylinders; see Problem~\ref{prob2}.

A {\em body} in $\R^n$ is a regular compact set, i.e., a compact set equal to the closure of its interior.  For later use, we record the following result on the convergence of contractions.

\begin{lem}\label{lem11824}
Let $C$ be a body in $\R^n$, $n\ge 1$, and let $\psi_k:\R^n\to \R^n$, $k\in \N$, be contractions. The following are equivalent.

\noindent{\rm(i)} $(\psi_k)$ converges pointwise on $C$.

\noindent{\rm(ii)} $(\psi_k)$ converges uniformly on $C$.

\noindent{\rm(iii)} $(\psi_k)$ converges in $L^1$ on $C$.
\end{lem}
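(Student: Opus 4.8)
Here is how I would prove Lemma~\ref{lem11824}.

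The plan is to establish the implications $\mathrm{(ii)}\Rightarrow\mathrm{(i)}$, $\mathrm{(ii)}\Rightarrow\mathrm{(iii)}$, $\mathrm{(i)}\Rightarrow\mathrm{(ii)}$, and $\mathrm{(iii)}\Rightarrow\mathrm{(ii)}$, which together give all three equivalences. The first is trivial. The second is immediate: $C$ is compact, hence bounded, so $\cH^n(C)<\infty$, while $C$ is a body, so $\inte C\neq\emptyset$ and $\cH^n(C)>0$; thus uniform convergence of $(\psi_k)$ on $C$ forces convergence in $L^1$ on $C$. All the substance lies in the last two implications, and both rest on two elementary facts, one about $C$ and one about contractions.

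The first fact is a density observation: \emph{every subset $E\subset C$ with $\cH^n(C\setminus E)=0$ is dense in $C$.} Indeed, if $V$ is a nonempty relatively open subset of $C$, write $V=C\cap W$ with $W$ open in $\R^n$; picking $p\in V\subset C=\cl\inte C$, the open neighbourhood $W$ of $p$ meets $\inte C$, so $W\cap\inte C$ is a nonempty open set of positive $\cH^n$-measure contained in $V$, whence $\cH^n(V)>0$ and $V\cap E\neq\emptyset$. The second fact is the standard Arzel\`a--Ascoli-type statement: \emph{a sequence of $1$-Lipschitz maps $\R^n\to\R^n$ converging on a dense subset of the compact set $C$ converges uniformly on $C$, to a $1$-Lipschitz limit.} This is the usual $3\varepsilon$ argument: given $\varepsilon>0$, cover $C$ by finitely many balls $B(p_i,\varepsilon/2)$ with $p_i\in C$, pick a point of the dense subset in each $B(p_i,\varepsilon/2)$, and use the Lipschitz bound to upgrade convergence at these finitely many points to uniform Cauchyness on $C$.

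Granting these, $\mathrm{(i)}\Rightarrow\mathrm{(ii)}$ is just the second fact applied with the dense subset $C$ itself. For $\mathrm{(iii)}\Rightarrow\mathrm{(ii)}$, suppose $\psi_k\to g$ in $L^1$ on $C$. Then a subsequence $(\psi_{k_j})$ converges to $g$ $\cH^n$-almost everywhere on $C$; by the first fact this is convergence on a dense subset of $C$, so by the second fact $(\psi_{k_j})$ converges uniformly on $C$ to a continuous, indeed $1$-Lipschitz, function $\widetilde g$ with $\widetilde g=g$ almost everywhere. It remains to see that the \emph{full} sequence converges uniformly to $\widetilde g$: if not, some subsequence would satisfy $\sup_C\|\psi_{k_\ell}-\widetilde g\|\ge\varepsilon>0$, yet that subsequence still converges to $\widetilde g$ in $L^1$ on $C$, so the same argument yields a further subsequence converging to $\widetilde g$ uniformly on $C$, a contradiction. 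This proves $\mathrm{(iii)}\Rightarrow\mathrm{(ii)}$.

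The only mildly delicate point is this passage to subsequences in $\mathrm{(iii)}\Rightarrow\mathrm{(ii)}$, forced because $L^1$ convergence only directly yields convergence almost everywhere along a subsequence; the rest is routine, and I anticipate no genuine obstacle.
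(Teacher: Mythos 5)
Your proof is correct and rests on exactly the same two ingredients as the paper's: the regularity of the body $C$ (so that full-measure subsets are dense, or equivalently that any $x\in C$ has nearby points at which a.e.\ convergence holds), and the $1$-Lipschitz property to propagate convergence from such points via a $3\varepsilon$ argument. The only organizational difference is that the paper closes the cycle $\mathrm{(i)}\Rightarrow\mathrm{(ii)}\Rightarrow\mathrm{(iii)}\Rightarrow\mathrm{(i)}$, proving $\mathrm{(iii)}\Rightarrow\mathrm{(i)}$ by contradiction at a single point, whereas you prove the stronger $\mathrm{(iii)}\Rightarrow\mathrm{(ii)}$ directly, which costs you the extra (but standard) subsequence-of-a-subsequence step to upgrade from uniform convergence along a subsequence to uniform convergence of the full sequence; this is the same underlying argument, just packaged slightly less economically.
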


\begin{proof}
Suppose that (i) holds, i.e., $(\psi_k)$ converges pointwise to a function $\psi$ on $C$. It is easy to see that $\psi$ is a contraction on $C$.  Let $r>0$ and $\varepsilon>0$ be given and let $x_1,\ldots,x_m\in C$ be such that the open balls $\inte B(x_i,r)$, $i=1,\dots,m$, cover $C$. Let $x\in C$ and choose $i\in \{1,\dots,m\}$ such that $x\in B(x_i,r)$.  Since $\psi$ and $\psi_{k}$ are contractions, we have $\|\psi(x)-\psi(x_i)\|\le\ee$ and $\|\psi_{k}(x)-\psi_{k}(x_i)\|\le\ee$ for all $k\in \N$. Then
\begin{align*}
\|\psi_k(x)-\psi(x)\|\le \|\psi_k(x)-\psi_k(x_i)\|+\|\psi_k(x_i)-\psi(x_i)\|+\|\psi(x_i)-\psi(x)\|\le 3 \varepsilon,
\end{align*}
for all $k\ge k_0$, where $k_0$ can be chosen independently of $i$ and $x$. This implies (ii).

If (ii) holds, then
$$\|\psi_k-\psi\|_{C,1}=\int_C\|\psi_k(x)-\psi(x)\|\,dx\le \|\psi_k-\psi\|_{C,\infty}\,{\cH}^n(C)\to 0$$
as $k\to\infty$, proving (iii).

Finally, suppose that (iii) holds.  If (i) is false, there is an $x\in C$ and an $\varepsilon>0$ such that $\|\psi_{k_j}(x)-\psi(x)\|>\ee$ for all $j\in \N$.  It is well known (see, for example, \cite[Theorem~2.7]{LL}) that since $\|\psi_{k_j}-\psi\|_{C,1}\to 0$ as $j\to \infty$, there is a subsequence $(\psi_{k_{j_m}})$ of $(\psi_{k_{j}})$ that converges to $\psi$ almost everywhere on $C$.  Since $C$ is a body, we can choose $y\in C$ with $\|x-y\|\le \ee/3$ such that $\psi_{k_{j_m}}(y)\to \psi(y)$ as $m\to\infty$.  Since $\psi$ and $\psi_{k_{j_m}}$ are contractions, we have $\|\psi(x)-\psi(y)\|\le\ee/3$ and $\|\psi_{k_{j_m}}(x)-\psi_{k_{j_m}}(y)\|\le\ee/3$ for all $m\in \N$.  Then
\begin{align*}
\|\psi_{k_{j_m}}(x)-\psi(x)\|\le \|\psi_{k_{j_m}}(x)-\psi_{k_{j_m}}(y)\|+\|\psi_{k_{j_m}}(y)-\psi(y)\|+
\|\psi(y)-\psi(x)\|<\ee
\end{align*}
for sufficiently large $m$, a contradiction.
\end{proof}

\section{Approximation of rearrangements and set maps}\label{Approxrearrange}

In this section, we consider mappings $T: X\to  {\cV}(\R^n)$, where $X\subset {\cV}(\R^n)$.  Let $1\le p< \infty$.  A sequence of such maps $T_k$, $k\in \N$, {\em converges to $T: X\to {\cV}(\R^n)$ in $L^p$} on $X$ if
\[
\|T_kf-Tf\|_p\to 0
\]
as $k\to\infty$ for all $f\in X$. Let $\cU$ and $\cW$ be two families of mappings from $X$ to ${\cV}(\R^n)$.  We say that a map $T\in\cU$ is {\em approximable in $L^p$} on $X$ by maps in $\cW$ if there are $T_k\in \cW$, $k\in \N$, such that $T_k\to T$ in $L^p$ on $X$ as $k\to \infty$, and {\em weakly approximable in $L^p$} on $X$ by maps in $\cW$ if for all $f\in X$, there are $T_{f,k}\in \cW$, $k\in \N$, such that
$$
\|T_{f,k}f-Tf\|_p\to 0
$$
as $k\to \infty$.  We also call a map $T\in\cU$ {\em sequentially approximable in $L^p$} on $X$ by maps in $\cW$ if there is a sequence $(T_k)$, $k\in \N$, of maps in $\cW$ such that
$$
T_k\circ T_{k-1}\circ \cdots\circ T_1\longrightarrow T
$$
in $L^p$ on $X$ as $k\to \infty$, and {\em weakly sequentially approximable in $L^p$} on $X$ by maps in $\cW$ if for all $f\in X$, there is a sequence $(T_{f,k})$, $k\in \N$, of maps in $\cW$ such that
$$
\|(T_{f,k}\circ T_{f,k-1}\circ \cdots\circ T_{f,1})f-Tf\|_p \to 0
$$
as $k\to \infty$.

Suppose that $X\subset {\cV}(\R^n)$ and $\cE=\{A\in \cL^n:1_A\in X\}$. Then the induced map $\di_T:\cE\to {\cL}^n$ given by \eqref{eqIndic} is well defined, and all the previous definitions transfer from maps $T:X\to {\cV}(\R^n)$ to their associated maps $\di_T:\cE\to {\cL}^n$.  In order to allow discussion of the approximation of set mappings independently, however, we record the following parallel definitions.

Consider mappings $\di: \cE\to\cL^n$, where $\cE\subset \cL^n$. A sequence of such maps $\di_k$, $k\in \N$, {\em converges to $\di: \cE\to\cL^n$ in $L^p, 1\le p< \infty$}, if
\[
\|1_{\di_k A}-1_{\di A}\|_p\to 0
\]
as $k\to\infty$ for all $A\in \cE$. Since the left-hand side is the same for all $1\le p<\infty$, it is only necessary to consider $p=1$, so we shall say that $\di_k$ {\em converges to $\di$} and write $\di_k\to \di$ if $\di_k$ converges to $\di$ in $L^1$.

Let $\cF$ and $\cG$ be two families of mappings from $\cE$ to $\cL^n$.  We say that a map $\di\in\cF$ is {\em approximable} on $\cE$ by maps in $\cG$ if there are $\di_k\in \cG$, $k\in \N$, such that $\di_k\to \di$ as $k\to \infty$, and {\em weakly approximable} on $\cE$ by maps in $\cG$ if for all $A\in \cE$, there are $\di_{A,k}\in \cG$, $k\in \N$, such that
$$
\|1_{\di_{A,k}A}-1_{\di A}\|_1\to 0
$$
as $k\to \infty$.  We also call a map $\di\in\cF$ {\em sequentially approximable} on $\cE$ by maps in $\cG$ if there is a sequence $(\di_k)$, $k\in \N$, of maps in $\cG$ such that
$$
\di_k\circ\di_{k-1}\circ \cdots\circ\di_1\longrightarrow \di
$$
as $k\to \infty$, and {\em weakly sequentially approximable} on $\cE$ by maps in $\cG$ if for all $A\in \cE$, there is a sequence $(\di_{A,k})$, $k\in \N$, of maps in $\cG$ such that
$$
\|1_{(\di_{A,k}\circ\di_{A,k-1}\circ \cdots\circ\di_{A,1})A}-1_{\di A}\|_1 \to 0
$$
as $k\to \infty$.

If $\cG$ is closed under finite compositions, then (weak) sequential approximability implies (weak) approximability.

For $1\le p<\infty$ and $X\subset L^p_+(\R^n)$, let $\cl_{\!p}\,X$ be the closure of $X$ in the $L^p$ norm.

\begin{lem}\label{lemLpclosure}
Let $1\le p<\infty$, let $X\subset L^p_+(\R^n)$, let $T:\cV(\R^n)\to \cV(\R^n)$ be a rearrangement, and let $\cW$ be a family of rearrangements from $\cV(\R^n)$ to $\cV(\R^n)$.  The following statements are equivalent.

\noindent{\rm{(i)}} $T$ is weakly approximable in $L^p$ on $X$ by maps in $\cW$.

\noindent{\rm{(ii)}} $T$ is weakly approximable in $L^p$ on $\cl_{\!p} X$ by maps in $\cW$.

This equivalence remains true if ``weakly approximable" is replaced throughout by ``approximable" or by ``sequentially approximable."
\end{lem}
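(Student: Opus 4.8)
The implication (ii)$\Rightarrow$(i) is immediate in all three cases: since $X\subseteq\cl_{\!p} X$, any sequence (or, in the weak case, any family) of maps in $\cW$ that witnesses approximability of $T$ on the larger set also witnesses it on the smaller one. So the plan is to prove (i)$\Rightarrow$(ii), and the point is that this reduces to a single triangle-inequality estimate, valid in all three cases, once one invokes the $L^p$-contractivity of rearrangements recorded just before Proposition~\ref{fromSetToFct}.

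First I would record two preliminary observations. Since an $L^p$-limit of nonnegative functions is nonnegative $\cH^n$-a.e., the set $L^p_+(\R^n)$ is closed in $L^p$, whence $\cl_{\!p} X\subseteq L^p_+(\R^n)\subseteq\cV(\R^n)$; in particular $Tg$ and $Sg$ are defined for every $g\in\cl_{\!p} X$ and every rearrangement $S:\cV(\R^n)\to\cV(\R^n)$, and the relevant differences below lie in $L^p$. Secondly, $T$ is $L^p$-contracting, every map in $\cW$ is $L^p$-contracting, and (directly from the definition) any finite composition of $L^p$-contracting maps from $\cV(\R^n)$ to $\cV(\R^n)$ is again $L^p$-contracting; hence in the ``sequentially approximable'' case each $T_k=S_k\circ\cdots\circ S_1$ is $L^p$-contracting as well.

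To prove (i)$\Rightarrow$(ii), fix $g\in\cl_{\!p} X$ and $\varepsilon>0$, and choose $f\in X$ with $\|f-g\|_p<\varepsilon/3$. In the ``approximable'' case let $(T_k)\subseteq\cW$ be the given sequence, and in the ``sequentially approximable'' case let $T_k=S_k\circ\cdots\circ S_1$ for the given fixed sequence $(S_k)\subseteq\cW$; in either case pick $N$ so that $\|T_k f-Tf\|_p<\varepsilon/3$ for $k\ge N$. Using that $T_k$ and $T$ are $L^p$-contracting,
\[
\|T_k g-Tg\|_p\le\|T_k g-T_k f\|_p+\|T_k f-Tf\|_p+\|Tf-Tg\|_p\le 2\|f-g\|_p+\varepsilon/3<\varepsilon
\]
for all $k\ge N$, so the same $(T_k)$ (respectively the same $(S_k)$) witnesses (sequential) approximability of $T$ on $\cl_{\!p} X$. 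In the ``weakly approximable'' case, apply the hypothesis to this particular $f$ to obtain $S\in\cW$ with $\|Sf-Tf\|_p<\varepsilon/3$; the same three-term estimate gives $\|Sg-Tg\|_p<\varepsilon$, and taking $\varepsilon=1/m$, $m\in\N$, yields maps $T_{g,m}\in\cW$ with $\|T_{g,m}g-Tg\|_p\to 0$, i.e.\ $T$ is weakly approximable in $L^p$ on $\cl_{\!p} X$.

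There is no genuine obstacle here: the entire argument rests on recognizing that the single inequality $\|T_k g-T_k f\|_p\le\|g-f\|_p$ is precisely what transfers approximation from $X$ to $\cl_{\!p} X$, and no diagonal argument is required. This also explains why ``weakly sequentially approximable'' is not among the equivalent statements: there the approximating sequence must be fixed for $g$ before $f$ is allowed to vary with $\varepsilon$, so the reduction above breaks down.
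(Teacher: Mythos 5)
Your proof is correct and follows essentially the same route as the paper: (ii)$\Rightarrow$(i) is trivial, and (i)$\Rightarrow$(ii) rests on exactly the paper's three-term triangle-inequality estimate together with the $L^p$-contractivity of $T$ and of the approximating maps (with the finite compositions $S_k\circ\cdots\circ S_1$ being $L^p$-contracting as compositions of such). The extra observations you include (that $\cl_{\!p}X\subseteq L^p_+(\R^n)\subseteq\cV(\R^n)$ so the expressions are well defined, and the remark on why ``weakly sequentially approximable'' cannot be appended to the list) are not in the paper's proof but are sound and helpful.
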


\begin{proof}
We first consider the case of weak approximation. Since (ii) clearly implies (i), we only have to show the converse. To do so, assume that (i) holds and let $f\in \cl_{\!p} X$. For any $\ee>0$, there is a function $g\in X$ with $\|f-g\|_p\le \ee/3$. Due to (i) there is a rearrangement $T_g\in \cW$  with $\|T_gg-Tg\|_p\le \ee/3$. Using the $L^p$-contraction property of $T$ and $T_g$, we obtain
\begin{align*}
\| T_gf-Tf\|_p&\le \| T_gf-T_gg\|_p+\| T_gg-Tg\|_p+\| Tg-Tf\|_p\\
& \le  2\|f-g\|_p+\| T_gg-Tg\|_p\le \ee,
\end{align*}
as required.

Suppose instead that $T$ is approximable in $L^p$ on $X$ by maps in $\cW$.  Then there is a sequence $(T_k)$ in $\cW$ that converges to $T$ in $L^p$ on $X$. The rearrangement $T_g$ can be chosen to be a member of this sequence, and the proof above shows that $T$ is approximable in $L^p$ on $\cl_{\!p}X$ by maps in $(T_k)$.

If we assume instead that $T$ is sequentially approximable in $L^p$ on $X$ by maps in $\cW$, then $(T_k)$ satisfies
$$T_{k}=S_{k}\circ S_{k-1}\circ\cdots S_{1},$$
where $(S_{k})$ is a sequence of maps in $\cW$, but otherwise the proof is the same.
\end{proof}

In this context, see Problem~\ref{probTranslation}.

\begin{thm}\label{thm514}
Let $1\le p<\infty$.  Suppose that $T:{\cV}(\R^n)\to {\cV}(\R^n)$ is a rearrangement, and let $\cW$ be a family of rearrangements from ${\cV}(\R^n)$ to $\cV(\R^n)$. The following statements are equivalent.

\noindent{\rm{(i)}} $T$ is approximable in $L^p$ on $L^p_+(\R^n)$ by maps in $\cW$.

\noindent{\rm{(ii)}} $\di_T$ is approximable on $\mathcal L^n$ by maps in $\{\di_W:W\in \cW\}$.

\noindent{\rm{(iii)}} $\di_T$ is approximable on $\mathcal C^n$ by maps in $\{\di_W:W\in \cW\}$.

The equivalence remains true if ``approximable" is replaced throughout by ``sequentially approximable."  Moreover, {\em (i)}$\Rightarrow${\em (ii)}$\Leftrightarrow${\em (iii)} also holds if ``approximable" is replaced throughout by ``weakly approximable."
\end{thm}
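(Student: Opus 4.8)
The plan is to prove the cycle (i)$\Rightarrow$(ii)$\Rightarrow$(iii)$\Rightarrow$(ii)$\Rightarrow$(i), where only the last implication is substantial, and then to check that each argument survives the passage to the sequential and weak settings. For (i)$\Rightarrow$(ii) I would evaluate the approximating maps on characteristic functions: if $T_k\in\cW$ and $T_k\to T$ in $L^p$ on $L^p_+(\R^n)$, then for every $A\in\cL^n$ one has $1_A\in L^p_+(\R^n)$, and equimeasurability of $T_k$ and $T$ lets Proposition~\ref{fromSetToFct} give $T_k1_A=1_{\di_{T_k}A}$ and $T1_A=1_{\di_TA}$ essentially; hence $\cH^n(\di_{T_k}A\,\triangle\,\di_TA)^{1/p}=\|T_k1_A-T1_A\|_p\to0$, which is exactly $\di_{T_k}\to\di_T$ on $\cL^n$ with $\di_{T_k}\in\{\di_W:W\in\cW\}$. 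The implication (ii)$\Rightarrow$(iii) is immediate since $\cC^n\subset\cL^n$. For (iii)$\Rightarrow$(ii), given $A\in\cL^n$ and $\ee>0$ I would use inner regularity of $\cH^n$ to pick a compact $C\subset A$ with $\cH^n(A\setminus C)<\ee$; since every map $\di_W$ and $\di_T$ is monotonic and measure preserving (Propositions~\ref{fromSetToFct} and~\ref{lemapril30}), the elementary identity $\cH^n(\di A\,\triangle\,\di C)=\cH^n(\di A)-\cH^n(\di C)=\cH^n(A\setminus C)$ holds for every such $\di$, so that
\[
\cH^n(\di_{W_k}A\,\triangle\,\di_TA)\le\cH^n(\di_{W_k}C\,\triangle\,\di_TC)+2\cH^n(A\setminus C);
\]
letting $k\to\infty$ and then $\ee\to0$ gives $\di_{W_k}\to\di_T$ on all of $\cL^n$.

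The core is (ii)$\Rightarrow$(i). Write the approximating set maps as $\di_{W_k}$, $W_k\in\cW$, so $\di_{W_k}\to\di_T$ on $\cL^n$. The plan is first to prove $W_kf\to Tf$ in $L^p$ whenever $f\ge0$ is bounded, say $0\le f\le N$, with $\cH^n(\{f>0\})<\infty$, and then to conclude convergence on all of $L^p_+(\R^n)$ via Lemma~\ref{lemLpclosure}, since such functions are $L^p$-dense in $L^p_+(\R^n)$. For such an $f$ one has $f\in L^1(\R^n)$, and equimeasurability forces $W_kf,Tf\le N$ with supports of finite measure; by Proposition~\ref{lemapril30}(ii), $\{W_kf>t\}=\di_{W_k}\{f>t\}$ and $\{Tf>t\}=\di_T\{f>t\}$ essentially for $t>0$, so the layer-cake formula $|a-b|=\int_0^\infty|1_{\{a>t\}}-1_{\{b>t\}}|\,dt$ (valid for $a,b\ge0$) together with Tonelli's theorem yields
\[
\|W_kf-Tf\|_1=\int_0^N\cH^n\big(\di_{W_k}\{f>t\}\,\triangle\,\di_T\{f>t\}\big)\,dt.
\]
For each $t\in(0,N)$ the integrand tends to $0$ (apply (ii) to $\{f>t\}\in\cL^n$) and is dominated by the constant $2\cH^n(\{f>0\})$, so dominated convergence gives $\|W_kf-Tf\|_1\to0$; since $W_kf,Tf$ take values in $[0,N]$, this upgrades to $\|W_kf-Tf\|_p^p\le N^{p-1}\|W_kf-Tf\|_1\to0$.

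For the sequential version the only extra ingredient is the identity $\di_{W_k\circ\cdots\circ W_1}=\di_{W_k}\circ\cdots\circ\di_{W_1}$, essentially, which follows by induction from Proposition~\ref{fromSetToFct} and the fact that each monotonic $W_i$ respects essential equality of its argument; with this, writing $T_k:=W_k\circ\cdots\circ W_1$ (again a rearrangement, with $\di_{T_k}$ the matching composition of monotonic, measure-preserving set maps) makes all four implications go through verbatim, now invoking the sequential form of Lemma~\ref{lemLpclosure} in the last one. For the weak version, (i)$\Rightarrow$(ii) and (ii)$\Rightarrow$(iii) follow from the same set-by-set computations, and (iii)$\Rightarrow$(ii) follows from the inner-regularity estimate above combined with a diagonal argument: for fixed $A\in\cL^n$ take compact $C_m\subset A$ with $\cH^n(A\setminus C_m)<1/m$, and from the weak approximation of $\di_T$ at $C_m$ select a map in $\{\di_W:W\in\cW\}$ carrying $A$ to within $3/m$ of $\di_TA$ in the sense that $\cH^n(\cdot\,\triangle\,\di_TA)<3/m$; these maps, one per $m$, witness weak approximability of $\di_T$ on $\cL^n$. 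Note that (ii)$\Rightarrow$(i) genuinely fails in the weak case, because a fixed $f\in L^p_+(\R^n)$ has a continuum of superlevel sets whereas weak approximation of $\di_T$ provides only one approximating sequence per set; this is why the theorem asserts only (i)$\Rightarrow$(ii)$\Leftrightarrow$(iii) there.

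I expect the main obstacle to be precisely the $L^p$ (as opposed to $L^1$) content of (ii)$\Rightarrow$(i): the layer-cake representation is intrinsically an $L^1$ statement and applies only to $L^1$ functions, so one must truncate to bounded, finitely supported $f$ — where the pointwise bound $|a-b|^p\le N^{p-1}|a-b|$ for $a,b\in[0,N]$ converts $L^1$ convergence into $L^p$ convergence — and then recover all of $L^p_+(\R^n)$ using the $L^p$-contraction of rearrangements together with density, a step encapsulated in Lemma~\ref{lemLpclosure}.
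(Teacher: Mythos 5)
Your proof is correct, and it takes a genuinely different route from the paper on the two substantive steps. For (iii)$\Rightarrow$(ii), the paper converts the set-map statement back into a rearrangement statement via \eqref{eqoct72}, applies Lemma~\ref{lemLpclosure} to $X=\{1_C:C\in\cC^n\}$, and uses $L^1$-density of compacts; you instead argue directly on sets with the identity $\cH^n(\di A\triangle\di C)=\cH^n(A\setminus C)$ for $C\subset A$ and a triangle inequality, which is cleaner, avoids the detour through Lemma~\ref{lemLpclosure}, and (as you note) adapts to the weak case by a transparent diagonalisation where the paper merely says the arguments are ``easily adapted.'' For (ii)$\Rightarrow$(i), the paper works with nonnegative simple integrable functions, derives the closed form $Sf=\sum_k\alpha_k 1_{(\di_S B_k)\setminus\di_S B_{k-1}}$ from \eqref{eqoct62}, and bounds $\|Tf-Wf\|_p$ explicitly before invoking density via Lemma~\ref{lemLpclosure}; you use the layer-cake formula and dominated convergence on bounded, finite-measure-support functions, upgrading $L^1$ to $L^p$ via $|a-b|^p\le N^{p-1}|a-b|$, and then apply Lemma~\ref{lemLpclosure}. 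Both are instances of decomposing $f$ by superlevel sets (the paper discretely, you continuously), and both end at the same density step; your version is perhaps less explicit but streamlined, while the paper's gives a concrete finite bound and adapts trivially to the sequential case because the same $W$ serves all of $B_1,\dots,B_m$. Your separate observation that $\di_{W_k\circ\cdots\circ W_1}=\di_{W_k}\circ\cdots\circ\di_{W_1}$ essentially (via Proposition~\ref{fromSetToFct} plus monotonicity respecting essential equality) is exactly the ingredient the paper elides when it says the sequential case is ``easily adapted,'' and it is worth making explicit. Your heuristic at the end — that weak approximability provides only one approximating sequence per set while a single $f$ has a continuum of superlevel sets — correctly explains why the theorem asserts only (i)$\Rightarrow$(ii)$\Leftrightarrow$(iii) in the weak setting, though strictly speaking this is an obstruction to the proof rather than a disproof of (ii)$\Rightarrow$(i).
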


\begin{proof}
Suppose that (i) is true.  The characteristic functions of sets in ${\cL}^n$ are contained in $L^p_+(\R^n)$. It follows that there is a sequence $(W_{k})$ of maps in ${\cW}$ such for each $A\in {\cL}^n$, we have $\|W_{k} 1_A-T1_A\|_p\to 0$ as $k\to \infty$. By \eqref{eqoct72}, this is equivalent to $\|1_{\di_{W_k}A}-1_{\di_{T}A}\|_p\to 0$ as $k\to \infty$, where we make take $p=1$.  This proves (ii). Obviously (ii) implies (iii).

Suppose that (iii) holds.  Using \eqref{eqoct72} and the above definitions of approximability for rearrangements and set maps, we see that $T$ is approximable in $L^1$ on $\{1_C:C\in {\mathcal{C}}^n\}$ by maps in $\cW$. By Lemma~\ref{lemLpclosure}, this also holds on $\cl_{\!1}\{1_C:C\in \mathcal C^n\}$.  Since each set in $\cL^n$ can be approximated in $L^1$ by compact sets, (ii) follows.

The arguments above are easily adapted when ``approximable" is replaced throughout by ``weakly approximable" or ``sequentially approximable."

Finally, assume (ii).  Let $X$ be the class of nonnegative simple integrable functions, and let $f=\sum_{i=1}^m \alpha_i 1_{A_i}\in X$ be a simple function with $0<\alpha_m<\alpha_{m-1}<\cdots<\alpha_1$ and disjoint sets $A_1,\ldots,A_m\in \cL^n$.  If $S$ is any rearrangement on $X$, then using \eqref{eqoct62} with $T$ replaced by $S$, we obtain
\[
(Sf)(x)=\max\{0,\sup\{\alpha_k: x\in \di_S(A_1\cup\cdots\cup A_k)\}\},
\]
essentially. Setting $B_k=A_1\cup\cdots\cup A_k$, $k=1,\ldots,m$, and $B_0=\emptyset$, we have
\begin{equation}
\label{eq:Rearrange_simple}
Sf=\sum_{k=1}^m\alpha_k
1_{(\di_S B_k)\setminus \di_S B_{k-1}},
\end{equation}
essentially.

Let $\ee>0$ be given. Since $B_1,\ldots,B_m\in  \cL^n$, (ii) guarantees the existence of $W\in \cW$ such that
\[
\|1_{\di_W B_k}-1_{\di_T B_k}\|_1\le \frac{\ee}{4\sum_{i=1}^m\alpha_i}
\]
for $k=1,\ldots,m$. By \eqref{eq:Rearrange_simple}, applied with $S$ replaced by $T$ and by $W$, we have
\[
\|Tf-Wf\|_p\le
\sum_{k=1}^m\alpha_k \|1_{(\di_W B_k)\setminus \di_W B_{k-1}}-1_{(\di_T B_k)\setminus\di_T B_{k-1}}\|_1\le\ee.
\]
We conclude that $T$ is approximable in $L^p$ on $X$ by maps in $\cW$. Since $\cl_{\!p}\,X=L^p_+(\R^n)$, (i) now follows from Lemma~\ref{lemLpclosure}.

The proof of (ii)$\Rightarrow$(i) is easily adapted when ``approximable" is replaced by ``sequentially approximable."
\end{proof}

The main interest here is in approximation by polarizations and we begin the discussion with a couple of simple remarks.  Firstly, (weak) sequential approximability by polarizations is the same as (weak) sequential approximability by finite compositions of polarizations, so we only use the former simpler terms.  However, with the other forms of approximation, the finite compositions of polarizations, and not just the polarizations, is the appropriate class to consider.  For example, if $\cE\subset{\mathcal{B}}^n$ and $\di:\cE\to {\cL}^n$ is weakly approximable by polarizations, then for each $A\in{\cE}$, either $\di A=A$, essentially, or $\di A=P_{H_A}A$, essentially, for some hyperplane $H_A$ depending on $A$.  We omit the easy proof. It follows that if $\di$ is approximable by polarizations, then $\di$ is essentially either the identity or a polarization itself.

Van Schaftingen \cite[Theorem~1 and Section~4.3]{VS2} proved that Steiner and Schwarz rearrangements are sequentially approximable in $L^p$ on $L^p_+(\R^n)$ by polarizations. By Theorem~\ref{thm514}, the same is true for the associated set maps on ${\cL}^n$. In contrast, Theorem~\ref{thm514} and the following lemma show that Brock and Solynin rearrangements are not sequentially approximable in $L^p$ on $L^p_+(\R^n)$ by polarizations.

\begin{lem}\label{Markuslem}
Let $\cE\subset\cL^n$ contain all balls and let $\di:\cE\to \cL^n$ be  sequentially approximable on $\cE$ by polarizations. Then there is a hyperplane $H$ such that for each ball $B$ in $\R^n$, we have $\di B=\di B^{\dagger}$, where $B^{\dagger}$ is the reflection of $B$ in $H$.

It follows that translations, Brock set maps with $b\ne 0$, and Solynin set maps (both with respect to arbitrary hyperplanes) are not sequentially approximable on $\cE$ by polarizations.
\end{lem}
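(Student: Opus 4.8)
\emph{Proof plan.} I would translate the statement into one about associated contractions. The hypothesis gives a fixed sequence of hyperplanes $(H_k)_{k\in\N}$ such that $\di_k:=\di_{P_{H_k}}\circ\cdots\circ\di_{P_{H_1}}$ satisfies $\|1_{\di_kB}-1_{\di B}\|_1\to0$ for every ball $B$ (balls lie in $\cE$). Since each polarization maps $B(x,r)$ exactly to $B(\psi_{P_K}(x),r)$ (see \eqref{pol2} and Example~\ref{ex310}(ii)), one has $\di_kB(x,r)=B(\Psi_k(x),r)$ with $\Psi_k:=\psi_{P_{H_k}}\circ\cdots\circ\psi_{P_{H_1}}$, a contraction on $\R^n$. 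The plan is then to produce a limiting contraction $\Psi$ from the $\Psi_k$ and to observe that it is invariant under the reflection in $H_1$.

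For the limit, I would note that $\|1_{B(a,r)}-1_{B(b,r)}\|_1=2\cH^n(B(a,r)\setminus B(b,r))$ is a continuous function of $\|a-b\|$ that vanishes only at $0$ and is strictly increasing on $[0,2r]$; hence $L^1$-convergence (so $L^1$-Cauchyness) of $(1_{B(\Psi_k(x),r)})_k$ forces $(\Psi_k(x))_k$ to be Cauchy in $\R^n$, and its limit $\Psi(x):=\lim_k\Psi_k(x)$ is independent of $r$. Then $\di B(x,r)=B(\Psi(x),r)$ essentially, and $\Psi$ is a contraction as a pointwise limit of contractions.

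The decisive point is that $\psi_{P_{H_1}}(x)$ equals whichever of $x$ and its reflection $x^{\dagger}$ in $H_1$ lies in $H_1^{+}$, hence is unchanged under $x\mapsto x^{\dagger}$; thus $\psi_{P_{H_1}}\circ R_{H_1}=\psi_{P_{H_1}}$ (this can also be read off the formulas in Example~\ref{ex310}(i),(ii)). Consequently $\Psi_k\circ R_{H_1}=\Psi_k$ for all $k$, so $\Psi\circ R_{H_1}=\Psi$. Taking $H=H_1$: for any ball $B=B(x,r)$ one has $B^{\dagger}=B(R_H(x),r)$, so $\di B^{\dagger}=B(\Psi(R_H(x)),r)=B(\Psi(x),r)=\di B$ essentially, which is the assertion.

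For the final claim I would argue by contradiction: if $\di$ is a translation $\tau_v$, a Brock set map $\di_{B_G}$ with $0<b\le1$, or a Solynin set map $\di_{So_G}$ (with $G=u^{\perp}+t_0u$), and $\di$ is sequentially approximable by polarizations, the part just proved gives a hyperplane $H$ with $\di B=\di B^{\dagger}$ essentially for every ball $B$. Writing $\di B(x,r)=B(\psi_{\di}(x),r)$ essentially (with $\psi_{\tau_v}(x)=x+v$ and $\psi_{B_G}$, $\psi_{So_G}$ from Example~\ref{ex310}(v),(iv)) and using that $B(a,r)=B(b,r)$ essentially iff $a=b$, this forces $\psi_{\di}\circ R_H=\psi_{\di}$, where $R_H$ is the reflection in $H$. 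But $\psi_{\tau_v}(x)=x+v$ and $\psi_{B_G}(x)=x|u^{\perp}+(b\langle x,u\rangle+(1-b)t_0)u$ (here $b\ne0$) are injective on $\R^n$, which would force $R_H=\Id$, impossible; while $\psi_{So_G}(x)=x|u^{\perp}+\max\{t_0,\langle x,u\rangle\}u$ is injective on the open half-space $U=\{\langle x,u\rangle>t_0\}$, with $\psi_{So_G}^{-1}(\psi_{So_G}(y))=\{y\}$ for $y\in U$, so $\psi_{So_G}\circ R_H=\psi_{So_G}$ would force $R_H$ to fix every point of $U$, i.e.\ $U\subset H$, impossible. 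I expect the main obstacle to be the second step — arguing that $L^1$-convergence of the iterates merely on balls genuinely yields a well-defined limiting contraction $\Psi$ — while everything else, including the identity $\psi_{P_{H_1}}\circ R_{H_1}=\psi_{P_{H_1}}$ and the injectivity computations, is routine once set up.
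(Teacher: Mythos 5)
Your proof is correct, and it shares the paper's key observation but packages it differently. The crux in both cases is that the first polarization $P_{H_1}$ in the approximating sequence satisfies $P_{H_1}B=P_{H_1}B^\dagger$ for every ball $B$, where $^\dagger$ is reflection in $H_1$; you express this as $\psi_{P_{H_1}}\circ R_{H_1}=\psi_{P_{H_1}}$. The paper, however, then simply applies the remaining iterates $\di_k\circ\cdots\circ\di_2$ to both sides of $\di_1 B=\di_1 B^\dagger$ and passes to the $L^1$-limit on sets, avoiding your intermediate step of extracting a well-defined limiting contraction $\Psi$. That extraction is sound — the $L^1$ distance between two balls of radius $r$ is a strictly increasing function of the distance between their centers on $[0,2r]$ and constant beyond, so $L^1$-Cauchyness of the iterated images does force Cauchyness of $\Psi_k(x)$ — but it is more machinery than the statement requires, and the step you flagged as a possible obstacle is in fact avoidable entirely. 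For the final claim, your treatment of translations and Brock maps (injectivity on balls forces $R_H=\Id$) matches the paper's. Your Solynin argument genuinely differs and is arguably cleaner: you note $\psi_{So_G}^{-1}(\psi_{So_G}(y))=\{y\}$ for $y$ in the open half-space $U=\{\langle x,u\rangle>t_0\}$, so $\psi_{So_G}\circ R_H=\psi_{So_G}$ would pin $R_H$ to the identity on $U$, hence $U\subset H$, which is dimensionally impossible. The paper instead runs a two-case geometric analysis on the position of $H$ relative to $H_0$ and exhibits explicit balls violating $\di B=\di B^\dagger$ in each case. Your contraction-based argument unifies all three examples under a single injectivity principle (full or restricted), at the price of needing the $r$-independent contractions $\psi_\di$ from Example~\ref{ex310}; the paper's argument stays purely at the level of sets and needs less preliminary apparatus.
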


\begin{proof}
Let $(\di_k)$ be a sequence of polarizations such that $\di_k\circ\dots\circ \di_1\to \di$ as $k\to\infty$. Suppose that $\di_1$ is taken with respect to an oriented hyperplane $H$.  From \eqref{pol2} with $P_H=\di_1$, we see that if $B$ is any ball, then $\di_1 B=B$ if the center of $B$ lies in $H^+$, while $\di_1 B=B^{\dagger}$ if the center of $B$ lies in $H^-$.  Therefore
$\di_1 B=\di_1 B^{\dagger}$. Applying $\di_k\circ\dots\circ \di_2$ to both sides and taking the limit as $k\to\infty$, we obtain $\di B=\di B^{\dagger}$.

Since translations and Brock set maps with $b\ne 0$ are injective on the family of balls in $\R^n$, it follows immediately that they are not sequentially approximable on $\cE$ by polarizations.

Suppose that the Solynin set map $\di_{So_{H_0}}$ with respect to a hyperplane $H_0$ is sequentially approximable on $\cE$ by polarizations, and let $H$ be as in the first part of the lemma.  Assume first that $H\cap \inte H_0^+\neq\emptyset$. Clearly there is a ball $B\subset H_0^+\cap H^+$ such that the reflection $B^{\dagger}$ of $B$ in $H$ is contained in $H_0^+\cap H^-$.  Then $\di_{So_{H_0}}B=B\neq B^{\dagger}=\di_{So_{H_0}} B^{\dagger}$, a contradiction. Consequently, $H$ must be parallel to $H_0$ and contained in $H_0^-$. But if $B\subset H_0^+$ is a ball, then $\di_{So_{H_0}}B=B$, while $B^{\dagger}\subset H_0^-$ means (see Example~\ref{ex429}(iv)) that the center of $\di_{So_{H_0}}B^{\dagger}$ is in $H_0$.  Therefore $\di_{So_{H_0}}B\neq \di_{So_{H_0}} B^{\dagger}$, another contradiction.
\end{proof}

However, the translation $\di A=A+au$, $u\in S^{n-1}$, $a\ge 0$, is weakly sequentially approximable on ${\mathcal B}^n$ by polarizations.  To see this, let $P_{t,u}$ be the polarization with positive halfspace $\{z\in \R^n:\langle z,u\rangle \ge t\}$, bounded by the hyperplane $H_{t,u}=u^{\perp}+tu$.  Suppose that $\{x,x+(a/2)u\}\subset H^-_{t,u}$. Then $P_{{t,u}}x=x^{\dagger} =x-2(\langle x,u\rangle-t)u$, the reflection of $x$ in $H_{t,u}$. Therefore $P_{{-t-a/2,-u}}x^{\dagger}=x+au$, the reflection of $x^{\dagger}$ in $H_{t+a/2,u}$. If $A\in {\mathcal B}^n$, we can choose $t\in \R$ such that $A\cup (A+(a/2)u)\subset H^-_{t,u}$ and conclude that $\di_t=P_{-t-a/2,-u}\circ P_{t,u}$ satisfies $\di_t A=A+au$. We do not know if translations are weakly sequentially approximable on $\cL^n$; see Problem~\ref{probTranslation}.

The translation $\di A=A+au$, $u\in S^{n-1}$, $a\ge 0$, is approximable on ${\cL}^n$ by finite compositions of polarizations.  The sequence $(\di_k)$, with $\di_t$ defined as in the previous paragraph, is an approximating sequence. Indeed, if $A\in {\mathcal B}^n$, and $k_0\in \N$ is chosen such that $A\cup (A+(a/2)u)\subset H_{k_0,u}$, then $\di_kA=A+au$ for all $k\ge k_0$, as we saw above.  By Theorem~\ref{thm514}, this remains true with ${\mathcal B}^n$ replaced by $\cL^n$.

Solynin \cite{Sol1} proved that the Solynin rearrangement $So_H$ is weakly approximable in $L^p$ on $L^p_+(\R^n)$ by finite compositions of polarizations. By Theorem~\ref{thm514}, the same is true for the associated set map on ${\cL}^n$.  In Theorem~\ref{gabcor} below, we show that ``weakly approximable" can be replaced by ``approximable" in these results.  We do not know if it can be replaced by ``weakly sequentially approximable"; see Problem~\ref{prob6}.

We do not know if a map $\di:\cE\to \cL^n$ which is approximable by finite compositions of polarizations is also weakly sequentially approximable by polarizations, or if the converse is true; see Problem~\ref{prob3}.

\section{Approximation by polarizations with respect to parallel hyperplanes}\label{Approximation}

Recall that if $\cE\subset \cL^n$, then $\cJ(\cE)$ denotes the family of monotonic and measure-preserving maps from $\cE$ to $\cL^n$ that map balls to balls, and if $H$ is a hyperplane in $\R^n$, then $\cJ_H(\cE)\subset \cJ(\cE)$ is the subfamily that respect $H$-cylinders.

\begin{lem}\label{lem12024}
Let $n\ge 2$, let $H=u^{\perp}+t_0u$, $u\in S^{n-1}$, $t_0\in \R$, let $\di_k\in \cJ_H(\cK_n^n)$, $k=0,1,\dots$, and let $\varphi_{\di_k}:\R\to\R$, $k=0,1,\dots$, be their associated contractions from \eqref{ball}.

\noindent{\rm{(i)}} If $\di_k\to \di_0$ as $k\to\infty$, then $\varphi_{\di_k}\to \varphi_{\di_0}$ pointwise as $k\to \infty$.

\noindent{\rm{(ii)}} If $\varphi_{\di_k}\to \varphi$ pointwise as $k\to\infty$, there is an essentially unique $\di\in \cJ_H(\cK_n^n)$, with associated contraction $\varphi$ from \eqref{ball}, such that $\di_k\to\di$ as $k\to\infty$.
\end{lem}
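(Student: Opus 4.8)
The plan is to leverage the fiber-wise description of the maps in $\cJ_H(\cK^n_n)$ provided by Theorem~\ref{thmm7}, together with Lemma~\ref{lemm5}, to translate statements about set-map convergence into statements about pointwise convergence of the one-dimensional contractions $\varphi_{\di_k}$, and vice versa.

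For part~(i), I would test convergence on a carefully chosen family of convex bodies. Fix $x\in u^\perp$, $t\in\R$, and apply $\di_k$ and $\di_0$ to a fixed ball $B=B(x+tu,r)$. By Lemma~\ref{lemm5}, $\di_k B = B(x+\varphi_{\di_k}(t)u,r)$ and $\di_0 B = B(x+\varphi_{\di_0}(t)u,r)$, essentially. Hence
\[
\|1_{\di_k B}-1_{\di_0 B}\|_1
= \cH^n\bigl(B(x+\varphi_{\di_k}(t)u,r)\,\triangle\,B(x+\varphi_{\di_0}(t)u,r)\bigr),
\]
which is a strictly increasing, continuous function of $|\varphi_{\di_k}(t)-\varphi_{\di_0}(t)|$ that vanishes only when this quantity is zero. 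Since $\di_k\to\di_0$ forces the left-hand side to $0$, we get $\varphi_{\di_k}(t)\to\varphi_{\di_0}(t)$; as $t$ was arbitrary, this is pointwise convergence.

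For part~(ii), the first task is to construct the limit map $\di$. Given the pointwise limit contraction $\varphi=\lim_k\varphi_{\di_k}$, note $\varphi:\R\to\R$ is again a contraction (a pointwise limit of $1$-Lipschitz functions is $1$-Lipschitz). I would define $\di K$ for $K\in\cK^n_n$ fiber-wise using the recipe of Theorem~\ref{thmm7}: for $\cH^{n-1}$-almost all $x\in H$, set
\[
(\di K)\cap(H^\perp+x) = \bigl(K\cap(H^\perp+x)\bigr) + \bigl(\varphi(t_x)-t_x\bigr)u,
\]
where $x+t_xu$ is the midpoint of the (nonempty) fiber $K\cap(H^\perp+x)$, and $(\di K)\cap(H^\perp+x)=\emptyset$ when the fiber is empty. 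One must check that this produces a set in $\cL^n$ that is monotonic, measure preserving (each fiber is merely translated, so $\cH^1$ is preserved fiber-wise, and Fubini gives $\cH^n(\di K)=\cH^n(K)$), respects $H$-cylinders (translation along $u$ keeps a fiber inside any $H$-cylinder that contained it), and maps balls to balls (applying the recipe to $B(x+tu,r)$ recovers $B(x+\varphi(t)u,r)$ exactly, since all fibers of a ball share their axis and the midpoint heights vary so that the translated fibers reassemble into a ball — here the fact that $\varphi$ is a contraction is what guarantees the result is convex, indeed a ball). Thus $\di\in\cJ_H(\cK^n_n)$ with associated contraction $\varphi$. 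For essential uniqueness: if $\di'\in\cJ_H(\cK^n_n)$ also has associated contraction $\varphi$, then $\di$ and $\di'$ agree on all balls by Lemma~\ref{lemm5}, and then Theorem~\ref{thmm7} (whose fiber formula depends only on $\varphi$) forces $\di K=\di' K$ essentially for every $K\in\cK^n_n$.

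It remains to show $\di_k\to\di$, i.e., $\|1_{\di_k K}-1_{\di K}\|_1\to0$ for each fixed $K\in\cK^n_n$. Using the fiber representations from Theorem~\ref{thmm7} for $\di_k$ and for $\di$ and Fubini's theorem,
\[
\|1_{\di_k K}-1_{\di K}\|_1
= \int_{K|H} \cH^1\Bigl( \bigl(I_x + \varphi_{\di_k}(t_x)u\bigr)\,\triangle\,\bigl(I_x + \varphi(t_x)u\bigr)\Bigr)\,d\cH^{n-1}(x),
\]
where $I_x = K\cap(H^\perp+x)$ is the fiber with midpoint $x+t_xu$. The integrand is bounded by $2\min\{\cH^1(I_x),\,|\varphi_{\di_k}(t_x)-\varphi(t_x)|\}\le 2\,\mathrm{diam}\,K$, hence integrable uniformly in $k$, and tends to $0$ pointwise in $x$ by the pointwise convergence $\varphi_{\di_k}(t_x)\to\varphi(t_x)$. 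The dominated convergence theorem then gives $\|1_{\di_k K}-1_{\di K}\|_1\to0$, as required.

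The main obstacle I anticipate is the construction in part~(ii): verifying cleanly that the fiber-wise recipe built from $\varphi$ actually lands in $\cK^n_n$ (or at least in $\cJ_H(\cK^n_n)$) — in particular that ``maps balls to balls'' holds, which is where the contraction property of $\varphi$ is essential and which requires checking that translating the parallel fibers of a ball by the amounts $\varphi(t_x)-t_x$ reproduces a genuine ball rather than some other convex body. The convergence step itself is then a routine application of Fubini and dominated convergence once Theorem~\ref{thmm7} is invoked.
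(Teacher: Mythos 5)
Your proof is correct and takes essentially the same approach as the paper's: part~(i) by applying both maps to a fixed ball and invoking Lemma~\ref{lemm5}, and part~(ii) by defining $\di$ via the fiber formula \eqref{kseg} and verifying $L^1$-convergence with Fubini. The only cosmetic difference is that in part~(ii) the paper first upgrades the pointwise convergence of $\varphi_{\di_k}$ to locally uniform convergence via Lemma~\ref{lem11824} and then derives outer inclusions $\di_k K\subset (\di K)+\ee[-u,u]$ (and conversely), whereas you apply dominated convergence fiber-wise directly; both routes are sound.
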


\begin{proof}
To show (i), let $t\in \R$ and use Lemma~\ref{lemm5} and the $L^1$-convergence of $\di_k$ to $\di$ to conclude that
\[
B(\varphi_{\di_k}(t)u,1)=\di_k B(tu,1)\to\di B(tu,1)=B(\varphi_\di(t)u,1),
\]
as $k\to\infty$, implying that $\varphi_{\di_k}\to \varphi_\di$ pointwise as $k\to\infty$.

To prove (ii), assume that $\varphi_{\di_k}\to \varphi$ pointwise as $k\to\infty$. In view of Lemma~\ref{lem11824}, this convergence holds uniformly on bounded intervals. It is easy to see that $\varphi$ is a contraction.  The map $\di$ defined via \eqref{kseg} has all the required properties apart from the claimed convergence.

To prove this, let $K\in \cK_n^n$, let $\varepsilon>0$, and let $I\subset \R$ be an interval such that $\{t_x: x\in K|H\}\subset I$, where $t_x$ is the midpoint of $K\cap (H^{\perp}+x)$ for each $x\in H$. The uniform convergence of $\varphi_{\di_k}$ to $\varphi$ on $I$ yields a $k_0$ such that $|\varphi_{\di_k}(t)-\varphi(t)|\le \varepsilon$ for all $k\ge k_0$ and $t\in I$. It is evident from \eqref{kseg} applied to $\di$ and to $\di_k$, $k\ge k_0$, that
\[
 \di_k K\subset  (\di K)+\varepsilon [-u,u]~~\quad{\text{and}}~~\quad \di K\subset  (\di_k K)+\varepsilon [-u,u]
\]
essentially, for $k\ge k_0$. These inclusions and Fubini's theorem imply that $\di_kK\to \di K$ as $k\to\infty$.
\end{proof}

\begin{lem}\label{lem311}
Let $H=e_n^{\perp}$, let $s_1<s_2$, and let $K$ be any convex body in $\R^n$ such that
\begin{equation}\label{eq51}
[s_1,s_2]\subset\{t_x: x\in K|H\},
\end{equation}
where $x+t_xe_n$ is the midpoint of the line segment $K\cap (H^{\perp}+x)$.  Let $\di\in \cJ_H(\cK_n^n)$ and suppose that there are $\di_{K,k}\in \cJ_H(\cK_n^n)$, $k\in \N$, such that
\begin{equation}\label{eq3111}
\|1_{\di_{K,k}K}(x)-1_{\di K}(x)\|_1\to 0
\end{equation}
as $k\to \infty$.  Then the associated contractions $\varphi_{\di_{K,k}}$ from \eqref{ball} converge uniformly to $\varphi_{\di}$ on $[s_1,s_2]$.
\end{lem}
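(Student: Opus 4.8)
The plan is to reconstruct the one-dimensional contractions $\varphi_{\di_{K,k}}$ from the sets $\di_{K,k}K$ using the fiber formula of Theorem~\ref{thmm7}, convert the $L^1$-convergence \eqref{eq3111} into an integral estimate via Fubini's theorem, and then exploit the fact that all the $\varphi_{\di_{K,k}}$ and $\varphi_\di$ are $1$-Lipschitz to upgrade convergence ``at enough values of $t$'' to uniform convergence on $[s_1,s_2]$. Hypothesis \eqref{eq51} enters precisely to guarantee that every subinterval of $[s_1,s_2]$ is swept out, with positive $\cH^{n-1}$-weight, by the midpoint-heights $t_x$ over the interior fibers of $K$, which is where all the quantitative information sits.

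First I would set up notation. Write $C=K|H$ and let $g_+,g_-\colon C\to\R$ be the upper and lower boundary functions of $K$, so that the chord $I_x:=K\cap(H^\perp+x)$ has length $\ell_x=g_+(x)-g_-(x)$ and midpoint-height $t_x=(g_+(x)+g_-(x))/2$. Here $g_+$ is concave and upper semicontinuous and $g_-$ convex and lower semicontinuous; both are continuous on $\inte C$, so $t_x$ is continuous on $\inte C$, and $\ell_x>0$ there because $(\inte K)|H=\inte C$. The slightly delicate preliminary step is to upgrade \eqref{eq51}: for any $x_0\in C$ and any $p\in\inte C$, concavity of $g_+$ and convexity of $g_-$ together with their semicontinuity force $t_{(1-\lambda)p+\lambda x_0}\to t_{x_0}$ as $\lambda\to1^-$ (the points $(1-\lambda)p+\lambda x_0$ lying in $\inte C$), so $\{t_x\colon x\in C\}\subseteq\cl\{t_x\colon x\in\inte C\}$. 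Since $\inte C$ is connected and $t_x$ is continuous on it, $\{t_x\colon x\in\inte C\}$ is an interval; as it contains $[s_1,s_2]$ in its closure, it must contain $(s_1,s_2)$.

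Next, I would apply Theorem~\ref{thmm7} to $\di$ and to every $\di_{K,k}$ (all lie in $\cJ_H(\cK_n^n)$), obtaining an $\cH^{n-1}$-null set $N\subseteq C$ off which, for all $k$, $(\di_{K,k}K)\cap(H^\perp+x)$ is $I_x$ translated in the $e_n$-direction by $\varphi_{\di_{K,k}}(t_x)-t_x$ and $(\di K)\cap(H^\perp+x)$ is $I_x$ translated by $\varphi_\di(t_x)-t_x$, both up to $\cH^1$-null sets. For $x\in\inte C\setminus N$, where $\ell_x>0$, the $\cH^1$-measure of the symmetric difference of two such parallel translates of a segment of length $\ell_x$, by amounts differing by $\delta\ge 0$, equals $2\min\{\delta,\ell_x\}$; hence Fubini's theorem gives
\[
\|1_{\di_{K,k}K}-1_{\di K}\|_1=\int_C 2\min\{\,|\varphi_{\di_{K,k}}(t_x)-\varphi_\di(t_x)|,\ \ell_x\,\}\,d\cH^{n-1}(x),
\]
and the left-hand side tends to $0$ by \eqref{eq3111}.

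Finally I would argue by contradiction for the uniform convergence. If $\varphi_{\di_{K,k}}\not\to\varphi_\di$ uniformly on $[s_1,s_2]$, then, since all these functions are $1$-Lipschitz, there are $\ee_0>0$, a subsequence $(k_j)$, and (after a further subsequence) a point $\tau_\infty\in[s_1,s_2]$ and a nondegenerate closed subinterval $J\subseteq[s_1,s_2]$ containing $\tau_\infty$ such that $|\varphi_{\di_{K,k_j}}-\varphi_\di|\ge\ee_0/2$ on $J$ for all large $j$. By the upgrade of \eqref{eq51}, the set $E=\{x\in\inte C\setminus N\colon t_x\in J\}$ contains the $t_x$-preimage of a nondegenerate open subinterval of $J\cap(s_1,s_2)$, which is a nonempty open subset of $\R^{n-1}$; hence $\cH^{n-1}(E)>0$ and $\ell_x>0$ on $E$. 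Then the displayed integral is at least $\int_E 2\min\{\ee_0/2,\ell_x\}\,d\cH^{n-1}>0$, a positive constant independent of $j$, contradicting its convergence to $0$. The step I expect to be the main obstacle is exactly this last one together with its preparation: the $L^1$-control only registers the $t_x$ values weighted by fiber length over $x$, so one genuinely needs \eqref{eq51} and the semicontinuity argument showing that the interior fibers already realize every midpoint-height in $[s_1,s_2]$ — otherwise a discontinuity of $t_x$ at $\partial(K|H)$ could place some value of $[s_1,s_2]$ beyond the reach of the fibers where the estimate has content.
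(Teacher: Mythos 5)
Your proof is correct and uses the same central ingredients as the paper's: Theorem~\ref{thmm7} to write each fiber $(\di_{K,k}K)\cap(H^\perp+x)$ as $K\cap(H^\perp+x)$ translated in the $e_n$-direction by $\varphi_{\di_{K,k}}(t_x)-t_x$, Fubini's theorem to decompose the $L^1$ distance into an integral over $K|H$ of fiber-wise symmetric differences, and the $1$-Lipschitz property of the $\varphi$'s (together with Lemma~\ref{lem11824}) to control the convergence. Where you diverge is in the finish: the paper argues directly -- it passes from $L^1$-convergence of slice measures to a.e.\ pointwise convergence of $\varphi_{\di_{K,k}}(s)\to\varphi_\di(s)$ on $[s_1,s_2]$, then extends to all $s$ by the contraction property, then invokes Lemma~\ref{lem11824} for uniformity. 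You instead compute the $L^1$ distance explicitly as $\int_C 2\min\{|\varphi_{\di_{K,k}}(t_x)-\varphi_\di(t_x)|,\ell_x\}\,d\cH^{n-1}$ and run a contradiction: if uniform convergence fails, a subsequence stays $\ge\ee_0/2$ on a fixed nondegenerate subinterval $J$, and the preimage $\{x\in\inte(K|H):t_x\in J\}$ has positive $\cH^{n-1}$-measure with $\ell_x>0$, so the integral is bounded below, contradicting \eqref{eq3111}. This buys you two things the paper leaves tacit: (a) you explicitly justify that midpoint-heights over \emph{interior} fibers already fill $(s_1,s_2)$ (via semicontinuity of $g_\pm$), which is needed because the fiber-length weight $\ell_x$ degenerates at $\partial(K|H)$; and (b) the contradiction format cleanly avoids the subtlety that $L^1$-convergence of the slice-measure functions does not by itself give a.e.\ convergence for the full sequence -- a step the paper states a bit loosely and that would strictly require a subsequence argument. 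Both proofs are valid, but yours is slightly more airtight at these two points.
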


\begin{proof}
Suppose that $K\in\cK^n_n$ satisfies \eqref{eq51} and that there are $\di_{K,k}\in \cJ_H(\cK_n^n)$, $k\in \N$, satisfying \eqref{eq3111}. Then
$$\int_{\R^n}|1_{\di_{K,k}K}(x)-1_{\di K}(x)|\,dx\to 0$$
and hence $\cH^n\left((\di_{K,k}K)\,\triangle\, \di K\right)\to 0$ as $k\to\infty$.  By Fubini's theorem,
\begin{equation}\label{eq54}
\cH^1\left(((\di_{K,k}K)\cap(H^{\perp}+x))\,\triangle\,((\di K)
\cap(H^{\perp}+x))\right)\to 0,
\end{equation}
as $k\to\infty$, for $\cH^{n-1}$-almost all $x\in K|H$. By \eqref{kseg}, we have
\begin{equation}\label{eq52}
(\di_{K,k}K)\cap(H^{\perp}+x)=K\cap(H^{\perp}+x)+(\varphi_{\di_{K,k}}(t_x)
-t_x)e_n
\end{equation}
for $k\in \N$ and
\begin{equation}\label{eq53}
(\di K)\cap(H^{\perp}+x)=K\cap(H^{\perp}+x)+(\varphi_{\di}(t_x)
-t_x)e_n,
\end{equation}
up to sets of $\cH^1$-measure zero, for $\cH^{n-1}$-almost all $x\in K|H$. From \eqref{eq51}, \eqref{eq54}, \eqref{eq52}, and \eqref{eq53}, we obtain $\varphi_{\di_{K,k}}(s)\to \varphi_{\di}(s)$ for $\cH^{1}$-almost all $s\in [s_1,s_2]$. Since $\varphi_{\di_{K,k}}$, $k\in \N$, and $\varphi_{\di}$ are contractions, the convergence holds for all $s\in [s_1,s_2]$ and hence, by Lemma~\ref{lem11824}, we have
$\varphi_{\di_{K,k}}\to \varphi_{\di}$ uniformly on $[s_1,s_2]$ as $k\to \infty$.
\end{proof}

An example of a convex body satisfying \eqref{eq51} (with equality) is the parallelepiped
$$K=[-e_2,e_2]+\cdots +[-e_n,e_n]+[s_1(e_1+e_n), s_2(e_1+e_n)],$$
for which $K|H=[-e_2,e_2]+\cdots +[-e_{n-1},e_{n-1}]+[s_1e_1, s_2e_1]$.

\begin{lem}\label{lem1}
Let $H$ be a hyperplane in $\R^n$ and let $\cJ'\subset \cJ_H(\cK_n^n)$.  A map $\di\in \cJ_H(\cK_n^n)$ is approximable on ${\cK}^n_n$ by maps in $\cJ'$ if and only if it is weakly approximable on ${\cK}^n_n$ by maps in $\cJ'$.
\end{lem}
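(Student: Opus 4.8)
The forward implication ``approximable $\Rightarrow$ weakly approximable'' is immediate from the definitions, so the content is the converse, and the plan is to prove it by a diagonal argument that exploits the fact that a map $\di\in\cJ_H(\cK^n_n)$ is essentially pinned down by its single real-variable contraction $\varphi_\di$ of \eqref{ball}. In outline: weak approximability supplies, for each individual convex body, an approximating sequence in $\cJ'$; Lemma~\ref{lem311} converts $L^1$-convergence of bodies into uniform convergence of the associated contractions on an interval whose length I can control by the choice of body; diagonalizing over a sequence of bodies with intervals growing to $\R$ produces one universal sequence whose contractions converge pointwise to $\varphi_\di$; and Lemma~\ref{lem12024}(ii), together with essential uniqueness, upgrades this to $L^1$-convergence of the maps themselves to $\di$.

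\textbf{Construction.} First I would reduce, via a rigid motion of $\R^n$ (under which $\cK^n_n$, the class $\cJ_H$ with $H$ replaced by its image, Lebesgue measure, and hence all notions of approximation are equivariant), to the case $H=e_n^\perp$ treated in Lemma~\ref{lem311}. For each $m\in\N$, take $K_m$ to be the parallelepiped exhibited after Lemma~\ref{lem311}, scaled so that $\{t_x:x\in K_m|H\}=[-m,m]$, where $x+t_xe_n$ is the midpoint of $K_m\cap(H^\perp+x)$; thus $K_m$ satisfies \eqref{eq51} with $s_1=-m$, $s_2=m$. Weak approximability of $\di$ on $\cK^n_n$ by $\cJ'$ gives a sequence $(\di_{K_m,k})_{k\in\N}$ in $\cJ'$ with $\|1_{\di_{K_m,k}K_m}-1_{\di K_m}\|_1\to 0$ as $k\to\infty$, and Lemma~\ref{lem311} then forces the associated contractions to satisfy $\varphi_{\di_{K_m,k}}\to\varphi_\di$ uniformly on $[-m,m]$ as $k\to\infty$. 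Now choose $k_m$ with $\sup_{t\in[-m,m]}|\varphi_{\di_{K_m,k_m}}(t)-\varphi_\di(t)|<1/m$ and set $\di_m:=\di_{K_m,k_m}\in\cJ'$; for any fixed $t\in\R$ and $m\ge|t|$ one has $|\varphi_{\di_m}(t)-\varphi_\di(t)|<1/m$, so $\varphi_{\di_m}\to\varphi_\di$ pointwise on $\R$. Since each $\di_m\in\cJ_H(\cK^n_n)$, Lemma~\ref{lem12024}(ii) produces an essentially unique map in $\cJ_H(\cK^n_n)$ with associated contraction $\varphi_\di$ to which $(\di_m)$ converges; as $\di$ itself lies in $\cJ_H(\cK^n_n)$ and has associated contraction $\varphi_\di$ (Lemma~\ref{lemm5}), that limit equals $\di$ essentially, so $\di_m\to\di$. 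Hence the sequence $(\di_m)\subset\cJ'$ witnesses that $\di$ is approximable on $\cK^n_n$ by maps in $\cJ'$.

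\textbf{Main obstacle.} I do not expect a serious obstacle here, since Lemmas~\ref{lem311} and~\ref{lem12024} already carry out the two hard conversions (between $L^1$-convergence of bodies and uniform convergence of contractions, and between pointwise convergence of contractions and $L^1$-convergence of maps). The points needing care are the order of quantifiers in the diagonal extraction — one body $K_m$ per interval $[-m,m]$, then one term $\di_m$ extracted from the $m$-th sequence — and the appeal to essential uniqueness in Lemma~\ref{lem12024}(ii) to identify the limit with $\di$; the rigid-motion reduction to $H=e_n^\perp$ is routine but worth stating explicitly because Lemma~\ref{lem311} is phrased only for that hyperplane.
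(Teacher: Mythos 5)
Your proposal is correct and follows essentially the same route as the paper's proof: choose bodies $K_m$ satisfying \eqref{eq51} with $[s_1,s_2]=[-m,m]$, use weak approximability and Lemma~\ref{lem311} to get uniform convergence of the contractions on $[-m,m]$, diagonalize to produce a single sequence with $\varphi_{\di_m}\to\varphi_\di$ pointwise, and invoke Lemma~\ref{lem12024}(ii) to conclude $\di_m\to\di$. The only cosmetic difference is that you make the rigid-motion reduction to $H=e_n^\perp$ explicit (which the paper leaves implicit), a reasonable clarification since Lemma~\ref{lem311} is stated only for that hyperplane.
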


\begin{proof}
Suppose that $\di\in \cJ_H(\cK_n^n)$ is weakly approximable on ${\cK}^n_n$ by maps in $\cJ'$. Let $K_m$, $m\in \N$, satisfy the hypotheses of Lemma~\ref{lem311} with $s_1=-m$ and $s_2=m$. Then there are $\di_{K_m,k}\in \cJ'$, $k\in \N$, such that \eqref{eq3111} holds with $K$ replaced by $K_m$.  By Lemma~\ref{lem311}, $\varphi_{K_m,k}\to \varphi$ uniformly on $[m,m]$ as $k\to\infty$. Hence, there is a $k_m\in\N$ such that $|\varphi_{K_m,k}(x)- \varphi(x)|\le 1/m$ for all $x\in [-m,m]$ and  $k\ge k_m$.  It follows that the maps $\varphi_m=\varphi_{K_m,k_m}\to\varphi$ pointwise on $\R$ as $m\to\infty$. Lemma~\ref{lem12024}(ii) now implies that $\di_m=\di_{K_m,k_m}\to \di$ as $m\to\infty$, so $\di$ is approximable on ${\cK}^n_n$ by maps in $\cJ'$.  The converse is obvious.
\end{proof}

We shall write $I(a,b)$ for the open interval in $\R$ with endpoints $a$ and $b$, where $a\ne b$, and  $I(a,a)=\emptyset$. Let $\cI$ be the class of contractions $\varphi:\R\to \R$ such that whenever $a<b$ and $\varphi((a,b))\subset I(\varphi(a),\varphi(b))$, the restriction $\varphi|_{(a,b)}$ is an affine function on $(a,b)$ with slope $\pm 1$. More formally, $\varphi\in\cI$ if whenever $a<b$ and
\begin{equation}\label{I1m}
	\varphi(s)\in I(\varphi(a),\varphi(b))
	\ \ \ \forall s\in (a,b),
\end{equation}
there is a $\delta\in \{-1,1\}$ such that
\[
\varphi(s)=\delta(s-a)+\varphi(a)\ \ \ \forall s\in (a,b).
\]
Since $\varphi$ is a contraction, the existence of $\delta\in \{-1,1\}$ such that the last displayed formula holds is equivalent to
$$
|\varphi(b)-\varphi(a)| = b-a.
$$

A contraction $\varphi$ may be piecewise linear, with each piece having slope $\pm 1$, and yet not belong to $\cI$. For example, suppose that $\varphi(t)=t$, if $t<0$, $\varphi(t)=-t$, if $0\le t\le 1$, and $\varphi(t)=t-2$, if $t>1$.  Then
$$-2=\varphi(-2)<\varphi(s)<\varphi(3)=1$$
for $-2<s<3$, but $\varphi$ is not affine on $(-2,3)$.

A class of real-valued functions on $\R$ is called \emph{bi-reflection invariant} if it contains $-\varphi$ and $s\mapsto \varphi(-s)$ whenever $\varphi$  is in this class. It is called
\emph{bi-translation invariant} if it contains $\varphi(\cdot+s_0)$ and $\varphi(\cdot)+s_0$ for all $s_0\in \R$ whenever $\varphi$ is in this class.

\begin{lem}\label{restate}
The family $\cI$ has the following properties.

\noindent{\rm{(i)}} ${\rm{Id}}\in \cI$, $(t\mapsto |t|)\in \cI$.

\noindent{\rm{(ii)}} $\cI$ is bi-reflection invariant.

\noindent{\rm{(iii)}} $\cI$ is bi-translation invariant.

\noindent{\rm{(iv)}} $\cI$ is closed under composition.

\noindent{\rm{(v)}} $\cI$ is closed with respect to pointwise convergence.
\end{lem}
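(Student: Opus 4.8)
\emph{Proof plan.} I would prove (i)--(v) in this order, using the earlier items in the later ones. Items (i)--(iii) are bookkeeping: for $\Id$ the conclusion $|\varphi(b)-\varphi(a)|=b-a$ of the defining implication holds for \emph{every} $a<b$, so $\Id\in\cI$; for $t\mapsto|t|$ one checks the three cases $0\le a<b$, $a<b\le 0$, and $a<0<b$, noting that in the last case $s=0$ already violates \eqref{I1m} (since $0\notin I(|a|,|b|)$), so there is nothing to prove. For (ii) and (iii), if $\psi$ arises from $\varphi\in\cI$ by reflecting or translating the domain and/or the range, a change of variable turns any pair $a<b$ witnessing \eqref{I1m} for $\psi$ into a pair witnessing \eqref{I1m} for $\varphi$ (the two endpoints may get interchanged, which is harmless since $I(\cdot,\cdot)$ is symmetric), and the required conclusion transfers back because $|\cdot|$ is invariant under reflections and translations of either variable.

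For (iv), let $\varphi,\psi\in\cI$ and $\chi=\psi\circ\varphi$ (a contraction), and suppose $\chi$ satisfies \eqref{I1m} on some interval $(a,b)$; I must show $|\chi(b)-\chi(a)|=b-a$. First $\chi(a)\ne\chi(b)$ (otherwise $I(\chi(a),\chi(b))=\emptyset$ and \eqref{I1m} cannot hold on the nonempty interval $(a,b)$), hence $\varphi(a)\ne\varphi(b)$. The key observation is that $\varphi$ itself satisfies \eqref{I1m} on $(a,b)$: if some $s_0\in(a,b)$ had $\varphi(s_0)\notin I(\varphi(a),\varphi(b))$, the intermediate value theorem applied to $\varphi$ on $[a,s_0]$ or on $[s_0,b]$ would produce an $s_1\in(a,b)$ with $\varphi(s_1)\in\{\varphi(a),\varphi(b)\}$, hence $\chi(s_1)\in\{\chi(a),\chi(b)\}$, contradicting \eqref{I1m} for $\chi$. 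Since $\varphi\in\cI$, it follows that $\varphi|_{[a,b]}$ is affine of slope $\pm1$, so $\varphi$ maps $(a,b)$ bijectively onto $I(\varphi(a),\varphi(b))$; consequently $\psi$ maps $I(\varphi(a),\varphi(b))$ into $I(\chi(a),\chi(b))$, and applying $\psi\in\cI$ with the pair $\{\varphi(a),\varphi(b)\}$ gives $|\psi(\varphi(b))-\psi(\varphi(a))|=|\varphi(b)-\varphi(a)|=b-a$, that is, $|\chi(b)-\chi(a)|=b-a$. (The case $\varphi(a)>\varphi(b)$ reduces to $\varphi(a)<\varphi(b)$ by replacing $\varphi,\psi$ with $-\varphi$ and $\psi(-\,\cdot\,)$, which lie in $\cI$ by (ii) and leave $\chi$ unchanged.) Thus $\cI$ is closed under composition.

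For (v), Lemma~\ref{lem11824} (with $C$ a bounded closed interval, a body in $\R$) upgrades the pointwise convergence $\varphi_k\to\varphi$ to uniform convergence on every bounded interval, and $\varphi$ is a contraction as a pointwise limit of contractions. Fix $a<b$ with $\varphi$ satisfying \eqref{I1m}. The cases $\varphi(a)=\varphi(b)$ (then \eqref{I1m} cannot hold) and $\varphi(a)>\varphi(b)$ (apply the argument to $-\varphi=\lim(-\varphi_k)$, using (ii)) are trivial, so assume $\varphi(a)<\varphi(b)$; then $\varphi(a)<\varphi(s)<\varphi(b)$ for all $s\in(a,b)$, so on $[a,b]$ the function $\varphi$ attains the value $\varphi(a)$ only at $a$ and the value $\varphi(b)$ only at $b$. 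Transferring \eqref{I1m} from $\varphi$ directly to $\varphi_k$ fails near $a$ and $b$ (where $\varphi_k$ may overshoot $\varphi(a)$ or $\varphi(b)$), so instead I build the interval from $\varphi_k$ itself: for $k$ large enough that $\varphi_k(a)<\varphi_k(b)$, set
\[
a_k=\max\{s\in[a,b]:\varphi_k(s)=\varphi_k(a)\},\qquad b_k=\min\{s\in[a_k,b]:\varphi_k(s)=\varphi_k(b)\}.
\]
Then $a_k<b_k$, $\varphi_k(a_k)=\varphi_k(a)$, $\varphi_k(b_k)=\varphi_k(b)$, and, since $\varphi_k$ does not return to $\varphi_k(a)$ after $a_k$, does not reach $\varphi_k(b)$ before $b_k$, and $\varphi_k(a)<\varphi_k(b)$, one gets $\varphi_k(a)<\varphi_k(s)<\varphi_k(b)$ for all $s\in(a_k,b_k)$; hence $\varphi_k$ satisfies \eqref{I1m} on $(a_k,b_k)$, and $\varphi_k\in\cI$ yields $\varphi_k(b)-\varphi_k(a)=b_k-a_k$. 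Passing to a subsequence along which $a_k\to\alpha$ and $b_k\to\beta$ in $[a,b]$, uniform convergence gives $\varphi(\alpha)=\lim\varphi_k(a_k)=\lim\varphi_k(a)=\varphi(a)$, so $\alpha=a$, and similarly $\beta=b$; therefore $\varphi(b)-\varphi(a)=\lim(\varphi_k(b)-\varphi_k(a))=\lim(b_k-a_k)=b-a$.

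The step I expect to be the main obstacle is (v): the defining property of $\cI$ does not propagate from $\varphi$ to the approximants $\varphi_k$ — both because of endpoint overshoot and because $\varphi$ could a priori oscillate within $(a,b)$ — and the device of choosing $[a_k,b_k]$ as the $\varphi_k$-intrinsic ``last return to $\varphi_k(a)$, then first arrival at $\varphi_k(b)$'' interval is precisely what circumvents this. Item (iv) rests on the small intermediate value observation above; everything else is routine.
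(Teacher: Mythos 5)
Your proof is correct, and items (i)--(iv) follow the same line as the paper: (i)--(iii) are routine, and for (iv) the key step in both arguments is to use the intermediate value theorem to propagate \eqref{I1m} from the composition to the inner map, after which one composes affine pieces of slope $\pm1$. Your (iv) is slightly cleaner, since the contradiction is obtained directly from $\chi(s_1)=\chi(a)$ (or $\chi(b)$) without first having to establish that the outer map is affine on the intermediate interval, as the paper does.

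For (v), your argument shares the paper's main device --- pass to uniform convergence via Lemma~\ref{lem11824}, then extract from each $\varphi_k$ an intrinsic interval $(a_k,b_k)$ on which $\varphi_k$ satisfies \eqref{I1m}, apply the $\cI$-property to get $\varphi_k(b)-\varphi_k(a)=b_k-a_k$, and pass to the limit --- but you implement it differently. The paper first shrinks $(a,b)$ to an interval $I_\varepsilon=(a_\varepsilon,b_\varepsilon)$ on which $\varphi$ is pinched between $\varphi(a)+2\varepsilon$ and $\varphi(b)-2\varepsilon$, chooses $a_k\in[a,a_\varepsilon]$ and $b_k\in[b_\varepsilon,b]$, concludes $\varphi_k$ is affine of slope $1$ on $I_\varepsilon$, and finally sends $\varepsilon\to0$. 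You instead take $a_k$ and $b_k$ over all of $[a,b]$ (last level-set hit of $\varphi_k(a)$, then first hit of $\varphi_k(b)$), so no $\varepsilon$-parameter is needed, and you pin the limit by observing that under \eqref{I1m} the level set $\{s\in[a,b]:\varphi(s)=\varphi(a)\}$ is $\{a\}$ alone (so $a_k\to a$, and similarly $b_k\to b$). This is a genuine simplification: it bypasses the delicate verification that $\varphi_k$ stays in $(\varphi_k(a_k),\varphi_k(b_k))$ on the fringe pieces $(a_k,a_\varepsilon]$ and $[b_\varepsilon,b_k)$, a point that the paper passes over quickly and that, as far as I can see, requires more care than the paper gives (the choice of $a_\varepsilon,b_\varepsilon$ controls $\varphi$ from one side only near each endpoint). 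Your version is thus both shorter and more robust, while resting on the same ideas.
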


\begin{proof}
(i), (ii) and (iii) are obvious.

To show (iv), let $\varphi_1,\varphi_2\in \cI$ and let $\varphi=\varphi_2\circ \varphi_1$. Suppose that $a<b$ are such that \eqref{I1m} holds. For brevity, we define $\tilde a=\varphi_1(a)$ and $\tilde b=\varphi_1(b)$. If $r\in I(\tilde a, \tilde b)$, then by the intermediate value theorem applied to $\varphi_1$, there is an $s\in (a,b)$ such that $\varphi_1(s)=r$ and hence $\varphi_2(r)=\varphi_2(\varphi_1(s))\in I(\varphi_2(\tilde a),\varphi_2(\tilde b))$.  Since $\varphi_2\in \cI$, this shows that $\varphi_2(s)=\delta s+c$
for all $s$ in the closure of $I(\tilde a, \tilde b)$, where $c\in \R$ and $\delta\in \{-1,1\}$.

Suppose that there is an $s_1\in (a,b)$ such that  $\varphi_1(s_1)\not\in I(\tilde a, \tilde b)$. Then by the intermediate value theorem again, there is an $s_2\in (a,b)$ with $\varphi_1(s_2)\in \{\tilde a, \tilde b\}$. We only consider the case when $\varphi_1(s_2)=\tilde a$, as the case $\varphi_1(s_2)=\tilde b$ is treated in a very similar way. We obtain
\[
\varphi(s_2)= \delta \varphi_1(s_2)+c=\delta \varphi_1(a)+c=\varphi(a)\not\in I(\varphi(a),\varphi(b)),
\]
contradicting \eqref{I1m}. Thus \eqref{I1m} holds with $\varphi$ replaced by $\varphi_1$.  Since $\varphi_1\in \cI$, its restriction to $(a,b)$ must be an affine function with slope $\pm 1$. The composition of two such functions is again an affine function with slope $\pm 1$ and (iv) follows.

To prove (v), let $(\varphi_k)$ be a sequence in $\cI$ with pointwise limit $\varphi$. Clearly $\varphi$ is a contraction. Consider $a<b$ such that \eqref{I1m} holds. We may assume without loss of generality that $\varphi(a)<\varphi(b)$. For $\varepsilon\in (0,(\varphi(b)-\varphi(a))/4)$, let $I_{\ee}=(a_{\ee},b_{\ee})$, where $a<a_\ee<b_\ee<b$ are such that $\phi(a_\ee)=\phi(a)+2\ee$, $\phi(b_\ee)=\phi(b)-2\ee$, and $\phi(s)\in(\phi(a)+2\ee, \phi(b)-2\ee)$ for all $s\in I_\ee$.  By Lemma~\ref{lem11824}, there is a $k_0\in \N$ such that
\[
|\varphi_k(s)-\varphi(s)|\le \varepsilon
\]
for all $k\ge k_0$ and $s\in (a,b)$. Suppose that $k\ge k_0$.  Then for $s\in I_\ee$, we have
\[
\varphi_k(s)-\varphi_k(a)\ge (\varphi(s)-\varepsilon)  -(\varphi(a)+\varepsilon)= \varphi(s)-(\varphi(a)+2\varepsilon)> 0
\]
and similarly $\varphi_k(b)-\varphi_k(s)>0$, so $\varphi_k: I_\ee \to (\varphi_k(a),\varphi_k(b))$. Let
\[
a_k=\max\{s\in [a,a_\ee]:\varphi_k(s)=\varphi_k(a)\}~~\quad{\text{and}}~~\quad
b_k=\min\{s\in [b_\ee,b]:\varphi_k(s)=\varphi_k(b)\}).
\]
Then $a_k\in (a,a_{\ee})$, $b_k\in (b_{\ee},b)$, and
$\varphi_k: (a_k,b_k)\to(\varphi_k(a_k),\varphi_k(b_k))$. Since $\varphi_k\in \cI$, $\varphi_k$ is affine with slope $1$ on $(a_k,b_k)$ and hence also on the smaller interval $I_{\ee}$.
Therefore $\varphi$ is also affine with slope $1$ on $I_\ee$. As $a_\ee\to a$ and $b_\ee\to b$ as $\ee\to 0$, $\varphi$ is affine with slope $1$ on $[a,b]$, so $\varphi\in\cI$.
\end{proof}

Recall the formula for $\varphi_{P_H}$ for a polarization $P_H$, given in Example~\ref{ex310}(ii).  This and parts (i)--(iv) of the previous lemma immediately yield the following result.

\begin{cor}\label{cor313}
Let $H$ be a hyperplane in $\R^n$.  If $\di$ is a finite composition of polarizations with respect to hyperplanes parallel to $H$, then $\varphi_{\di}\in\cI$.
\end{cor}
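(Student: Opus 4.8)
The statement should follow almost mechanically from Example~\ref{ex310} and Lemma~\ref{restate}, so the plan is just to assemble those ingredients in the right order. First I would establish that associated contractions behave functorially under composition: if $\di_1,\di_2$ are monotonic, map balls to balls, and respect $H$-cylinders, then so does $\di_2\circ\di_1$, and applying \eqref{ball} twice gives
$$(\di_2\circ\di_1)B(x+tu,r)=\di_2B\big(x+\varphi_{\di_1}(t)u,r\big)=B\big(x+\varphi_{\di_2}(\varphi_{\di_1}(t))u,r\big),$$
whence $\varphi_{\di_2\circ\di_1}=\varphi_{\di_2}\circ\varphi_{\di_1}$. Each polarization with respect to a hyperplane $H'=u^{\perp}+t_1u$ parallel to $H$ is monotonic, maps balls to balls, and respects $H$-cylinders — the last because reflection in $H'$ preserves the $u^{\perp}$-component of a point and hence maps every infinite cylinder with axis in the direction $u$ onto itself — so if $\di=P_m\circ\cdots\circ P_1$ is a finite composition of such polarizations, then $\di$ lies in the scope of Lemma~\ref{lemm5} and $\varphi_{\di}=\varphi_{P_m}\circ\cdots\circ\varphi_{P_1}$.

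Next I would record $\varphi_{P_{H'}}$ for a single polarization in $H'=u^{\perp}+t_1u$. By Example~\ref{ex310}(ii), the polarization oriented in the direction $u$ has $\varphi_{P_{H'}}(t)=|t-t_1|+t_1$; by the note there, the oppositely oriented polarization is $R_{H'}$ composed with this one, so by the composition rule above together with $\varphi_{R_{H'}}(t)=2t_1-t$ from Example~\ref{ex310}(i), its contraction is $t\mapsto 2t_1-(|t-t_1|+t_1)=t_1-|t-t_1|$. Thus in either case $\varphi_{P_{H'}}$ has the form $t\mapsto\delta|t-t_1|+t_1$ with $\delta\in\{-1,1\}$ and $t_1\in\R$.

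Then I would check membership in $\cI$ by a short chain of appeals to Lemma~\ref{restate}: the map $t\mapsto|t|$ is in $\cI$ by part~(i); if $\delta=-1$, bi-reflection invariance (part~(ii)) promotes this to $t\mapsto-|t|$; and bi-translation invariance (part~(iii)), used first to precompose with the shift $t\mapsto t-t_1$ and then to add the constant $t_1$, yields $t\mapsto\delta|t-t_1|+t_1\in\cI$. Finally, since $\varphi_{\di}=\varphi_{P_m}\circ\cdots\circ\varphi_{P_1}$ with every factor in $\cI$ and $\cI$ is closed under composition by Lemma~\ref{restate}(iv), we conclude $\varphi_{\di}\in\cI$. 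I do not expect a genuine obstacle here: the argument is pure bookkeeping, and the only points that merit an explicit sentence are handling the two possible orientations of each polarizing hyperplane (absorbed into the sign $\delta$) and checking that the composition still respects $H$-cylinders, so that $\varphi_{\di}$ is defined via \eqref{ball} in the first place.
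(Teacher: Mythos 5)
Your proof is correct and follows the same route the paper intends: it invokes the formula for $\varphi_{P_H}$ from Example~\ref{ex310}(ii) together with parts (i)--(iv) of Lemma~\ref{restate}. The only thing you have made explicit that the paper leaves tacit is the functoriality $\varphi_{\di_2\circ\di_1}=\varphi_{\di_2}\circ\varphi_{\di_1}$ and the case of the opposite orientation (via Example~\ref{ex310}(i) and (ii)), both of which are indeed the small bookkeeping steps needed to make ``immediately yield'' precise.
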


The family $\cI$ is not closed under taking maxima and minima.  For example, if $f_1(s)=\max\{0,s\}$ and $f_2(s)=-\max\{0,-s\}+1$, then $f_1,f_2\in\mathcal{I}$ but $\max\{f_1,f_2\}\not\in\cI$ since it is constant on $(0,1)$ and strictly increasing on $(-1,0)\cup(1,\infty)$.

\begin{thm}\label{thmgpPrime}
Let $H$ be a hyperplane in $\R^n$ and let $\di\in \cJ_H(\cK_n^n)$.  If $\varphi_{\di}\not\in \cI$, then $\di$ cannot be weakly approximated on ${\cK}^n_n$ by finite compositions of polarizations with respect to hyperplanes parallel to $H$.
\end{thm}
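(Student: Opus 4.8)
The plan is to prove the contrapositive: if $\di\in\cJ_H(\cK_n^n)$ can be weakly approximated on $\cK_n^n$ by finite compositions of polarizations with respect to (oriented) hyperplanes parallel to $H$, then $\varphi_\di\in\cI$. The whole argument will be an assembly of the results developed earlier in this section. First I would introduce the class $\cJ'$ consisting of the restrictions to $\cK_n^n$ of all finite compositions of polarizations with respect to hyperplanes parallel to $H$, and verify that $\cJ'\subset\cJ_H(\cK_n^n)$. Each polarization $P_{H'}$ with $H'$ parallel to $H$ is monotonic, measure preserving, and maps balls to balls (cf.\ Example~\ref{ex310}(ii)); moreover it respects $H$-cylinders, because the reflection in $H'$ alters only the coordinate along the common normal direction $u$ of $H$ and $H'$ and hence maps every infinite cylinder with axis parallel to $u$ onto itself, so that \eqref{pol2} shows $P_{H'}$ leaves such a cylinder invariant. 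These four properties obviously pass to finite compositions, and restricting the domain to $\cK_n^n$ then gives $\cJ'\subset\cJ_H(\cK_n^n)$.

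With this in place, the core of the proof is short. Since $\di\in\cJ_H(\cK_n^n)$ and $\cJ'\subset\cJ_H(\cK_n^n)$, Lemma~\ref{lem1} upgrades the assumed weak approximability of $\di$ on $\cK_n^n$ by maps in $\cJ'$ to genuine approximability: there is a sequence $(\di_k)$ in $\cJ'$ with $\di_k\to\di$. Lemma~\ref{lem12024}(i) then yields pointwise convergence of the associated contractions, $\varphi_{\di_k}\to\varphi_\di$ on $\R$. By Corollary~\ref{cor313} each $\varphi_{\di_k}$ belongs to $\cI$, and since $\cI$ is closed under pointwise convergence by Lemma~\ref{restate}(v), it follows that $\varphi_\di\in\cI$. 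This establishes the contrapositive, and the theorem follows.

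Given the preceding lemmas, there is no genuinely deep obstacle remaining; the one point that needs a little care is the inclusion $\cJ'\subset\cJ_H(\cK_n^n)$ — in particular, that a polarization in a hyperplane parallel to $H$ respects $H$-cylinders and that this property, together with monotonicity, measure preservation, and mapping balls to balls, survives finite composition — since it is exactly this that makes Lemma~\ref{lem1} and Lemma~\ref{lem12024}(i) applicable. If one wished to bypass Lemma~\ref{lem1}, one could instead feed the weakly approximating sequences directly into Lemma~\ref{lem311}, using convex bodies $K$ that satisfy \eqref{eq51} on $[-m,m]$ to obtain uniform convergence $\varphi_{\di_{K,k}}\to\varphi_\di$ on $[-m,m]$, and then let $m\to\infty$ and apply Lemma~\ref{restate}(v); but routing through Lemma~\ref{lem1} is cleaner.
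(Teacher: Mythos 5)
Your argument is correct and follows essentially the same route as the paper: pass from weak approximability to approximability via Lemma~\ref{lem1}, obtain pointwise convergence $\varphi_{\di_k}\to\varphi_\di$ from Lemma~\ref{lem12024}(i), and conclude with Corollary~\ref{cor313} and Lemma~\ref{restate}(v). The only difference is that you spell out the (routine) inclusion $\cJ'\subset\cJ_H(\cK_n^n)$, which the paper leaves implicit.
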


\begin{proof}
If $\di$ can be weakly approximated on ${\cK}^n_n$ by finite compositions of polarizations with respect to hyperplanes parallel to $H$, then by Lemma~\ref{lem1}, there are such maps $\di_k$, $k\in \N$, such that $\di_k\to \di$ as $k\to\infty$.  By Lemma~\ref{lem12024}(i), $\varphi_{\di_k}\to \varphi_{\di}$ pointwise as $k\to \infty$.  The associated contractions $\varphi_{\di_k}$ from \eqref{ball} belong to $\cI$ by Corollary~\ref{cor313}, so by Lemma~\ref{restate}(v), $\varphi_{\di}\in\cI$, a contradiction.
\end{proof}

\begin{lem}\label{lem314}
Let $\varphi:\R\to\R$ be a contraction such that there exists a point where $\varphi'$ exists and is different from 0, 1, and -1.  Then $\varphi\not\in\cI$.
\end{lem}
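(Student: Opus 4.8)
The plan is to argue by contradiction: assume $\varphi\in\cI$ while $\varphi'(x_0)$ exists and lies outside $\{0,1,-1\}$, and then exhibit an interval $(a,b)$ containing $x_0$ on which the defining property of $\cI$ applies and forces $\varphi|_{(a,b)}$ to be affine with slope $\pm1$; this contradicts $\varphi'(x_0)\notin\{1,-1\}$.

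First I would normalize. Since $\varphi$ is a contraction, $|\varphi'(x_0)|\le1$, hence in fact $0<|\varphi'(x_0)|<1$. Replacing $\varphi$ by $s\mapsto \varphi(s+x_0)-\varphi(x_0)$ if $\varphi'(x_0)>0$, and by $s\mapsto -\big(\varphi(s+x_0)-\varphi(x_0)\big)$ if $\varphi'(x_0)<0$ — a map that still lies in $\cI$ by the bi-translation and bi-reflection invariance of $\cI$ (Lemma~\ref{restate}(ii),(iii)) — I may assume $x_0=0$, $\varphi(0)=0$, and $c:=\varphi'(0)\in(0,1)$. From the definition of the derivative at $0$ there is then some $b_0>0$ with $\varphi(s)>0$ for $s\in(0,b_0]$ and $\varphi(s)<0$ for $s\in[-b_0,0)$; in particular $\varphi(s)$ has the same sign as $s$ throughout $[-b_0,b_0]$.

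The core step is the construction of the "trapping" interval $(a,b)$ with $a<0<b$ on which $\varphi$ stays strictly between $\varphi(a)$ and $\varphi(b)$. On the compact interval $[0,b_0]$ the continuous function $\varphi$ attains a maximum, which is positive (it is at least $\varphi(b_0)>0$); let $b$ be the smallest point of $[0,b_0]$ at which this maximum is attained. Then $b>0$ (since $\varphi(0)=0$ is not maximal), and by minimality of $b$ we get $\varphi(s)<\varphi(b)$ for all $s\in[0,b)$. Symmetrically, let $a$ be the largest point of $[-b_0,0]$ at which $\varphi$ attains its (negative) minimum over $[-b_0,0]$; then $a<0$ and $\varphi(s)>\varphi(a)$ for all $s\in(a,0]$. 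I would then check, for every $s\in(a,b)$, the full bracketing $\varphi(a)<\varphi(s)<\varphi(b)$ by splitting into the cases $s\in(a,0]$ and $s\in[0,b)$: on $(a,0]$ one has $\varphi(a)<\varphi(s)$ from the line above and $\varphi(s)\le0<\varphi(b)$ from the sign information, while on $[0,b)$ one has $\varphi(s)<\varphi(b)$ together with $\varphi(a)<0\le\varphi(s)$. Since moreover $\varphi(a)<0<\varphi(b)$, we have $I(\varphi(a),\varphi(b))=(\varphi(a),\varphi(b))$, so condition \eqref{I1m} holds for this pair $a<b$.

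Finally, because $\varphi\in\cI$, condition \eqref{I1m} forces $\varphi$ to agree on $(a,b)$ with an affine function of slope $\pm1$; as $0\in(a,b)$, this yields $\varphi'(0)=\pm1$, contradicting $c\in(0,1)$. The step requiring the most care — and the only real obstacle — is the choice of $a$ and $b$: they must be taken at the extreme maximizing/minimizing points (left-most maximizer of $\varphi$ on $[0,b_0]$, right-most minimizer on $[-b_0,0]$) precisely so that the endpoint inequalities become \emph{strict}, which is exactly what the open interval $I(\varphi(a),\varphi(b))$ in the definition of $\cI$ requires; the remainder is routine continuity and bookkeeping of signs.
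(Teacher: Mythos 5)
Your proof is correct and follows essentially the same approach as the paper's: both select the extreme minimizer and maximizer on a small neighborhood of the differentiability point (rightmost minimizer, leftmost maximizer), which yields the strict endpoint inequalities needed to invoke the defining property of $\cI$ and thereby force $\varphi$ to be affine with slope $\pm 1$ through that point. The only stylistic difference is that you normalize to $x_0=0$, $\varphi(0)=0$ and use the resulting sign of $\varphi$ near the origin to verify the trapping condition, whereas the paper works without normalizing and instead sandwiches the graph between lines of slopes $\varphi'(s_0)\pm a$ to bound the average slope on $(s_1,s_2)$ strictly between $0$ and $1$.
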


\begin{proof}
Suppose that $\varphi'(s_0)$ exists and is different from 0, 1, and -1.
Since $\varphi$ is a contraction, we have $-1<\varphi'(s_0)<0$ or $0<\varphi'(s_0)<1$.  We shall assume the latter holds, since the former can be dealt with similarly. Then there is an $a>0$ satisfying
$$a<\min\{\varphi'(s_0), 1-\varphi'(s_0)\}$$
and $r_1<s_0<r_2$ such that on $[r_1,r_2]$, the graph of $\varphi$ lies between the lines $t=f_{\pm}(s)$ through $(s_0, \varphi(s_0))$ with slopes $\varphi'(s_0)\pm a$.  Let
$$
s_1=\sup\{r\in[r_1,r_2] : \varphi(r)=\min\{\varphi(s): r_1\leq s \leq r_2\}\}
$$
and
$$
s_2=\inf\{r\in[r_1,r_2] : \varphi(r)=\max\{\varphi(s): r_1\leq s \leq r_2\}\}.
$$
Then  $\varphi(s_1)<\varphi(s)<\varphi(s_2)$ for all $s\in (s_1,s_2)$. We also have
$$f_{+}(s_1)\le \varphi(s_1)\le f_-(s_1)~~\quad{\text{and}}~~\quad f_{-}(s_2)\le \varphi(s_2)\le f_+(s_2),$$
so
$$0<\varphi'(s_0)-a\le \frac{f_-(s_2)-f_-(s_1)}{s_2-s_1}\le
\frac{\varphi(s_2)-\varphi(s_1)}{s_2-s_1}\le
\frac{f_+(s_2)-f_+(s_1)}{s_2-s_1}=\varphi'(s_0)+a<1.$$
This shows that $\varphi$ is not affine with slope 1 on $(s_1,s_2)$.
\end{proof}

\begin{cor}\label{cor314}
If $H$ is a hyperplane in $\R^n$, the Brock set map $\di_{B_H}$ with parameter $0<b<1$ cannot be weakly approximated on ${\cK}^n_n$ by finite compositions of polarizations with respect to hyperplanes parallel to $H$.
\end{cor}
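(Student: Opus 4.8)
The plan is to obtain this as an immediate consequence of Lemma~\ref{lem314} and Theorem~\ref{thmgpPrime}. First I would verify that $\di_{B_H}$ belongs to $\cJ_H(\cK^n_n)$, the class to which Theorem~\ref{thmgpPrime} applies. By Example~\ref{ex429}(v), the Brock set map with parameter $0\le b\le 1$ is monotonic, measure preserving, and smoothing from $\cL^n$ to $\cL^n$; restricting to convex bodies it maps balls to balls, and from the explicit formula $\psi_{B_H}(x)=x+(b-1)(\langle x,u\rangle-t_0)u$ recorded in Example~\ref{ex310}(v) — which is of the form \eqref{eq:omregn}, being constant on hyperplanes parallel to $H$ in the $u^{\perp}$-component — one reads off that $\di_{B_H}$ respects $H$-cylinders (alternatively, this is visible directly from the fiber-wise geometric description in Example~\ref{ex429}(v)). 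Hence $\di_{B_H}\in\cJ_H(\cK^n_n)$, with associated contraction from \eqref{ball} given by $\varphi_{B_H}(t)=b(t-t_0)+t_0$.

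Next I would observe that $\varphi_{B_H}$ is affine with slope $b$, so $\varphi_{B_H}'(t)=b$ at every $t\in\R$. Since $0<b<1$, the derivative exists (everywhere) and is different from $0$, $1$, and $-1$, so Lemma~\ref{lem314} gives $\varphi_{B_H}\notin\cI$. Applying Theorem~\ref{thmgpPrime} with $\di=\di_{B_H}$ then yields that $\di_{B_H}$ cannot be weakly approximated on $\cK^n_n$ by finite compositions of polarizations with respect to hyperplanes parallel to $H$, which is the assertion.

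I expect no real obstacle here: the corollary is simply a packaging of the preceding results, and the only point that needs a sentence of justification is the membership $\di_{B_H}\in\cJ_H(\cK^n_n)$ — in particular that the Brock set map respects $H$-cylinders — which follows at once from the form of $\psi_{B_H}$ in Example~\ref{ex310}(v).
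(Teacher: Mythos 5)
Your proof is correct and follows exactly the paper's route: compute $\varphi_{\di_{B_H}}'(t)=b\in(0,1)$, invoke Lemma~\ref{lem314} to get $\varphi_{\di_{B_H}}\notin\cI$, and conclude via Theorem~\ref{thmgpPrime}. The only difference is that you spell out the (routine) verification that $\di_{B_H}\in\cJ_H(\cK^n_n)$, which the paper leaves implicit.
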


\begin{proof}
Since $0<\varphi_{\di_{B_H}}'(s)=b<1$, this follows immediately from Theorem~\ref{thmgpPrime} and Lemma~\ref{lem314}.
\end{proof}

\section{Approximation by arbitrary polarizations}\label{arb}

In the previous section, we focused on contractions $\varphi$ associated with maps that respect $H$-cylinders.  Here we consider the contractions $\psi$ introduced in Lemmas~\ref{lem11} and~\ref{lem11_inependent}.

\begin{thm}\label{lem5}
Let $\psi:\R^n\to\R^n$ be a contraction that can be approximated (pointwise) on $B^n$ by finite compositions of contractions associated with polarizations. If $\psi|_{B^n}$ is injective, then ${\cH}^n(\psi(B^n))=\kappa_n={\cH}^n(B^n)$.
\end{thm}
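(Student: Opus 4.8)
My plan is to sandwich $\cH^n(\psi(B^n))$ between $\kappa_n$ from above and from below. The upper bound $\cH^n(\psi(B^n))\le\kappa_n$ is immediate, since $\psi$ is $1$-Lipschitz and a $1$-Lipschitz map does not increase $\cH^n$. For the lower bound I would first invoke Lemma~\ref{lem11824}: the approximating maps $\psi_k=\psi_{P_{H_{m_k}}}\circ\cdots\circ\psi_{P_{H_1}}$ (finite compositions of contractions associated with polarizations) converge to $\psi$ not just pointwise but \emph{uniformly} on $B^n$; write $\ee_k=\sup_{B^n}\|\psi_k-\psi\|\to0$. Because $\psi(B^n)$ is compact and $\psi_k(B^n)\subset\psi(B^n)+\ee_kB^n$, this gives $\limsup_k\cH^n(\psi_k(B^n))\le\cH^n(\psi(B^n))$. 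On the other hand, by Example~\ref{ex310}(ii) each $g:=\psi_{P_H}$ is the identity on $H^+$ and the reflection in $H$ on $H^-$, so $\cH^n(g(E))\le\cH^n(E)$ for every measurable $E$, whence $\cH^n(\psi_k(B^n))\le\kappa_n$ for all $k$. Combining, the whole theorem reduces to showing that the \emph{volume loss} $\kappa_n-\cH^n(\psi_k(B^n))$ tends to $0$.

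The role of injectivity enters as follows. Being a continuous injection of the compact set $B^n$, $\psi$ is a homeomorphism onto $\psi(B^n)$, so $\psi^{-1}$ is uniformly continuous, with some modulus $\omega$. If $\psi_k(x)=\psi_k(x')$ for $x,x'\in B^n$, then $\|\psi(x)-\psi(x')\|\le2\ee_k$, hence $\|x-x'\|\le\omega(2\ee_k)=:\delta_k\to0$; that is, $\psi_k$ identifies only $\delta_k$-close points of $B^n$. (A complementary, optional tool is a Brouwer-degree argument: for $p\in\psi(\inte B^n)$ with $\mathrm{dist}(p,\psi(S^{n-1}))>\ee_k$, homotopy invariance of the degree and injectivity of $\psi$ give $\deg(\psi_k,\inte B^n,p)=\deg(\psi,\inte B^n,p)=\pm1$, so $p\in\psi_k(B^n)$; letting $k\to\infty$ and using $\cH^n(\psi(S^{n-1}))=0$ yields $\liminf_k\cH^n(\psi_k(B^n))\ge\cH^n(\psi(B^n))$, i.e.\ the loss of $\psi_k$ \emph{converges} to that of $\psi$.)

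I would then decompose the loss along the composition. Writing $g_j=\psi_{P_{H_j}}$ via the identity and the reflection $\dagger_j$ in $H_j$, set $C_0=B^n$ and $C_j=g_j(C_{j-1})$, so $C_{m_k}=\psi_k(B^n)$; note that each $C_j$ lies in a closed unit ball, being the image of $B^n$ under a composition of $1$-Lipschitz maps. A short computation with $g_j$ gives
\[
\cH^n(C_j)=\cH^n(C_{j-1})-\tfrac12\,\cH^n\!\big(C_{j-1}\cap C_{j-1}^{\dagger_j}\big),
\]
so the loss of $\psi_k$ equals $\tfrac12\sum_{j=1}^{m_k}\cH^n(C_{j-1}\cap C_{j-1}^{\dagger_j})$. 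Now if $z\in C_{j-1}\cap C_{j-1}^{\dagger_j}$, choose preimages in $B^n$ of $z$ and of $z^{\dagger_j}$ under $g_{j-1}\circ\cdots\circ g_1$; since $g_j(z)=g_j(z^{\dagger_j})$, these preimages are identified by $\psi_k$, hence $\delta_k$-close, and applying the $1$-Lipschitz map $g_{j-1}\circ\cdots\circ g_1$ gives $\|z-z^{\dagger_j}\|\le\delta_k$, i.e.\ $\mathrm{dist}(z,H_j)\le\delta_k/2$. Thus $C_{j-1}\cap C_{j-1}^{\dagger_j}$ lies in a slab of width $\delta_k$ about $H_j$ and, since the slices of $C_{j-1}$ parallel to $H_j$ are contained in unit $(n-1)$-balls, each term is at most $\tfrac12\delta_k\kappa_{n-1}$.

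The main obstacle, and where I expect the real work to lie, is that this per-step estimate does not by itself close the argument: the number $m_k$ of factors in $\psi_k$ is not controlled, so a priori the loss could persist by accumulating arbitrarily many thin-slab overlaps -- a ``whittling'' composition that repeatedly folds thin caps off its running image $C_j$. One must show that such behaviour is incompatible with the limit $\psi$ being injective (equivalently, with the degree remark, that $\psi$ itself has zero volume loss). Heuristically, a whittling composition squeezes a region of $B^n$ of positive measure onto a set of much smaller measure, and this cannot survive in the limit of maps that are piecewise ``identity-or-reflection'' when the limit is an injection. I would try to make this precise by passing to a weak-$*$ limit of the indicators of the ``folded'' subsets of $B^n$, tracking the masses carried by the intermediate images $C_j$, and showing that only finitely many steps -- a number depending on the target loss but not on $k$ -- can each contribute a fixed share of the loss, the remainder being controlled by a covering/compactness argument. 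Establishing this dichotomy is the crux of the proof.
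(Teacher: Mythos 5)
You have the right skeleton (easy upper bound, hard lower bound for $\cH^n(\psi(B^n))$), the step-wise volume-loss identity $\cH^n(C_j)=\cH^n(C_{j-1})-\tfrac12\,\cH^n(C_{j-1}\cap R_{H_j}C_{j-1})$ is correct, and you have correctly put your finger on the genuine gap: the per-step bound $\tfrac12\delta_k\kappa_{n-1}$ is useless without control on the number $m_k$ of factors, and nothing in the hypotheses provides one. The sketches you offer for closing the gap (weak-$*$ limits of indicators, a covering/compactness argument) are not worked out, and I do not see how to make them close the argument as stated; one genuinely needs to convert a fixed macroscopic volume deficit into a \emph{single} macroscopic fold, rather than trying to tame the accumulation of many thin ones.

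The paper's proof does exactly that, by a different mechanism that never sums per-step losses. It argues by contradiction: suppose $\cH^n(\psi(B^n))<(1-\delta)^n\kappa_n$ for some $\delta>0$. Track the centre $x_{k,i}=(\psi_{k,i}\circ\cdots\circ\psi_{k,1})(o)$ of the running image of $B^n$. If no hyperplane $H_{k,i}$ ever meets $\inte B(x_{k,i-1},1-\delta)$, then each $\psi_{k,i}$ acts on the shrunken ball $B(x_{k,i-1},1-\delta)$ as the identity or as the reflection in $H_{k,i}$, so $\psi_k(B(o,1-\delta))=B(x_{k,m_k},1-\delta)\subset\psi_k(B^n)$; if this happened for infinitely many $k$, uniform convergence (Lemma~\ref{lem11824}) would force $\cH^n(\psi(B^n))\ge(1-\delta)^n\kappa_n$, a contradiction. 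Hence for all large $k$ there is a first index $i_k$ at which $H_{k,i_k}$ cuts into the core $\inte B(x_{k,i_k-1},1-\delta)$. From this single significant cut the paper manufactures two points $y_k,z_k\in B^n$ with $\psi_k(y_k)=\psi_k(z_k)$ and $\liminf_k\|y_k-z_k\|>0$; this requires a conjugation by reflections so that $o$ lies in $H_{k,1}^+\cap\cdots\cap H_{k,i_k}^+$, and a case split on the depth $\beta$ of the cut relative to $\delta$. Passing to a subsequence, $\psi(y)=\psi(z)$ with $y\neq z$, contradicting injectivity. So the idea you are missing is that a fixed volume deficit forces at least one fold deep enough to create a macroscopic collision, and that single collision already survives the limit. (Your Brouwer-degree aside is correct but does not advance the lower bound: it only shows $\cH^n(\psi_k(B^n))\to\cH^n(\psi(B^n))$, which reformulates the question rather than answering it.)
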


\begin{proof}
Let $P_0$ be a polarization with respect to an oriented hyperplane $H$ and with associated contraction $\psi_0$.  We shall use the fact that if $B=B(x,r)$ is a ball, then by \eqref{f2}, we have $\psi_0(B)=B\cap H^+$ if $x\in H^+$ and $\psi_0(B)=(R_HB)\cap H^+$ if $x\in H^-$, where $R_H$ denotes reflection in $H$.

Since $\psi$ is a contraction, ${\cH}^n(\psi(B^n))\leq\kappa_n$. Suppose that ${\cH}^n(\psi(B^n))<(1-\delta)^{n}\kappa_n$ for some $\delta>0$. We have to show that $\psi$ is not injective on $B^n$. Let
\[
\psi_k=\psi_{k,m_k}\circ\dots\circ\psi_{k,1},
\]
$k\in \N$, be finite compositions of contractions $\psi_{k,i}$ associated with polarizations $P_{k,i}$ taken with respect to oriented hyperplanes $H_{k,i}$, $i=1,\dots,m_k$, with the property that $\psi_k\to \psi$ pointwise as $k\to\infty$.
For each $i=1,\dots,m_k$, let $x_{k,i}=(\psi_{k,i}\circ\dots\circ \psi_{k,1})(o)$ and $x_{k,0}=o$ for each $k$. Note that
\[
(\psi_{k,i}\circ\dots\circ \psi_{k,1})(B^n)\subset B(x_{k,i},1),
\]
as all mappings are contractions.  If $H_{k,i}$, $i=1,\dots,m_k$, does not meet $\inte B(x_{k,i-1},1-\delta)$, then $\psi_{{k,i}}({B(x_{k,i-1},1-\delta)})={B(x_{k,i},1-\delta)}$, regardless of the orientation of $H_{k,i}$.  If this is the case for each $i=1,\dots,m_k$, then ${B(x_{k,m_k},1-\delta)}\subset\psi_k(B^n)$, so ${\cH}^n(\psi_k(B^n))\ge (1-\delta)^{{n}}\kappa_n$. If moreover this occurs for arbitrarily large $k$, then there is a subsequence $(\psi_{k_j})$ of $(\psi_k)$ such that ${B(x_{k_j,m_{k_j}},1-\delta)}\subset\psi_{k_j}(B^n)$ for each $j\in \N$.  This would imply that $\psi(B^n)$ contains a ball of radius $(1-\delta)$ and hence that ${\cH}^n(\psi(B^n))\ge (1-\delta)^{{n}}\kappa_n$, a contradiction.

Thus there is an $N_1$ such that for each $k\ge N_1$, there is an oriented hyperplane $H_{k,i_k}$, where $1\le i_k\le m_k$ and $i_k$ is as small as possible, which meets $\inte {B(x_{k,i_k-1},1-\delta)}$. Assume from now on that $k\ge N_1$. We claim that there are $y_k$, $z_k\in B^n$ (possibly for $k$ in a subsequence) such that
\begin{equation}\label{non_inject_k}
 \psi_k(y_k)=\psi_k(z_k)\quad\text{and}\quad \liminf_{k\to\infty}\|y_k-z_k\|> 0.
\end{equation}

In order to prove this claim, we assume initially that
\begin{equation}\label{oinpositivehalfsp}
o\in H_{k,i}^+\quad\text{for $k\geq N_1$ and $i=1,\dots,i_k$.}
\end{equation}
This implies that $x_{k,i}=o$ and
$(\psi_{k,i}\circ\dots\circ \psi_{k,1})(B^n)\subset B^n$.
For $i=1,\dots,i_k$, let $\beta_{k,i}\le 1$ and let $w_{k,i}\in S^{n-1}$ be such that
\[
H_{k,i}=\{x : \langle x,w_{k,i}\rangle=1-\beta_{k,i}\}.
\]
The definition of $i_k$ implies that $\beta_{k,i}\leq\delta$, $i=1,\dots,i_k-1$, and $\beta_{k,i_k}>\delta$. We may assume, possibly after extracting a subsequence, that $\lim_{k\to\infty}\beta_{k,i_k}=\beta\geq\delta$.

Suppose first that $\beta>\delta$ and define
\[
y_k=(1-\delta)w_{k,i_k}\quad\text{and}\quad z_k=R_{H_{k,i_k}}(y_k).
\]
We have $y_k, z_k\in (1-\delta) B^n$, $z_k\in H_{k,i_k}^+$, and, by changing $N_1$ if necessary, $\|y_k-z_k\|\geq (\beta-\delta)$ for $k\ge N_1$. Since the map $\psi_{k,i}$ equals the identity on $(1-\delta) B^n$, $i=1,\dots,i_k-1$, and $ \psi_{k,i_k}(y_k)=z_k=\psi_{k,i_k}(z_k)$, we conclude that
\begin{equation}\label{non_inject_ik}
(\psi_{k,i_k}\circ\dots\circ \psi_{k,1})(y_k)=(\psi_{k,i_k}\circ\dots\circ \psi_{k,1})(z_k),
\end{equation}
and \eqref{non_inject_k} follows.

Now suppose that $\beta=\delta$. We may assume, by changing $N_1$ if necessary, that $\beta_{k,i_k}\leq 2\delta$ for $k\ge N_1$.
% The definition of $i_k$ implies $\beta_i\leq\delta$, $i=1,\dots,i_k-1$, and $\delta < \beta_{i_k}=\alpha_k \le 2\delta$.
For $\gamma\in(0,1)$ and $v\in S^{n-1}$, let $C(v,\gamma)$ denote the spherical cap
\[
 C(v,\gamma)=B^n\cap\{x\in \R^n : \langle x,v\rangle \geq \gamma\}.
\]
We may assume that $\delta$ is small enough to ensure that $1-4\delta>0$ and that if $v_1, v_2\in S^{n-1}$ and $C(v_1,1-4\delta)\cap C(v_2,1-4\delta)\neq \emptyset$, then $\langle v_1,v_2\rangle\geq 0$.
Define
\[
y_k=w_{k,i_k}\quad\text{and}\quad z_k=(\psi_{k,i_k}\circ\dots\circ\psi_{k,1})(y_k).
\]
To show that \eqref{non_inject_k} holds, we first prove by induction on $i$ that
\begin{equation}\label{intersec_halfsp}
(\psi_{k,i}\circ\dots\circ\psi_{k,1})(y_k)\in H_{k,1}^+\cap \dots \cap H_{k,i}^+
\end{equation}
for $i=1,\dots,i_k$.  Note that for each $i$, $(\psi_{k,i}\circ\dots\circ\psi_{k,1})(y_k)\in H_{k,i}^+$, by the definition of $\psi_{k,i}$. It follows that \eqref{intersec_halfsp} is valid for $i=1$.  Suppose that \eqref{intersec_halfsp} is true for $i=m<i_k$. Let $y_k^{(m)}=(\psi_{k,m}\circ\dots\circ\psi_{k,1})(y_k)$. If $y_k^{(m)}\in H_{k,m+1}^+$, then $\psi_{k,m+1}(y_k^{(m)})=y_k^{(m)}$ and \eqref{intersec_halfsp} follows from the inductive hypothesis.  Otherwise, we have
\begin{equation}\label{eqmpu}
 y_k^{(m)}\in B^n\setminus H_{k,m+1}^+.
\end{equation}
Suppose that there is a $j\in\{1,\dots,m\}$ such that
\begin{equation}\label{eqmi}
\psi_{k,m+1}(y_k^{(m)})\in B^n\setminus H_{k,j}^+.
\end{equation}
On $B^n\setminus H_{k,m+1}^+$, $\psi_{k,m+1}$ equals $R_{H_{k,m+1}}$. Since  $H_{k,m+1}\cap(1-2\delta) B^n=\emptyset$, $\psi_{k,m+1}$ maps $B^n\setminus H_{k,m+1}^+$ into $C(w_{k,m+1}, 1-4\delta)$, so, by  \eqref{eqmpu},
$\psi_{k,m+1}(y_k^{(m)})\in C(w_{k,m+1}, 1-4\delta)$.
Moreover, $H_{k,j}\cap(1-2\delta) B^n=\emptyset$, yielding $B^n\setminus H_{k,j}^+\subset C(w_{k,j}, 1-4\delta)$ and hence, by  \eqref{eqmi}, $ \psi_{k,m+1}(y_k^{(m)})\in C(w_{k,j}, 1-4\delta)$. Therefore $C(w_{k,j}, 1-4\delta)\cap C(w_{k,m+1}, 1-4\delta)\neq \emptyset$ and our assumptions on $\delta$ give
\begin{equation}\label{condition_wi}
 \langle w_{k,j}, w_{k,m+1}\rangle \geq 0.
\end{equation}
We have $ \psi_{k,m+1}(y_k^{(m)})=R_{H_{k,m+1}}(y_k^{(m)})= y_k^{(m)}-c\, w_{k,m+1}$ for some $c>0$.  The inductive hypothesis implies that $y_k^{(m)}\in H_{k,j}^+$, so by \eqref{condition_wi}, we obtain
$$\langle \psi_{k,m+1}(y_k^{(m)}), w_{k,j}\rangle< \langle y_k^{(m)}, w_{k,j}\rangle$$
and thus $\psi_{k,m+1}(y_k^{(m)})\in H_{k,j}^+$. This contradicts \eqref{eqmi}, so \eqref{intersec_halfsp} is true when $i=m+1$ and so for all $i=1,\dots, i_k$.

By the definition of $z_k$ and \eqref{intersec_halfsp}, we have $z_k\in H_{k,i_k}^+$.  Hence $\|y_k-z_k\|=\|w_{k,i_k}-z_k\|\ge \beta_{k,i_k}>\delta$.  To prove that $\psi_k(y_k)=\psi_k(z_k)$, it suffices to show \eqref{non_inject_ik}, and this follows from the definition of $z_k$, the fact that $z_k\in H_{k,1}^+\cap \dots \cap H_{k,i_k}^+$ in view of  \eqref{intersec_halfsp} with $i=i_k$, and the fact that on $H_{k,1}^+\cap \dots \cap H_{k,i_k}^+$, each map $\psi_{k,i}$, $i=1, \dots,i_k$, is the identity. This completes the proof of \eqref{non_inject_k} under our initial assumption \eqref{oinpositivehalfsp}.

We now prove \eqref{non_inject_k} without assuming \eqref{oinpositivehalfsp}. For $i=1,\dots,i_k$,  let $f_i:\R^n\to\R^n$ be defined by
\[
f_i=\begin{cases}
\Id, &\text{if $x_{k,i-1}\in H_{k,i}^+$,} \\
R_{H_{k,i}}, &\text{otherwise.}
\end{cases}
\]
Then $f_i$ is a bijection and $f_i^2=\Id$.
Now let $\tilde{\psi}_{k,1}=f_1\circ\psi_{k,1}$ and for $i=2,\dots,i_k$, let
$$\tilde{\psi}_{k,i}=f_1\circ\dots \circ f_{i}\circ\psi_{k,i}\circ f_{i-1}\circ\dots \circ f_1.$$
For each $i$, $\tilde{\psi}_{k,i}: B^n\to B^n$ acts on $B^n$ as the contraction associated with the polarization with respect to the hyperplane $f_1\circ\dots\circ f_{i-1}H_{k,i}$, oriented so that $o$ belongs to the positive half-space. We may therefore replace $\psi_{k,i}$ by $\tilde{\psi}_{k,i}$ in the previous argument and
conclude that there are $y_k, z_k\in B^n$ such that $\liminf_{k\to\infty}\|y_k-z_k\|>0$ and
\[
(\tilde{\psi}_{k,i_k}\circ\dots\circ\tilde{\psi}_{k,1})(y_k)=
(\tilde{\psi}_{k,i_k}\circ\dots\circ\tilde{\psi}_{k,1})(z_k).
\]
Since
\[
\tilde{\psi}_{k,i_k}\circ\dots\circ\tilde{\psi}_{k,1}=f_1\circ\dots\circ f_{i_k}\circ\psi_{k,i_k}\circ\dots\circ\psi_{k,1},
\]
and $f_1\circ\dots\circ f_{i_k}$ is a bijection, \eqref{non_inject_ik} holds too. This concludes the proof of \eqref{non_inject_k}.

By taking subsequences, if necessary, we may assume that $y_k\to y\in B^n$ and $z_k\to z\in B^n$ as $k\to\infty$, where $y\neq z$ by \eqref{non_inject_k}. Since $\psi_k\to \psi$ uniformly as $k\to\infty$, by Lemma~\ref{lem11824}, \eqref{non_inject_k} implies that $\psi(y)=\psi(z)$, so $\psi$ is not injective on $B^n$.
\end{proof}

\begin{cor}\label{cor42}
If $H$ is a hyperplane in $\R^n$, the Brock set map $B_H$ with parameter $0<b<1$ cannot be approximated on ${\cK}^n_n$ by finite compositions of polarizations.
\end{cor}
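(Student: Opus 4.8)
The plan is to deduce Corollary~\ref{cor42} from Theorem~\ref{lem5} by passing from the set maps to their associated contractions. Write $H=u^{\perp}+t_0u$ with $u\in S^{n-1}$ and $t_0\in\R$, and suppose, for a contradiction, that $\di_{B_H}$ with parameter $0<b<1$ can be approximated on $\cK^n_n$ by finite compositions of polarizations. Then there is a sequence $(\di_k)$, where each $\di_k$ is a finite composition of polarizations, such that $\|1_{\di_k A}-1_{\di_{B_H}A}\|_1\to 0$ for every $A\in\cK^n_n$.

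First I would identify the contractions in play. A polarization $P_0$ sends a ball $B(x,r)$ to $B(\psi_0(x),r)$, where $\psi_0$ is the ``paper-folding'' contraction of Example~\ref{ex310}(ii), independent of $r$; consequently a finite composition $\di_k=P_{k,m_k}\circ\cdots\circ P_{k,1}$ of polarizations satisfies $\di_k B(x,r)=B(\psi_{\di_k}(x),r)$, where $\psi_{\di_k}=\psi_{k,m_k}\circ\cdots\circ\psi_{k,1}$ is a finite composition of contractions associated with polarizations. On the other hand, by Example~\ref{ex310}(v) the restriction of $\di_{B_H}$ to $\cK^n_n$ maps $B(x,r)$ to $B(\psi_{B_H}(x),r)$, essentially, where $\psi_{B_H}(x)=x+(b-1)(\langle x,u\rangle-t_0)u$.

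Next I would convert the convergence $\di_k\to\di_{B_H}$ into pointwise convergence of the contractions. Taking $A=B(x,1)\in\cK^n_n$ gives
\[
\cH^n\bigl(B(\psi_{\di_k}(x),1)\,\triangle\,B(\psi_{B_H}(x),1)\bigr)\to 0
\]
as $k\to\infty$, and since the symmetric difference of two closed unit balls has $\cH^n$-measure tending to $0$ only when the distance between their centers tends to $0$, we get $\psi_{\di_k}(x)\to\psi_{B_H}(x)$ for every $x\in\R^n$, in particular for $x\in B^n$. Thus $\psi_{B_H}$ is approximated pointwise on $B^n$ by finite compositions of contractions associated with polarizations.

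Finally I would invoke Theorem~\ref{lem5}. The map $\psi_{B_H}$ is affine: it fixes $u^{\perp}$ pointwise and acts on the $u$-coordinate by $t\mapsto bt+(1-b)t_0$, so, since $b\neq 0$, it is injective on $B^n$. Theorem~\ref{lem5} then forces $\cH^n(\psi_{B_H}(B^n))=\kappa_n$. But $\psi_{B_H}(B^n)$ is an ellipsoid obtained from $B^n$ by scaling the $u$-direction by the factor $b$, so $\cH^n(\psi_{B_H}(B^n))=b\,\kappa_n<\kappa_n$ because $0<b<1$, a contradiction. The serious analytic content is carried entirely by Theorem~\ref{lem5}; here the only step needing any care is the passage from $L^1$-convergence of the set maps to pointwise convergence of the associated contractions, together with the observation --- needed in order to speak of $\psi_{\di_k}$ at all --- that a finite composition of polarizations maps balls to balls via a radius-independent contraction.
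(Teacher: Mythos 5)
Your proof is correct and takes essentially the same route as the paper: both apply Theorem~\ref{lem5} to the contraction $\psi_{B_H}$, which is injective on $B^n$ yet maps $B^n$ to an ellipsoid of volume $b\,\kappa_n<\kappa_n$; you simply spell out the step (left implicit in the paper) that $L^1$-approximation of the set map forces pointwise convergence of the associated contractions on $B^n$. One tiny slip: $\psi_{B_H}$ fixes $H=u^{\perp}+t_0u$ pointwise, not $u^{\perp}$ (these coincide only when $t_0=0$), but this does not affect the injectivity or the volume computation.
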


\begin{proof}
The contraction $\psi_{B_H}$ from \eqref{eq:psi1} associated with $B_H$ is defined by \eqref{f4}.  This map is injective on $B^n$ and $\psi_{B_H}(B^n)$ is an ellipsoid of volume $b\,\kappa_n$.  It follows from Theorem~\ref{lem5} that $B_H$ cannot be approximated on ${\cK}^n_n$ by finite compositions of polarizations.
\end{proof}

We do not know if ``approximated" in Corollary~\ref{cor42} can be replaced by ``weakly approximated"; see Problem~\ref{prob4}.

\begin{lem}\label{lem_balls_close}
Let $\cE\subset \cL^n$ be a class that contains all balls, and let  $\di,\heartsuit$ be maps in $\cJ(\cE)$ with associated contractions $\psi_{\di,r},\psi_{\heartsuit,r}$, respectively, from \eqref{eq:psi1}.
For all $x\in \R^n$ and $r>0$ with $(\di B(x,r))\cap \heartsuit B(x,r)\ne\emptyset$, we have
\[
\|\psi_{\di,r}(x)-\psi_{\heartsuit,r}(x)\|\le \frac{n}{2r^{n-1}\kappa_{n-1}}\|1_{\di B(x,r)}-1_{\heartsuit B(x,r)}\|_1.
\]
\end{lem}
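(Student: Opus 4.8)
The statement reduces to a purely geometric fact about two balls of the same radius $r$. Write $B(a,r)=\di B(x,r)$ and $B(b,r)=\heartsuit B(x,r)$, where $a=\psi_{\di,r}(x)$ and $b=\psi_{\heartsuit,r}(x)$, and set $t=\|a-b\|$. Since the two balls intersect, $0\le t\le 2r$. The quantity $\|1_{\di B(x,r)}-1_{\heartsuit B(x,r)}\|_1$ is exactly $\cH^n(B(a,r)\,\triangle\,B(b,r))=2\bigl(\kappa_n r^n-\cH^n(B(a,r)\cap B(b,r))\bigr)$, so the claim is equivalent to a lower bound
\[
\kappa_n r^n-\cH^n(B(a,r)\cap B(b,r))\ge \frac{r^{n-1}\kappa_{n-1}}{n}\,t,
\]
i.e.\ a bound on the volume lost from a ball of radius $r$ when it is displaced a distance $t$. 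By scaling, it suffices to treat $r=1$.

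First I would reduce the intersection of two unit balls at distance $t$ to a standard one-dimensional integral: slicing orthogonal to the segment $[a,b]$, one sees $\cH^n(B(a,1)\cap B(b,1))=2\kappa_{n-1}\int_{t/2}^1 (1-s^2)^{(n-1)/2}\,ds$, while $\kappa_n=2\kappa_{n-1}\int_0^1(1-s^2)^{(n-1)/2}\,ds$. Hence
\[
\kappa_n-\cH^n(B(a,1)\cap B(b,1))=2\kappa_{n-1}\int_0^{t/2}(1-s^2)^{(n-1)/2}\,ds.
\]
Now the integrand $(1-s^2)^{(n-1)/2}$ is decreasing on $[0,t/2]\subset[0,1]$, so the integral is at least $(t/2)\,(1-(t/2)^2)^{(n-1)/2}$. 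That bound degenerates as $t\to2$, so instead I would use a cruder but uniform estimate: compare the volume-loss function to a linear function of $t$. The cleanest route is to note that $g(t):=\kappa_n-\cH^n(B(a,1)\cap B(b,1))$, viewed as a function of $t\in[0,2]$, satisfies $g(0)=0$, $g(2)=\kappa_n$, and $g$ is convex in $t$ (its derivative $\kappa_{n-1}(1-(t/2)^2)^{(n-1)/2}$ is decreasing); therefore $g(t)\ge (t/2)\,g(2)/1 \cdot$ — more precisely, convexity plus $g(0)=0$ gives $g(t)/t$ nondecreasing, so $g(t)\ge t\cdot g'(0^+)/1$? No: convexity with $g(0)=0$ yields $g(t)\ge t\,g'(0^+)=t\,\kappa_{n-1}$ only near $0$. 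Instead, convexity and $g(0)=0$ give that $g(t)\ge \tfrac{t}{2}\,g(2)$ is false in general; the correct consequence is $g(t)\le \tfrac t2 g(2)$. So I would abandon convexity and argue directly.

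The robust argument is the monotonicity of the integrand: for $s\in[0,t/2]$ with $t\le 2$ we have $(1-s^2)^{(n-1)/2}\ge (1-(t/2)^2)^{(n-1)/2}$... which again vanishes at $t=2$. The fix is that we do not need a bound valid up to $t=2$ with the constant $\kappa_{n-1}$; we need exactly $g(t)\ge \tfrac{\kappa_{n-1}}{n}\,t$, and since the maximal value $g(2)=\kappa_n=\frac{2\kappa_{n-1}}{n}\cdot n\int_0^1(1-s^2)^{(n-1)/2}\,ds\cdot\frac{1}{\dots}$ — concretely $\kappa_n\ge \frac{2\kappa_{n-1}}{n}$, which one checks from $\int_0^1(1-s^2)^{(n-1)/2}ds\ge 1/n$ — the linear function $\tfrac{\kappa_{n-1}}{n}t$ lies below $g$ at both endpoints $t=0$ and $t=2$, and since $g$ is concave as a function of $t$ (the integrand $(1-s^2)^{(n-1)/2}$ is decreasing, so $g'$ is decreasing, so $g$ is concave), $g$ lies above any chord, in particular above the line through $(0,0)$ and $(2,g(2))$, hence above $\tfrac{\kappa_{n-1}}{n}t$ on $[0,2]$. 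This gives $g(t)\ge \tfrac{\kappa_{n-1}}{n}t$, which after undoing the scaling and multiplying by $2$ is exactly the asserted inequality $\|1_{\di B(x,r)}-1_{\heartsuit B(x,r)}\|_1=2r^ng(t/r)\ge \tfrac{2r^{n-1}\kappa_{n-1}}{n}\,t=\tfrac{2r^{n-1}\kappa_{n-1}}{n}\|\psi_{\di,r}(x)-\psi_{\heartsuit,r}(x)\|$, i.e.\ $\|\psi_{\di,r}(x)-\psi_{\heartsuit,r}(x)\|\le \tfrac{n}{2r^{n-1}\kappa_{n-1}}\|1_{\di B(x,r)}-1_{\heartsuit B(x,r)}\|_1$.

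**Main obstacle.** The only real content is packaging the slicing computation and verifying the two elementary facts used: that $g$ is concave in $t$ on $[0,2]$ (decreasing integrand) and that $\kappa_n\ge 2\kappa_{n-1}/n$ (equivalently $\int_0^1(1-s^2)^{(n-1)/2}\,ds\ge 1/n$, which follows from $(1-s^2)^{(n-1)/2}\ge (1-s)^{n-1}\ge \dots$ or simply from $(1-s^2)^{(n-1)/2}\ge 1-s$ together with $\int_0^1(1-s)\,ds=1/2\ge 1/n$ for $n\ge 2$). With concavity in hand, the chord-above-graph inequality finishes it immediately; no subtle estimate is needed, and the bound is sharp in the small-displacement regime, which is why the constant $n/(2r^{n-1}\kappa_{n-1})$ — the reciprocal of the surface-area-like coefficient $g'(0^+)=\kappa_{n-1}$ up to the factor $n$ from $\kappa_n\ge 2\kappa_{n-1}/n$ — appears.
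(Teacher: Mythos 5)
Your proof is correct in substance, but it takes a genuinely different route from the paper. The paper avoids any computation of the intersection volume: it compares $B_1\setminus B_2$ with the set $(B_2+t[o,u])\setminus B_2$ (which has volume $\cH^n(C)$ by Cavalieri, $C$ a cylinder of radius $r$ and length $t$), subtracts off the part of $C$ outside two spherical cones $C_1,C_2$ of radius $r$ and height $t/2$, and lands directly on $\cH^n(B_1\triangle B_2)\ge 4\cH^n(C_1)=\tfrac{2r^{n-1}\kappa_{n-1}}{n}t$ with no integral or auxiliary inequality. Your slicing computation plus the concavity/chord argument is cleaner analytically, and in fact yields the sharper intermediate bound $g(t)\ge\tfrac{t}{2}\kappa_n$ before you weaken to the constant that appears in the statement; the price is the extra fact $\kappa_n\ge 2\kappa_{n-1}/n$, which the paper never needs. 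Both are valid proofs.

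Two remarks on your write-up. First, the middle paragraph misidentifies $g$ as convex before correcting to concave; you do recover, but the text as written would need that detour deleted. Second, of the two suggested justifications for $\int_0^1(1-s^2)^{(n-1)/2}\,ds\ge 1/n$, the pointwise bound $(1-s^2)^{(n-1)/2}\ge 1-s$ is actually false for larger $n$ (e.g.\ $n=4$, $s=0.9$ gives $0.19^{3/2}\approx 0.083<0.1$); but your other suggestion, $(1-s^2)^{(n-1)/2}=(1-s)^{(n-1)/2}(1+s)^{(n-1)/2}\ge(1-s)^{n-1}$, does hold for $s\in[0,1]$ and gives $\int_0^1(1-s)^{n-1}\,ds=1/n$, so the argument goes through.
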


\begin{proof}
The quantity $\|1_{\di B(x,r)}-1_{\heartsuit B(x,r)}\|_1$ is the volume of the symmetric difference of the balls $B_1=B(\psi_{\di,r}(x),r)$ and $B_2=B(\psi_{\heartsuit,r}(x),r)$. Let $tu=\psi_{\di,r}(x)-\psi_{\heartsuit,r}(x)$, where $u\in S^{n-1}$ and $t\ge 0$. Since $(\di B(x,r))\cap \heartsuit B(x,r)\ne\emptyset$, we have  $t\le 2r$. Excluding a trivial case, we may assume that $t>0$.  For $i=1,2$, let $H_i$ be the hyperplane orthogonal to $u$ and containing the center of $B_i$.  Then $\conv(B_1\cup B_2)=C\cup E_1\cup E_2$, where $C$ is a spherical cylinder $C$ of radius $r$ and length $t$ bounded by $H_1$ and $H_2$, and $E_i$ is the half-ball of radius $r$ contained in $B_i$, bounded by $H_i$, and such that $C\cap\inte E_i=\emptyset$, $i=1,2$.  Let $v$ be the midpoint of the line segment joining the centers of $B_1$ and $B_2$, and let $C_i$ be the spherical cone of radius $r$ and height $t/2$ with apex $v$ and base $B_i\cap H_i$, $i=1,2$.  It is easy to see that
$$B_1\setminus B_2=(B_2+tu)\setminus B_2\supset ((B_2+t[o,u])\setminus B_2)\setminus (C\setminus (C_1\cup C_2)),$$
and that by Cavalieri's principle, $\cH^n((B_2+t[0,u])\setminus B_2)=\cH^n(C)$.  Consequently,
$$
\cH^n(B_1\setminus B_2)\ge  \cH^n((B_2+t[0,u])\setminus B_2)-(\cH^n(C)-2\cH^n(C_1))=2\cH^n(C_1),
$$
and similarly $\cH^n(B_2\setminus B_1)\ge 2\cH^n(C_1)$.  Therefore
\begin{align*}
\|1_{\di B(x,r)}-1_{\heartsuit B(x,r)}\|_1&=\cH^n(B_1\triangle B_2)\\
&\ge 4\cH^n(C_1)=\frac{4}{n}r^{n-1}\kappa_{n-1}\frac{t}{2}
=\frac{2r^{n-1}\kappa_{n-1}}{n }\|\psi_{\di,r}(x)-\psi_{\heartsuit,r}(x)\|,
\end{align*}
as required.
\end{proof}

The following lemma generalizes the observation that a contraction $\psi:\R^n\to \R^n$ that does not decrease the distance between two given points acts as an affine function on the line segment with these points as endpoints. This is the special case $\ee=0$ of the lemma.

\begin{lem}\label{lem_almost_affine}
Let $\psi:\R^n\to \R^n$ be a contraction and let $\ee\ge 0$. If $x,x'\in \R^n$ are such that $L=\|\psi(x)-\psi(x')\|$ satisfies $\|x-x'\|\le L+\ee$, then
\[
\|\psi((1-t)x+tx')-((1-t)\psi(x)+t\psi(x'))\|
\le \sqrt{\ee(2L+\ee)}, \qquad 0\le t\le 1.
\]
\end{lem}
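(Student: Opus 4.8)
The plan is to reduce everything to a single algebraic identity and then apply the contraction property twice. First I would fix notation: write $y=\psi(x)$, $y'=\psi(x')$, $x_t=(1-t)x+tx'$, $y_t=\psi(x_t)$, and $z_t=(1-t)y+ty'$, so the quantity to be estimated is $\|y_t-z_t\|$, the deviation of $\psi$ at $x_t$ from the affine combination of its values at $x$ and $x'$. Since $\psi$ is a contraction, $L=\|y-y'\|\le\|x-x'\|$, and together with the hypothesis this pins $\|x-x'\|$ into the interval $[L,L+\ee]$.

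The key step is the Stewart-type identity
\[
(1-t)\|y_t-y\|^2+t\|y_t-y'\|^2=\|y_t-z_t\|^2+t(1-t)\|y-y'\|^2,
\]
valid in $\R^n$ because $z_t$ is the point of the segment $[y,y']$ at parameter $t$. To see it, write $y_t-y=(y_t-z_t)+t(y'-y)$ and $y_t-y'=(y_t-z_t)-(1-t)(y'-y)$ and expand the squared norms: the terms in $\langle y_t-z_t,\,y'-y\rangle$ cancel because their coefficients are $2t(1-t)-2t(1-t)=0$, and the $\|y'-y\|^2$ terms combine with coefficient $(1-t)t^2+t(1-t)^2=t(1-t)$.

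Next I would feed in the contraction inequalities. Since $x_t-x=t(x'-x)$ and $x_t-x'=-(1-t)(x'-x)$, contractivity of $\psi$ gives $\|y_t-y\|\le t\|x-x'\|$ and $\|y_t-y'\|\le(1-t)\|x-x'\|$. Substituting these into the identity and bounding the left-hand side yields
\[
\|y_t-z_t\|^2\le (1-t)t^2\|x-x'\|^2+t(1-t)^2\|x-x'\|^2-t(1-t)L^2=t(1-t)\big(\|x-x'\|^2-L^2\big).
\]
Finally, $\|x-x'\|^2-L^2\le (L+\ee)^2-L^2=\ee(2L+\ee)$ and $t(1-t)\le 1/4$ give $\|y_t-z_t\|^2\le \tfrac14\,\ee(2L+\ee)$, which is in fact slightly stronger than the asserted bound.

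I do not expect a genuine obstacle here: once the identity above is written down the rest is routine. If one does not hit upon it, the natural fallback is to project $y_t$ orthogonally onto the line through $y$ and $y'$ and estimate the two coordinates of the projection separately using the same two contraction inequalities; that route also works but is fiddlier and forces the degenerate case $L=0$ to be treated by hand, whereas the identity absorbs it automatically. As a sanity check, when $\ee=0$ the estimate forces $y_t=z_t$, recovering the stated classical fact that a contraction which does not decrease the distance between two points is affine on the segment joining them.
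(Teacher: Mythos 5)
Your proof is correct, and it is genuinely different from the paper's. The paper proceeds geometrically: it shows that both $\psi(x_t)$ and $(1-t)\psi(x)+t\psi(x')$ lie in the lens-shaped intersection $M=B(\psi(x),t(L+\ee))\cap B(\psi(x'),(1-t)(L+\ee))$, computes the circumradius $\rho=\tfrac12\sqrt{\ee(2L+\ee)}$ of $M$ (via the explicit formula for the radius of a sphere--sphere intersection, with a case distinction according to whether one ball contains the other), and concludes with the bound $2\rho=\sqrt{\ee(2L+\ee)}$. Your Stewart-identity argument replaces all of that by a three-line algebraic computation: the identity $(1-t)\|y_t-y\|^2+t\|y_t-y'\|^2=\|y_t-z_t\|^2+t(1-t)\|y-y'\|^2$ plus the two contraction bounds give $\|y_t-z_t\|^2\le t(1-t)(\|x-x'\|^2-L^2)\le\tfrac14\,\ee(2L+\ee)$. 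This is not only shorter and free of case analysis, it is actually sharper by a factor of $2$: you obtain $\tfrac12\sqrt{\ee(2L+\ee)}$ where the paper settles for $\sqrt{\ee(2L+\ee)}$, because the paper bounds the distance of two points in $M$ by the diameter of the circumball rather than locating both near its center. The weaker constant suffices for the paper's application, but your argument is the cleaner one and the one I would recommend.
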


\begin{proof}
Let $w=(1-t)x+tx'$ for $0\le t\le 1$. Since $\psi$ is a contraction, the assumption $\|x-x'\|\le L+\ee$ implies that
\[
\|\psi(w)-\psi(x)\|\le \|w-x\|=t\|x-x'\|\le t(L+\ee)
\]
and
\[
\|\psi(w)-\psi(x')\|\le \|w-x'\|=(1-t)\|x-x'\|\le (1-t)(L+\ee).
\]
Hence,
\[
\psi(w)\in B(\psi(x),t(L+\ee))\cap B(\psi(x'),(1-t)(L+\ee))=M,
\]
say.  Direct calculation shows that $(1-t)\psi(x)+t\psi(x')\in M$.
We claim that $M$ is contained in a ball with radius $\rho=(1/2)\sqrt{\ee(2L+\ee)}$, yielding
\begin{equation}\label{eq:close}
\|\psi(w)-((1-t)\psi(x)+t\psi(x'))\|\le 2\rho=\sqrt{\ee(2L+\ee)},
\end{equation}
as desired.

To prove the claim, we may assume without loss of generality that $\psi(x)=o$ and $\psi(x')=Le_1$, and set $r=(1-t)(L+\ee)$ and $R=t(L+\ee)$.  The smallest ball containing $M=B(o,R)\cap B(Le_1,r)$ is $B(o,R)$, if $r^2\ge R^2+L^2$, and $B(Le_1,r)$, if $R^2\ge L^2+r^2$.  These two conditions are equivalent to $t\le t_0$ and $t\ge t_1$, respectively, where
$$t_0= \frac{(L+\ee)^2-L^2}{2(L+\ee)^2}\le\frac12~\quad{\text{and}}~\quad
t_1= \frac{(L+\ee)^2+L^2}{2(L+\ee)^2}\ge\frac12.$$
When $t_0\le t\le t_1$, the spheres $\partial B(o,R)$ and $\partial B(Le_1,r)$ intersect, and the radius $a$ of this intersection is also the radius of the smallest ball containing $M$.  To find $a$, one can appeal to a known formula (see \cite{W}) for the radius of the circle of intersection of the spheres $\partial B(o,R)$ and $\partial B(de_1,r)$ in $\R^3$, namely,
$$\frac{1}{2d}\left((-d+r-R)(-d-r+R)(-d+r+R)(d+r+R)\right)^{1/2}.$$
(The calculation is the same for all $n\ge 2$.)  Substituting for $R$ and $r$ and setting $d=L$, we obtain
\begin{equation}\label{eq821241}
a=\frac{1}{2L}\left((-L+(1-2t)(L+\ee))(-L+(2t-1)(L+\ee))
\ee(2L+\ee)\right)^{1/2}.
\end{equation}
Summarizing, the radius of the smallest ball containing $M$ is the continuous function of $t$ equal to \eqref{eq821241}, if $t_0\le t\le t_1$, to $t(L+\ee)$, if $t\le t_0$, and to $(1-t)(L+\ee)$, if $t\ge t_1$.  By differentiation, we find that the maximum value of $a$ in \eqref{eq821241} occurs when $t=1/2$, and equals $(1/2)\sqrt{\ee(2L+\ee)}$.  The maxima of $t(L+\ee)$ for $t\le t_0$ and of $(1-t)(L+\ee)$ for $t\ge t_1$ are
$$t_0(L+\ee)=(1-t_1)(L+\ee)=\frac{\ee(2L+\ee)}{2(L+\ee)}\le \frac{\sqrt{\ee(2L+\ee)}}{2},$$
since $L\ge 0$.  Thus $\rho=(1/2)\sqrt{\ee(2L+\ee)}$, proving \eqref{eq:close}.
\end{proof}

In the following result it is crucial that the approximating maps have associated contractions from \eqref{eq:psi1} that are independent of $r>0$. This is satisfied if these maps are smoothing, for instance. Recall the definitions of the classes $\cJ(\cE)$ and $\cJ_H(\cE)$, where $\cE\subset {\cL}^n$, from Section~\ref{Properties}, and that $\cF_n^n$ denotes the family of all finite unions of balls in $\R^n$.

\begin{thm}\label{thm:not_weaklyfiniteBalls}
Let $H$ be a hyperplane in $\R^n$ and let $\di\in \cJ_H(\cF_n^n)$. Furthermore, let $\cJ'\subset \cJ(\cF_n^n)$ be such that for each map in $\cJ'$, the associated contraction from \eqref{eq:psi1} is independent of $r>0$.

If $\di$ is weakly approximable on ${\cF}_n^n$ by maps in $\cJ'$, then there is a sequence $(\di_k)$ from $\cJ'$ such that the associated contractions $\psi_{\di_k}$ converge uniformly on $B^n$ to the contraction $\psi_{\di}$ associated with $\di$.
\end{thm}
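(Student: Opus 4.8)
The plan is to construct the sequence by diagonalization over finer and finer finite nets of $B^n$, using three facts. By Lemma~\ref{lemm5} (in the stated version with $t_0\in\R$), $\di$ is governed by a one-dimensional contraction $\varphi_\di$, with $\psi_\di(x)=x|u^\perp+\varphi_\di(\langle x,u\rangle)u$, so in particular $\psi_\di$ preserves $u^\perp$-components; Lemma~\ref{lem_balls_close} converts $L^1$-closeness of the images of a ball into closeness of the associated contractions at its centre; and Lemma~\ref{lem11824} upgrades control on a $(1/k)$-net of $B^n$ to uniform control on $B^n$, all maps involved being contractions. Concretely: since $\psi_{\di_k}$ and $\psi_\di$ are $1$-Lipschitz, if for each $k$ we find $\di_k\in\cJ'$ with $\|\psi_{\di_k}(d)-\psi_\di(d)\|<1/k$ for all $d$ in a fixed $(1/k)$-net $N_k$ of $B^n$, then $\sup_{x\in B^n}\|\psi_{\di_k}(x)-\psi_\di(x)\|\le 3/k\to0$. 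So it suffices to prove the local statement: for every finite $N\subset B^n$ and $\eta>0$ there are $A=A(N,\eta)\in\cF_n^n$ and $\delta>0$ such that any $\di'\in\cJ(\cF_n^n)$ with $r$-independent associated contraction and $\|1_{\di'A}-1_{\di A}\|_1<\delta$ satisfies $\|\psi_{\di'}(d)-\psi_\di(d)\|<\eta$ for all $d\in N$. Granting this, at stage $k$ one applies the weak approximability of $\di$ on $\cF_n^n$ by $\cJ'$ to the single set $A=A(N_k,1/k)$, picks a member $\di_k\in\cJ'$ of the resulting approximating sequence with $L^1$-error below $\delta$, and is done.

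To build $A$ for a given $N=\{d_1,\dots,d_m\}$ and $\eta$, first perturb the $d_j$ by at most the net mesh so their $u^\perp$-components $d_j|u^\perp$ become pairwise distinct (this merely replaces $N_k$ by a slightly coarser net); since $\psi_\di$ preserves $u^\perp$-components, the targets $z_j:=\psi_\di(d_j)$ are then pairwise distinct, with $\|z_i-z_j\|\ge\|d_i|u^\perp-d_j|u^\perp\|$. Choose pairwise distinct radii $\rho_1,\dots,\rho_m$, small in terms of $N$ and $\varphi_\di$, so that the balls $B(d_j,\rho_j)$ are pairwise disjoint and lie in pairwise disjoint $H$-cylinders, the slots $B(z_j,\rho_j)$ are pairwise disjoint and far apart relative to $\max_j\rho_j$, and distinct radii differ in $n$-th power by much more than the error $\delta$ fixed later. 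Put $A=\bigsqcup_j B(d_j,\rho_j)\in\cF_n^n$. Since $\di$ is monotone, measure preserving, and, because it respects $H$-cylinders, carries the pieces into the pairwise disjoint cylinders, one obtains $\di A=\bigsqcup_j B(z_j,\rho_j)$ up to a null set. Now for $\di'\in\cJ'$ with $\|1_{\di'A}-1_{\di A}\|_1<\delta$: for each $j$, $\di'B(d_j,\rho_j)=B(\psi_{\di'}(d_j),\rho_j)$ is a radius-$\rho_j$ ball inside $\di'A$, so all but at most $\delta$ of its volume lies in $\di A$, hence (the slots being far apart relative to $\rho_j$) essentially inside one slot; identifying that slot as $B(z_j,\rho_j)$ via the matching of radii and then invoking Lemma~\ref{lem_balls_close} yields $\|\psi_{\di'}(d_j)-z_j\|<\eta$ once $\delta$ is small enough.

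The main obstacle is exactly this matching step. A general map in $\cJ(\cF_n^n)$ need not be additive on disjoint unions — finite compositions of polarizations already are not — so a priori the radius-$\rho_j$ ball $\di'B(d_j,\rho_j)$ could be almost inside a slot of strictly larger radius, or two of these balls could pile near one slot while another slot of $\di A$ is filled by "excess" mass of $\di'A$; since $\di'A$ and $\di A$ have equal measure, such a rearrangement is not excluded by a volume count. The heart of the proof is to design $A$ (in terms of $N$, $\eta$, and the known $\varphi_\di$) so that this cannot happen: one must combine the distinct radii (distinguishing slots by volume, via the measure-preserving property), the mutual separation of the slots (so a single image ball meets at most one), the fact that the $H$-cylinder property forces the slots to sit at the prescribed $u^\perp$-positions, and the $1$-Lipschitz property of $\psi_{\di'}$; if necessary one also thickens $A$ into a connected configuration by inserting thin strings of small balls between the $B(d_j,\rho_j)$, so that images of overlapping balls overlap and the bead-order along the chain is preserved. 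Once the correspondence $d_j\mapsto B(z_j,\rho_j)$ is pinned down, the remaining estimates are routine applications of the three lemmas cited above.
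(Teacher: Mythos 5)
Your overall plan is structurally similar to the paper's: build a single finite union of balls that is rigid enough under monotone, measure-preserving, ball-preserving maps to force the approximating contraction to track $\psi_\di$, then propagate pointwise closeness to uniform closeness via the Lipschitz property. You also correctly identify where the difficulty lies — the ``matching step'' that pins down which slot of $\di A$ receives the image of each component. But you stop exactly at that step and only list ingredients that ``must be combined''; this is the entire content of the theorem, and without it the proof is not complete.

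Concretely, the obstruction you raise is real: $\di'$ is not additive on disjoint unions, so $\di'B(d_j,\rho_j)$ could a priori nest inside a larger slot $B(z_k,\rho_k)$ while compensating mass fills $B(z_j,\rho_j)$, and neither measure preservation nor slot separation alone rules this out. The paper closes the gap by a specific nested-cylinder design that you do not have. The balls $B_i$ are arranged iteratively with \emph{strictly increasing} radii $r_i=1+i/m'$, each one placed outside an enlarged cylinder $C_{i-1}+4B^n$ around its predecessors; extra balls $D_j$ are placed at the tips of the coordinate rays so that, from $\|1_{\di K}-1_{\heartsuit K}\|_1$ being small and $\di B_c=B_c$ for the circumball, one gets $\|\psi_{\heartsuit}(o)|H\|\le 1$. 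The contraction property then gives $\heartsuit B_i\subset B(\psi_{\heartsuit}(o),\rho_i+1)\subset C_i+2B^n$, and since the only components of $\di K$ in $C_i+2B^n$ are $\di B_1,\ldots,\di B_i$ (all of radius $\le r_i$), a one-sided volume count forces $\heartsuit B_i$ to meet $\di B_i$. It is this \emph{a priori confinement of $\heartsuit B_i$ to the cylinder containing only smaller slots} that removes the ambiguity; nothing in your construction plays this role. Your back-up idea of threading ``thin strings of small balls'' to preserve bead order is speculative and faces the same problem recursively, since the images of the small beads are themselves unconstrained.

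There is also a layout mismatch you should be aware of. In the paper, the balls $B_i$ have radii in $[1,2]$ and lie far from $B^n$, at the ends of rays parallel to $e_1$; the control on $B^n$ is obtained by interpolating along the chords $[x_{2i-1},x_{2i}]$ through Lemma~\ref{lem_almost_affine}, which uses that $\di$ respects $H$-cylinders to get $\|\psi_\di(x_{2i-1})-\psi_\di(x_{2i})\|=\|x_{2i-1}-x_{2i}\|$. Your plan instead centers the test balls at net points inside $B^n$, forcing their radii to shrink with the mesh; then the slots are close and the volume gaps are tiny, which strains both the separation requirement and the volume-discrimination step and, more importantly, offers no analogue of the nested-cylinder confinement argument. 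So while the skeleton (Lemmas~\ref{lemm5}, \ref{lem_balls_close}, \ref{lem11824}, diagonalization, Lipschitz upgrade) matches, the crucial geometric construction is missing and the acknowledged gap is not a cosmetic one.
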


\begin{proof}
We shall ignore sets of measure zero in the proof. Let $\ee>0$. It will suffice to show that there is a map $\heartsuit\in\cJ'$ such that its associated contraction $\psi_{\heartsuit}$ from \eqref{eq:psi1} satisfies $\|\psi_{\heartsuit}(x)-\psi_\di(x)\|\le \ee$ for all $x\in B^n$.

Since $\di$ respects $H$-cylinders, we have $(\di B^n)|H=
B(\psi_\di(o),1)|H\subset B^n+H^{\perp}$, so $\psi_\di(o)\in H^\perp$. By applying a translation by $-\psi_\di(o)$ to the entire construction, if necessary, we may assume that $\psi_\di(o)=o$. We may also assume that $H=e_n^\perp$.  Then from the second relation in \eqref{eq:omregn}, we have $\varphi_{\di}(0)=\langle \psi_{\di}(o),e_n\rangle=\langle o,e_n\rangle=0$, where $\varphi_{\di}$ is the contraction associated with $\di$ from \eqref{ball}.

We start by constructing a set $K\in \cF_n^n$ to which the weak approximation property will be applied. Let $2\le m\in \N$ satisfy $\sqrt{n-1}/(2m)\le \ee/4$, and consider the family $\{R_1,\ldots,R_{m'}\}$ of $m'=2(2m+1)^{n-1}$ rays
\[
\{(t,i_2/m,\ldots,i_n/m):t\ge 0\},\qquad
\{(t,i_2/m,\ldots,i_n/m):t\le 0\},
\]
$i_2,\ldots,i_n=-m,\dots,m$, each emanating from the hyperplane $e_1^{\perp}$ and parallel to $e_1$.  The order of enumeration is irrelevant here, apart from the convention that $R_{2i-1}\cup R_{2i}$ is a line for $i=1,\dots,m'/2$. For each ray, we now define iteratively a ball with center on that ray.

Let $B_{1}=B(x_1,r_1)$, where $x_1\in R_1$, $r_1=1+1/m'$, and where $B_{1}$ and the cube $[-1,1]^n$ meet only on their boundaries.  Suppose that $i=1,\dots,m'-1$ and we have defined balls $B_j$, $j=1,\dots,i$ such that $B_j$ has its center on $R_j$.  Let $C_i=\rho_i B^n+\myspan\{e_n\}$ be the smallest cylinder with axis parallel to $e_n$ containing the balls $B_1,\dots,B_i$, and let $B_{i+1}=B(x_{i+1},r_{i+1})$, where $x_{i+1}\in R_{i+1}$, $r_{i+1}=1+(i+1)/m'$, and where $B_{i+1}$ and the enlarged cylinder $C_i+4B^n$ meet only on their boundaries.

Let $C=\rho B^n+\myspan\{e_n\}$ be the smallest cylinder with axis parallel to $e_n$ containing the balls $B_1,\dots,B_{m'}$. Let $S_1,\dots,S_{2(n-1)}$ be the rays emanating from the origin and spanned by $\pm e_1,\dots,\pm e_{n-1}$, and for $j=1,\dots,2(n-1)$, let $D_j=B(y_j,(n-1)^{-1/2})$, where $y_j\in S_j$ and where $D_j$ and the enlarged cylinder $C+4B^n$ meet only on their boundaries.

Define
\[
K_0= {\textstyle \bigcup}_{j=1}^{2(n-1)} D_j ~~\quad{\text{and}}~~\quad
K=K_0\cup {\textstyle \bigcup}_{i=1}^{m'} B_i.
\]
The set $K$ is a finite disjoint union of balls with radii in $\{(n-1)^{-1/2}\}\cup[1,2]$. It is easily checked that by construction, if $K_1$ and $K_2$ are different components (balls) of $K$, then
\begin{equation}\label{eq:disjointn}
d(K_1|H,K_2|H)\ge 4,
\end{equation}
where $d(A,B)=\inf\{\|a-b\|: a\in A, b\in B\}$ is the usual distance between sets. Let $B_c=B(o,\rho_c)$ be the circumball of $K$ and note that all the balls in $K$ are contained in its interior, with the exception of the balls $D_j$, and that $D_j\cap\partial B(o,\rho_c)=S_j\cap \partial B(o,\rho_c)$, $j=1,\dots,2(n-1)$.

Next, we determine $\di K$. Since $\di\in \cJ_H(\cF_n^n)$ and $H=e_n^{\perp}$, relation
\eqref{eq:omregn} shows that if $x\in e_n^{\perp}$, then $\psi_{\di}(x)=x+\varphi_{\di}(0)e_n=x$. It follows from \eqref{eq:psi1} that $\di D_j=D_j$, $j=1,\dots, 2(n-1)$. Therefore $D_j=\di D_j\subset \di K_0$ by monotonicity, which gives $K_0\subset \di K_0$ and hence $\di K_0=K_0$ by the measure-preserving property of $\di$.

If $B=B(x,r)$ is a component of $K$, then $\di B\subset (B|H)+H^\perp$ as $\di$ respects $H$-cylinders, so \eqref{eq:disjointn} yields
\begin{equation}\label{eq:disjointn_din}
d\big((\di K_1)|H,(\di K_2)|H\big)\ge 4
\end{equation}
for different components $K_1, K_2$ of $K$. Consequently, the images under $\di$ of the components of $K$ are disjoint balls. In particular, \eqref{eq:psi1} and \eqref{eq:omregn} imply that the components of $\di \left(\cup_{i=1}^{m'} B_i\right)$ are the disjoint balls
$B\big(x_i^\di,r_i\big)$, with
\[
x_i^\di=x_i+\left(\varphi_\di(\langle x_i,e_n\rangle)-\langle x_i,e_n\rangle\right)e_n
\]
for $i=1,\dots,m'$, and $\di K=K_0\cup\cup_{i=1}^{m'} B(x_i^\di,r_i).$

Choose $\ee_1>0$ such that
\begin{equation}\label{eqepchoose}
\ee_1<\min\left\{1,\frac{{\cH}^n(D_1)}{2}, \frac{5\kappa_{n-1}}{n}, \frac{n\kappa_n}{m'}, \frac{(\kappa_{n-1}\ee)^2}{50n^2\rho_c}\right\}.
\end{equation}
(The list of quantities in the minimum is for convenience; in fact, one can show that the second quantity is less than the third.) By assumption, there is a map $\heartsuit\in \cJ'$ such that
$\|1_{\di K}-1_{\heartsuit K}\|_1\le \ee_1$.

The circumball $B_c=B(o,\rho_c)$ of $K$ has its center at the origin, so $\psi_\di(o)=o$ gives $\di B_c=B_c$. Since $\heartsuit K\subset \heartsuit B_c=B(\psi_{\heartsuit}(o),\rho_c)$ we obtain
\begin{align*}
\cH^n\left(D_j\setminus B(\psi_{\heartsuit}(o),\rho_c)\right)
&=\cH^n\left((\di D_j)\setminus \heartsuit B_c\right)\\
&\le \cH^n((\di K)\setminus \heartsuit K)\le \|1_{\di K}-1_{\heartsuit K}\|_1\le \ee_1<{\cH}^n(D_j)/2
\end{align*}
for $j=1,\dots,2(n-1)$.  It follows that $B(\psi_{\heartsuit}(o),\rho_c)$ contains the centers of all the balls in $K_0$, that is, the points $\pm (\rho_c-(n-1)^{-1/2}) e_j$, for $j=1,\dots,n-1$.  Thus each vector $\psi_{\heartsuit}(o)\pm (\rho_c-(n-1)^{-1/2}) e_j$, $j=1,\dots,n-1$, has Euclidean norm less than or equal to $\rho_c$.  By considering their projections on the $j$th coordinate axis, we see that
$$|\langle \psi_{\heartsuit}(o), e_j\rangle|\le \frac1{\sqrt{n-1}}$$
for $j=1,\dots,n-1$.  From this and $\psi_{\heartsuit}(o)|H=\sum_{j=1}^{n-1}\langle \psi_{\heartsuit}(o),e_j\rangle e_j$, we obtain
\begin{equation}\label{eq:circumcentern}
\|\psi_{\heartsuit}(o)|H\|\le 1.
\end{equation}
Choose $i\in \{1,\ldots, m'\}$. The ball $\heartsuit B_i$ must meet the interior of at least one component of $\di K$, as otherwise, we would have the contradiction
\begin{eqnarray*}
{\cH}^n(B_i) = {\cH}^n(\heartsuit B_i)&=&\cH^n\left( (\heartsuit B_i)\setminus \di K\right)\\
&\le& \cH^n\left( (\heartsuit K)\setminus \di K\right)
\le \|1_{\di K}-1_{\heartsuit K}\|_1\le
\ee_1< {\cH}^n(D_1)/2\le {\cH}^n(B_i)/2.
\end{eqnarray*}

We claim that $\di B_i$ is the unique component of $\di K$ that meets $\heartsuit B_i$. To see this, first recall that $B_i$ is contained in the cylinder $C_i=\rho_i B^n+\myspan\{e_n\}$ and that $|\langle x_i,e_n\rangle|\le 1$. This gives $B_i\subset B(o,\rho_i+1)$, since
\[
\|x_i\|+r_i\le \sqrt{(\rho_i-r_i)^2+1}+r_i\le (\rho_i-r_i)+1+r_i=\rho_i+1.
\]
Using \eqref{eq:circumcentern}, we obtain
\begin{align*}
\heartsuit B_i&\subset
\heartsuit B\left(o,\rho_i+1\right)
=B\left(\psi_{\heartsuit}(o),\rho_i+1\right)\subset
(\rho_i+2) B^n+\myspan\{e_n\}=C_i+2B^n.
\end{align*}
By the construction of $K$, the only components of $K$ contained in the cylinder $C_i+2B^n$ are $B_1,\ldots,B_i$, and since $\di$ respects $H$-cylinders, the only components of $\di K$ contained in  $C_i+2B^n$ are  $\di B_1,\ldots,\di B_i$.

Because $\heartsuit B_i$ has radius at most 2, by \eqref{eq:disjointn_din} it meets the interior of exactly one component of $(C_i+2B^n)\cap \di K$.  Let $\di B'$ be this component, and suppose that $B'\neq B_i$. Then our construction ensures that the radius $r$ of $B'$ satisfies $r_i-r\ge 1/m'$.  Thus, using $r_i,r\ge 1$ and $(\heartsuit B_i)\cap \di K=(\heartsuit B_i)\cap \di B'$, we find that
\begin{align*}
\frac{n\kappa_n}{m'}&\le  \kappa_n(r_i-r)\sum_{k=0}^{n-1}r_i^kr^{n-1-k}= \kappa_n(r_i^n-r^n)
=\cH^n(\heartsuit B_i)-\cH^n(\di B')
\le \cH^n\left( (\heartsuit B_i)\setminus\di B'\right)\nonumber\\
& = \cH^n((\heartsuit B_i)\setminus \di K)\le \cH^n\left( (\heartsuit K)\setminus\di K\right)\le \|1_{\di K}-1_{\heartsuit K}\|_1\le \ee_1,
\end{align*}
contradicting the choice \eqref{eqepchoose} of $\ee_1$. Therefore, $(\di B_i)\cap \heartsuit B_i\neq\emptyset$, as claimed.

Since $\di B_i$ meets $\heartsuit B_i$, Lemma~\ref{lem_balls_close} with $x=x_i$ and $r=r_i$ implies that
\begin{equation}\label{first_fixed_pointn}
\left\|\psi_{\di}(x_i)-\psi_{\heartsuit}(x_i)\right\|
\le \frac{n}{2r_i^{n-1}\kappa_{n-1}}\|1_{\di B_i}-1_{\heartsuit B_i}\|_1
\le\frac{n}{2\kappa_{n-1}} \|1_{\di K}-1_{\heartsuit K}\|_1\le \frac{n\ee_1}{2\kappa_{n-1}}.
\end{equation}

For $i=1,\dots,m'/2$, the points $x=x_{2i-1}$ and $x'=x_{2i}$ belong to the line $\ell_i=R_{2i-1}\cup R_{2i}$, which is parallel to $H$. Since $\di$ respects $H$-cylinders, \eqref{eq:omregn} implies that $\|\psi_\di(x)-\psi_\di(x')\|=\|x-x'\|$, so $\psi_\di$ is affine when restricted to $[x,x']$.  The last equality and \eqref{first_fixed_pointn} yield
\begin{equation}\label{eq:rho-1}
\|\psi_{\heartsuit}(x)-\psi_{\heartsuit}(x')\|\ge \|x-x'\|-\frac{n\ee_1}{\kappa_{n-1}}.
\end{equation}
Let $z=(1-t)x+tx'$, where $0<t<1$.  Then \eqref{eq:rho-1} allows us to apply Lemma~\ref{lem_almost_affine} with $\ee=n\ee_1/\kappa_{n-1}$ and $\psi=\psi_{\heartsuit}$, and \eqref{first_fixed_pointn}, to obtain
\begin{eqnarray}
\left\|	\psi_{\heartsuit}(z)-\psi_{\di}(z)\right\|
&=& \left\|	\psi_{\heartsuit}(z)-((1-t)\psi_{\di}(x)+t\psi_{\di}(x'))\right\|\nonumber\\
&\le& \left\|\psi_{\heartsuit}(z)-((1-t)\psi_{\heartsuit}(x)
+t\psi_{\heartsuit}(x'))\right\|+\nonumber\\
& &+(1-t)\left\|\psi_{\heartsuit}(x)-\psi_{\di}(x)\right\|+
t\left\|\psi_{\heartsuit}(x')-\psi_{\di}(x')\right\|\nonumber\\
&\le &\sqrt{\frac{n\ee_1}{\kappa_{n-1}}\left(2L+\frac{n\ee_1}{\kappa_{n-1}}
\right)}+\frac{n\ee_1}{2\kappa_{n-1}}.
\label{atlastn}
\end{eqnarray}
By \eqref{eq:disjointn}, we have $\|x-x'\|\ge 6$, and the choice \eqref{eqepchoose} of $\ee_1$ gives $n\ee_1/\kappa_{n-1}\le 5$.  These together imply the second inequality in the following chain:
\[
\ee_1\le 1\le \|x-x'\|-\frac{n\ee_1}{\kappa_{n-1}}\le L=\|\psi_{\heartsuit}(x)-\psi_{\heartsuit}(x')\|\le \|x-x'\|\le 2\rho_c.
\]
The other relations follow from the choice \eqref{eqepchoose} of $\ee_1$, \eqref{eq:rho-1}, and the fact that $\psi_\heartsuit$ is a contraction.  It is straightforward to check that for integer $n\ge 2$, we have $n/\kappa_{n-1}\ge 3/\pi$, the value when $n=3$ or 4, and hence $2\le (3n)/\kappa_{n-1}$.  All these estimates,  \eqref{atlastn}, and the choice \eqref{eqepchoose} of $\ee_1$ yield
\begin{equation}\label{eq6425}\left\|	\psi_{\heartsuit}(z)-\psi_{\di}(z)\right\|\le\sqrt{\frac{n\ee_1}{\kappa_{n-1}}\left(\frac{3nL}{\kappa_{n-1}}
+\frac{nL}{\kappa_{n-1}}\right)}+\frac{n\sqrt{L\ee_1}}{2\kappa_{n-1}}
= \frac{5n\sqrt{L\ee_1}}{2\kappa_{n-1}}\le \frac{5n\sqrt{2\rho_c\ee_1}}{2\kappa_{n-1}}\le\frac{\ee}{2}.
\end{equation}

By construction, $[-1,1]^n\cap \ell_i\subset [x, x']$, so \eqref{eq6425} holds for all $z\in [-1,1]^n\cap \cup_{i=1}^{m'/2}\ell_i$. Now for any $w\in B^n$, there is an $i\in\{1,\dots,m'/2\}$ and a $z\in [-1,1]^n\cap\ell_i$ with $\|w-z\|\le \sqrt{n-1}/(2m)\le \ee/4$.  Since $\psi_{\heartsuit}$ and $\psi_\di$ are contractions, \eqref{eq6425} gives
\begin{align*}
\left\|	\psi_{\heartsuit}(w)-\psi_\di (w)\right\|
&\le \left\|\psi_{\heartsuit}(w)-\psi_{\heartsuit} (z)\right\|
+\left\|\psi_{\heartsuit} (z)-\psi_{\di}(z)\right\|+
\left\|\psi_{\di}(z)-\psi_\di (w)\right\|\\
&\le \left\|\psi_{\heartsuit}(z)-\psi_\di (z)\right\|
+2\left\|w-z\right\|\le \ee,
\end{align*}
as required.
\end{proof}

\begin{cor}\label{corBrockWeak}
Let $\cF_n^n\subset \cE\subset \cL^n$. If $H$ is a hyperplane in $\R^n$, the Brock set map $\di_{B_H}$ with parameter $0<b<1$ cannot be weakly approximated on $\cE$ by finite compositions of polarizations.
\end{cor}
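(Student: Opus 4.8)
The plan is to deduce this quickly from Theorem~\ref{thm:not_weaklyfiniteBalls} together with Theorem~\ref{lem5}, following the same scheme by which Corollary~\ref{cor42} was obtained from Theorem~\ref{lem5} alone. First I would reduce to the case $\cE=\cF_n^n$: a weak approximation of $\di_{B_H}$ on $\cE$ restricts to one on the smaller class $\cF_n^n$, so it suffices to rule out weak approximability of $\di_{B_H}$ on $\cF_n^n$ by finite compositions of polarizations. Let $\cJ'$ denote the class of all finite compositions of polarizations, regarded as maps from $\cF_n^n$ to $\cL^n$. I would check that $\cJ'$ satisfies the hypotheses of Theorem~\ref{thm:not_weaklyfiniteBalls}: each polarization is monotonic, measure preserving, and maps balls to balls, and (being defined on all of $\cL^n$) sends $\cF_n^n$ into $\cL^n$, although not into $\cF_n^n$, since intersections of balls need not be balls; these properties pass to finite compositions, so $\cJ'\subset\cJ(\cF_n^n)$. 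Moreover, polarizations are smoothing (Example~\ref{ex429}(ii)), so by Lemma~\ref{lem11_inependent}(i) the associated contraction from \eqref{eq:psi1} is independent of $r>0$; and for maps with $r$-independent contractions the contraction of $\di_2\circ\di_1$ is $\psi_{\di_2}\circ\psi_{\di_1}$, again $r$-independent, so the same holds for every member of $\cJ'$. Finally, $\di_{B_H}\in\cJ_H(\cL^n)$ (see the introduction and Example~\ref{ex429}(v)), hence its restriction to $\cF_n^n$ lies in $\cJ_H(\cF_n^n)$.

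Assuming, toward a contradiction, that $\di_{B_H}$ is weakly approximable on $\cF_n^n$ by maps in $\cJ'$, Theorem~\ref{thm:not_weaklyfiniteBalls} applied with $\di=\di_{B_H}$ and this $\cJ'$ yields a sequence $(\di_k)$ in $\cJ'$ whose associated contractions $\psi_{\di_k}$ converge uniformly, hence pointwise, on $B^n$ to $\psi_{\di_{B_H}}$. Since each $\di_k$ is a finite composition of polarizations and the contractions are $r$-independent, $\psi_{\di_k}$ is the corresponding finite composition of contractions associated with the individual polarizations. Thus $\psi:=\psi_{\di_{B_H}}$ is a contraction approximable pointwise on $B^n$ by finite compositions of contractions associated with polarizations.

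To close the argument I would invoke Theorem~\ref{lem5}. By Example~\ref{ex310}(v), with $H=u^\perp+t_0u$, the map $\psi_{\di_{B_H}}(x)=x+(b-1)(\langle x,u\rangle-t_0)u$ sends $x|u^\perp+su$ to $x|u^\perp+\bigl(bs+(1-b)t_0\bigr)u$; it is affine, with linear part the identity on $u^\perp$ and multiplication by $b$ along $u$, hence of determinant $b$. As $0<b<1$, this map is injective, so $\psi_{\di_{B_H}}|_{B^n}$ is injective and $\psi_{\di_{B_H}}(B^n)$ is an ellipsoid with ${\cH}^n(\psi_{\di_{B_H}}(B^n))=b\,\kappa_n<\kappa_n$. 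But Theorem~\ref{lem5} then forces ${\cH}^n(\psi_{\di_{B_H}}(B^n))=\kappa_n$, a contradiction. Therefore no such weak approximation exists, on $\cF_n^n$ or on any larger $\cE$.

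The genuinely hard work here lies in Theorem~\ref{thm:not_weaklyfiniteBalls} (and in Theorem~\ref{lem5}), which I am allowed to use; the remaining deduction is essentially routine. The one point I expect to require care is the verification that the class $\cJ'$ of finite compositions of \emph{arbitrary} polarizations really does meet the hypotheses of Theorem~\ref{thm:not_weaklyfiniteBalls} — in particular the $r$-independence of the associated contractions and the identification of $\psi_{\di_k}$ as a finite composition of the contractions attached to the individual polarizations, which is exactly what is needed in order to feed the limit into Theorem~\ref{lem5}.
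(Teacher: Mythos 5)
Your proposal is correct and follows the same route as the paper's own proof: restrict to $\cF_n^n$, apply Theorem~\ref{thm:not_weaklyfiniteBalls} with $\cJ'$ the class of finite compositions of polarizations (whose associated contractions are $r$-independent) to upgrade weak approximability to uniform convergence of the contractions $\psi_{\di_k}$ to $\psi_{\di_{B_H}}$ on $B^n$, and then derive a contradiction from Theorem~\ref{lem5} using the injectivity of $\psi_{\di_{B_H}}$ and the fact that $\cH^n(\psi_{\di_{B_H}}(B^n))=b\,\kappa_n<\kappa_n$. Your additional care in verifying that $\cJ'\subset\cJ(\cF_n^n)$, that $r$-independence passes to compositions, and that $\psi_{\di_k}$ really is a composition of contractions associated with polarizations (so that Theorem~\ref{lem5} is applicable to the limit) is exactly the bookkeeping the paper leaves implicit.
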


\begin{proof}
Consider the class of finite compositions of polarizations (not necessarily with respect to parallel hyperplanes). The associated contractions from \eqref{eq:psi1} of polarizations are independent of $r>0$ and the associated contractions of their compositions inherit this property.

If $\di_{B_H}$ could be weakly approximated on $\cE$ by finite compositions of polarizations, it could a fortiori be weakly approximated on $\cF_n^n$ by their restrictions to $\cF_n^n$. Theorem~\ref{thm:not_weaklyfiniteBalls} would then yield a sequence $(\di_k)$ of finite compositions of polarizations such that the associated contractions $\psi_{\di_k}$ converge uniformly on $B^n$ to the contraction $\psi_{\di_{B_H}}$ associated with $\di_{B_H}$. Since  $\psi_{\di_{B_H}}$ is injective, Theorem~\ref{lem5} would imply that ${\cH}^n(\psi_{\di_{B_H}}(B^n))=\kappa_n$, a contradiction, as $b<1$.
\end{proof}

\begin{cor}\label{corBrockWeakfunctions}
Let $X\subset \cV(\R^n)$ contain all nonnegative $C^{\infty}$ functions with compact support, and let $1\le p<\infty$. If $H$ is a hyperplane in $\R^n$, the Brock rearrangement $B_H$ with parameter $0<b<1$ is not weakly approximable in $L^p$ on $X$ by finite compositions of polarizations.
\end{cor}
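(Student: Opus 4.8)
The plan is to derive this from Corollary~\ref{corBrockWeak} by passing from the function level to the set-map level via Theorem~\ref{thm514}, and from the class $X$ to $L^p_+(\R^n)$ via Lemma~\ref{lemLpclosure}, arguing by contradiction. So suppose that $B_H$ is weakly approximable in $L^p$ on $X$ by finite compositions of polarizations, and let $\cW$ denote the family of all finite compositions of polarizations. Each polarization maps $\cV(\R^n)$ to $\cV(\R^n)$ and is a rearrangement, and rearrangements are closed under composition, so $\cW$ is a family of rearrangements from $\cV(\R^n)$ to $\cV(\R^n)$, as required by both Lemma~\ref{lemLpclosure} and Theorem~\ref{thm514}. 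Let $X_0\subset X$ be the set of nonnegative $C^\infty$ functions with compact support. Then $X_0\subset L^p_+(\R^n)$, and weak approximability of $B_H$ in $L^p$ on $X$ by maps in $\cW$ immediately gives the same statement on the smaller class $X_0$.

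Next I would record the elementary density fact that $\cl_{\!p} X_0=L^p_+(\R^n)$: given $g\in L^p_+(\R^n)$, the truncations $g\cdot 1_{B(o,R)}$ converge to $g$ in $L^p$ as $R\to\infty$, and each truncation is approximated in $L^p$ by nonnegative $C^\infty$ functions with compact support (for instance by convolution with nonnegative mollifiers). Since $B_H$ is a rearrangement from $\cV(\R^n)$ to $\cV(\R^n)$ (Example~\ref{ex429}(v)), Lemma~\ref{lemLpclosure} applies with the class there taken to be $X_0$, and upgrades weak approximability of $B_H$ in $L^p$ on $X_0$ to weak approximability of $B_H$ in $L^p$ on $\cl_{\!p} X_0=L^p_+(\R^n)$, still by maps in $\cW$.

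I would then invoke the implication (i)$\Rightarrow$(ii) of Theorem~\ref{thm514} for weak approximability to conclude that $\di_{B_H}$ is weakly approximable on $\cL^n$ by maps in $\{\di_W:W\in\cW\}$. To identify this class, note that for rearrangements $T,S$ one has $\di_{T\circ S}=\di_T\circ\di_S$, essentially: this follows from Proposition~\ref{fromSetToFct} (which gives $S1_A=1_{\di_S A}$ essentially), the fact that a monotonic and equimeasurable map respects essential equality of functions, and the definition \eqref{eqIndic}. Since $\di_{P_H}$ is the polarization set map of \eqref{pol2}, every member of $\{\di_W:W\in\cW\}$ is, essentially, a finite composition of polarizations of sets, and passing to an essentially equal map leaves the $L^1$ distances in the definition of approximability unchanged. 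Hence $\di_{B_H}$ is weakly approximable on $\cL^n$ by finite compositions of polarizations. As $\cF_n^n\subset\cL^n$, this contradicts Corollary~\ref{corBrockWeak} with $\cE=\cL^n$, which proves the statement.

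In fact there is no genuine obstacle at this last stage: all the geometric substance has already been spent in Theorem~\ref{lem5}, Theorem~\ref{thm:not_weaklyfiniteBalls}, and Corollary~\ref{corBrockWeak}, and the present corollary is a harvesting of those results through the general machinery of Section~\ref{Approxrearrange}. The only points that need a little care are the reduction from the arbitrary class $X$ (assumed merely to contain the nonnegative $C^\infty_c$ functions) to $L^p_+(\R^n)$ through the $L^p$-closure in Lemma~\ref{lemLpclosure}, and the translation of approximating finite compositions of polarizations at the function level into the same at the set-map level, i.e.\ the composition identity $\di_{T\circ S}=\di_T\circ\di_S$ for induced set maps.
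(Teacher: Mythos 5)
Your proof is correct and follows essentially the same route as the paper: reduce to $L^p_+(\R^n)$ via Lemma~\ref{lemLpclosure}, pass to set maps via Theorem~\ref{thm514}, and contradict Corollary~\ref{corBrockWeak}. You are in fact a bit more careful than the paper's own proof in two spots: you explicitly pass to $X_0\subset L^p_+(\R^n)$ before invoking Lemma~\ref{lemLpclosure} (whose hypothesis requires the class to lie in $L^p_+(\R^n)$, which the paper's $X\subset\cV(\R^n)$ need not), and you explicitly justify the identity $\di_{T\circ S}=\di_T\circ\di_S$ needed to interpret $\{\di_W:W\in\cW\}$ as finite compositions of polarization set maps, a step the paper leaves implicit; otherwise the only cosmetic difference is that you use $\cL^n$ from item (ii) of Theorem~\ref{thm514} where the paper uses $\mathcal C^n$ from item (iii), and both contain $\cF_n^n$.
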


\begin{proof}
Let $1\le p<\infty$ and suppose that $B_H$ is weakly approximable in $L^p$ on $X$ by finite compositions of polarizations. As $X$ contains all nonnegative $C^{\infty}$ functions with compact support, the usual approximation argument based on mollifiers shows that $\cl_p X=L_+^p(\R^n)$.  By Lemma~\ref{lemLpclosure} (with $T=B_H$ and $\cW$ the class of finite compositions of polarizations), $B_H$ is weakly approximable in $L^p$ on $L_+^p(\R^n)$ by finite compositions of polarizations.  Then Theorem~\ref{thm514} implies that $\di_{B_H}$ is weakly approximable on $\mathcal C^n$ by finite compositions of polarizations.  Since ${\cF}^n_n\subset\mathcal C^n$, this contradicts  Corollary~\ref{corBrockWeak}.
\end{proof}

\section{Approximating the Solynin set map}\label{Solynin}

Here we aim to show that if $H$ is a hyperplane in $\R^n$, then the Solynin rearrangement $So_H$ with respect to $H$ is approximable in $L^p$ on $L^p_+(\R^n)$ by finite compositions of polarizations.  Most of the work will be proving that the associated set map $\di_{So_H}$ is approximable on ${\mathcal{C}}^n$ by finite compositions of polarizations.  Since both the Solynin set map and polarization with respect to $H$ act fiber-wise on lines orthogonal to $H$, it suffices to prove the latter result when $n=1$ and $H=\{o\}$, with the usual orientation of $\R$.

We shall need some notation and lemmas. If $a\in \R$, let $P_a$ denote polarization in $\R$ with respect to $a$ and the usual orientation of $\R$. For $A\subset \R$, $m\in \N$, $k\in\Z$, and $-2^{2m}\leq k\leq 0$, let
\[
 A_{m,k}=P_{k/2^m}\circ P_{(k-1)/2^m}\circ\dots\circ P_{-2^m+1/2^m}\circ P_{-2^m}A.
\]

Regarding the existence of $t$ in part (ii) of the following lemma, we observe that if $x\in A_{m,j}$ and $x\geq0$ then $x\in A_{m,i}$ for all $i\geq j$. Since $x\notin A$ and $x\in A_{m,0}$, the existence of such a $t$ follows.

\begin{lem}\label{welldistributed}
\noindent{\rm{(i)}}  Assume $x\in A_{m,0}$,  $x\in(k/2^m, (k+1)/2^m]$ for some $k\in\Z$, $-2^{2m}-1<k\leq-1$. The set $A_{m,0}$ also contains
\begin{equation}\label{welldist-keven}
x+\frac{2}{2^m}, x+\frac{4}{2^m},\dots, x-\frac{k}{2^m}, -x+\frac{k+2}{2^m}, -x+\frac{k+4}{2^m}, \dots,  -x
\end{equation}
when $k$ is even, and
\begin{equation}\label{welldist-kodd}
x+\frac{2}{2^m},x+\frac{4}{2^m},\dots, x-\frac{k+1}{2^m}, -x+\frac{k+1}{2^m}, -x+\frac{k+3}{2^m}, \dots, -x
\end{equation}
when $k$ is odd. In particular, for $k$ even and $s=2, 4,\dots,-(k+2)$,
\begin{equation}\label{inclusions-}
A_{m,0}\supset \left(A_{m,0}\cap\left(\frac{k}{2^m},\frac{k+2}{2^m}\right]\right) + \frac{s}{2^m}, \quad  A_{m,0}\supset-\left(A_{m,0}\cap\left(\frac{k}{2^m},\frac{k+2}{2^m}\right]\right) - \frac{s}{2^m}.
\end{equation}

\noindent{\rm{(ii)}} Assume $x\in A_{m,0}\setminus A$ and $x\in[k/2^m, (k+1)/2^m)$ for some $k\in\Z$, $k\geq 1$. Let $t=t(x,m)\in\Z$, $t\le 0$, be the first index such that $x\in A_{m,t}$ and assume $2t-k>-2^{2m}$.  The set $A_{m,0}$ also contains
\begin{equation}\label{welldist+keven}
  -x+\frac{2}{2^m},-x+\frac{4}{2^m},\dots,-x+\frac{k}{2^m},
 x-\frac{k}{2^m}, x-\frac{k-2}{2^m} \dots,  x-\frac2{2^m}
\end{equation}
when $k$ is even, and
\begin{equation}\label{welldist+kodd}
  -x+\frac{2}{2^m},-x+\frac{4}{2^m},\dots,-x+\frac{k+1}{2^m},
 x-\frac{k-1}{2^m}, x-\frac{k-3}{2^m} \dots,  x-\frac2{2^m}
\end{equation}
when $k$ is odd. In particular, if $k\ge 2$ is even and the condition on $t=t(x,m)$ above holds uniformly for any $x\in(A_{m,0}\setminus A)\cap[k/2^m, (k+1)/2^m)$ and for any $x\in(A_{m,0}\setminus A)\cap[(k+1)/2^m, (k+2)/2^m)$, then, for $s=2,4,\dots,k$,
\begin{equation}\label{inclusions+}
A_{m,0}\supset \left(\big(A_{m,0}\setminus A\big)\cap\left(\frac{k}{2^m},\frac{k+2}{2^m}\right]\right) - \frac{s}{2^m}, \quad  A_{m,0}\supset-\left(\big(A_{m,0}\setminus A\big)\cap\left(\frac{k}{2^m},\frac{k+2}{2^m}\right]\right) + \frac{s}{2^m}.
\end{equation}
\end{lem}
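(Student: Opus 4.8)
Set $a_j=j/2^m$, so that for $j\in\{-2^{2m},\dots,0\}$ one has $A_{m,j}=P_{a_j}(A_{m,j-1})$ with the convention $A_{m,-2^{2m}-1}=A$ (note $-2^m=a_{-2^{2m}}$). For $y,a\in\R$ write $y^a=2a-y$. One-dimensional polarization obeys, for $C\subset\R$: (F1) if $y\ge a$ then $y\in P_aC$ iff $y\in C$ or $y^a\in C$; (F2) if $y\le a$ then $y\in P_aC$ iff $y\in C$ and $y^a\in C$. Two consequences are used repeatedly: $y\in P_aC$ with $y\le a$ forces $\{y,y^a\}\subset C$ (``pulling a point back across a mirror it lies below''), and $y\in C$ with $y\le a$ forces $y^a\in P_aC$ (``a point reflects up across a higher mirror''). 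First I would dispose of the two ``In particular'' claims: every $y$ in $A_{m,0}\cap(k/2^m,(k+2)/2^m]$ lies in one of the two adjacent cells $(k/2^m,(k+1)/2^m]$ (index $k$, even) or $((k+1)/2^m,(k+2)/2^m]$ (index $k+1$, odd); applying (i) in the first case and (i) with $k$ replaced by $k+1$ in the second, then retaining only the listed points common to both, yields exactly $y+s/2^m,-y-s/2^m\in A_{m,0}$ for even $s$ with $2\le s\le -(k+2)$, which is \eqref{inclusions-}, and likewise \eqref{inclusions+} follows from \eqref{welldist+keven}--\eqref{welldist+kodd}, the stated uniformity of $t(\cdot,m)$ on each cell being precisely what lets us invoke the second half of (ii). So it suffices to prove \eqref{welldist-keven}--\eqref{welldist+kodd}.

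For part (i) (here $x\le 0$) the plan is a \emph{backward pass} followed by a \emph{forward pass}. Backward: since $a_j\ge(k+1)/2^m\ge x$ for $j=0,-1,\dots,k+1$, the point $x$ lies weakly below each of these mirrors, so applying (F2) at levels $0,-1,\dots,k+1$ in turn gives $x\in A_{m,\ell}$ for all $\ell\in\{k,\dots,0\}$ and $x^{a_j}=-x+2j/2^m\in A_{m,j-1}$ for each $j\in\{k+1,\dots,0\}$. Forward: starting from $x\in A_{m,k}$ and feeding in the copies $x^{a_j}$ deposited along the way, push everything forward through $P_{a_{k+1}},\dots,P_{a_0}$ and track the reflected copies. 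Writing $x=(k+1)/2^m-d$ with $d\in[0,1/2^m)$, a copy just below the mirror $a_{k+1}$ reflects to $a_{k+1}+d$ and then bounces alternately off the equally spaced mirrors $a_{k+2},a_{k+3},\dots$ (the paper-folding billiard), while the deposited copies at $a_0,a_{-1},\dots$ supply the reflections in those mirrors. Maintaining, by a finite induction on the number of mirrors crossed, the invariant ``$A_{m,\ell}$ contains $\{x+2i/2^m\}\cup\{-x+2i/2^m\}$ for $i$ in an explicit $\ell$-dependent range'' — each step a one-line application of (F1)/(F2) — yields at $\ell=0$ precisely the list \eqref{welldist-keven} if $k$ is even and \eqref{welldist-kodd} if $k$ is odd, the index ranges being governed by the number of mirrors between the cell of $x$ and the origin.

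Part (ii) (here $x\ge 0$, $x\notin A$) is handled the same way, but the backward pass is replaced by the remark preceding the lemma: $x$ is fixed by every later $P_{a_j}$ once present, so the first level $t=t(x,m)$ with $x\in A_{m,t}$ exists with $a_t<x$, and by (F1) (max side, $x\notin A_{m,t-1}$) one gets $x^{a_t}=-x+2t/2^m\in A_{m,t-1}$. Pulling $x^{a_t}$ back across $a_{t-1},a_{t-2},\dots$ (valid as long as the copy stays below the next mirror and that mirror exists, which is exactly what the hypothesis $2t-k>-2^{2m}$ ensures) deposits the points $x-2i/2^m$; those still nonnegative propagate unchanged to $A_{m,0}$, and the negative ones bounce forward off $a_{-1},a_{-2},\dots$, each creating its reflection above the mirror it crosses, which produces the points $-x+2i/2^m$. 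Assembling these together with $x$ itself gives \eqref{welldist+keven} for $k$ even and \eqref{welldist+kodd} for $k$ odd.

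The main obstacle is the billiard bookkeeping in the forward passes: following every reflected copy as it bounces between the equally spaced mirrors $a_j$ — deciding which copies survive a given polarization (a copy below a mirror persists only if its mirror image is already present, which is exactly why the copies deposited in the backward pass are needed) and which new copies are created — and verifying that the union of all of them is \emph{exactly} the asserted list, with the correct even/odd split and index ranges and nothing extraneous. I would control this by fixing once and for all the invariant stated above and checking the one-mirror inductive step through a short case analysis on the position of each copy relative to the next mirror.
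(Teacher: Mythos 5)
Your plan for part (i) and the two \emph{In particular} claims coincides with the paper's argument. The backward pass is exactly the paper's first induction (their formula giving $x,\, 2(k+i)/2^m-x\in A_{m,k+i-1}$ for $i=1,\dots,-k$, obtained by iterating the property that $y\in P_aC$ with $y\le a$ forces $\{y,2a-y\}\subset C$), and the forward pass with the invariant ``$A_{m,\ell}$ contains $\{x+2i/2^m\}_i\cup\{-x+2i/2^m\}_i$ for an $\ell$-dependent range of $i$'' is precisely the paper's second induction (their displayed invariant split by parity of $j$). Your derivation of \eqref{inclusions-} and \eqref{inclusions+} by applying the pointwise statements to each of the two adjacent dyadic cells and intersecting the lists is also how the paper closes those claims.

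For part (ii) you take a genuinely different route. The paper does \emph{not} redo the billiard bookkeeping with a seed at $x^{a_t}=2t/2^m-x$; instead it records the elementary identity $(P_aA)-b=P_{a-b}(A-b)$ (their property P5), iterates it to show that translating the whole construction by $-(t-1)/2^m$ carries $A_{m,j+t-1}$ to a polarization sequence $\bigl(A-(t-1)/2^m\bigr)_{*,m,j}$ started from a shifted left endpoint, and then observes that the shifted point $z=(t+1)/2^m-x$ lies in $\bigl(A-(t-1)/2^m\bigr)_{*,m,0}$ and falls into the negative cell indexed $t-k$. This reduces (ii) entirely to the machinery of (i); the hypothesis $2t-k>-2^{2m}$ is what guarantees the shifted index $t-k$ is still in range. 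Your proposal replaces this change of variables by a direct two-phase tracking argument (pull $x^{a_t}$ backward, deposit $x-2i/2^m$, then push forward and bounce the negative ones off $a_{-1},a_{-2},\dots$ to create $-x+2i/2^m$). This should be provable, and it avoids introducing the shifted notation $(\cdot)_{*,m,j}$, but it re-derives from scratch the bookkeeping that (i) already packages, and you yourself flag that the case analysis is the main obstacle; the paper's translation trick buys you (i) for free. As written your proposal is a sketch rather than a proof — the one-mirror inductive step for the forward passes (which copies survive a given polarization, which new copies appear, and why nothing extraneous accumulates) is named but not actually carried out — so the verdict is: same strategy for (i) and the corollaries, a workable but more laborious alternative to the paper's P5 reduction for (ii), with the crucial bookkeeping still to be filled in.
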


\begin{proof}
We will repeatedly use the following elementary properties of polarization, valid for any $a,b,y\in\R$.
\begin{enumerate}[(P1)]
  \item\label{P1} If $y\in P_a A$ and $y\leq a$ then $y, 2a-y\in A$;
  \item\label{P2} If $y, 2a-y\in A$ then they also belong to $P_a A$;
  \item\label{P3} If $y\in A$ and $y\leq a$ then $2a-y\in P_a A$;
 % \item\label{P4} If $y\in A$ and $y\geq a$ then $y\in P_a A$.
 \item\label{P4} If $y\geq a$ and $y\in P_a A\setminus A$ then $2a-y\in A$;
  \item\label{P5} $(P_a A)-b=P_{a-b}(A-b)$.
\end{enumerate}
Property P\ref{P5} deserves some explanation. We have $y\in P_a A$ if and only if either $y\geq a$ and $y\in A$, or $y\geq a$ and $2a-y\in A$, or $y\leq a$ and both $y\in A$ and $2a-y\in A$. These three conditions, with $y$ replaced by $y+b$, are equivalent to $y-b\in P_{a-b}(A-b)$.

To prove (i), we first prove that, for $i=1,\dots,-k$,
\begin{equation}\label{formula1}
 x, \frac{2(k+i)}{2^m}-x\in A_{m,k+i-1}.
\end{equation}
Indeed, when $i=-k$, the assumption $x\in A_{m,0}$ and  P\ref{P1} applied to $A_{m,0}=P_{0}A_{m,-1}$, implies $x,-x\in A_{m,-1}$. If \eqref{formula1} is valid for a given $i\ge 2$, then the inductive assumption $x\in A_{m,k+i-1}$ and P\ref{P1} applied to $A_{m,k+i-1}=P_{(k+i-1)/2^m}A_{m,k+i-2}$, implies \eqref{formula1} for $i-1$.

Now we prove that, for $j=1,\dots,-k$, both when $j=2l$ is even and when $j=2l+1$ is odd,
\begin{equation}\label{formula2}
 x, x+\frac{2}{2^m},\dots,x+\frac{2l}{2^m},
 \frac{2(k+l+1)}{2^m}-x, \frac{2(k+l+2)}{2^m}-x,\dots, \frac{2(k+j)}{2^m}-x \in A_{m,k+j}.
\end{equation}
Again we argue by induction. Assume $j=1$. In this case $l=0$ and \eqref{formula2} amounts to
\[
  x, \frac{2(k+1)}{2^m}-x \in A_{m,k+1},
\]
which is a consequence of \eqref{formula1} with $i=1$ and P\ref{P2}, with $A=A_{m,k}$ and $a={(k+1)/2^m}$.
Assume \eqref{formula2} valid for a given $j<-k$. We distinguish between the cases $j$ even and $j$ odd. First assume  $j=2l+1$ odd. The points $x,2(k+j+1)/2^m-x\in A_{m,k+j}$, by \eqref{formula1} with $i=j+1$, and are symmetric with respect to $(k+j+1)/2^m$. Thus, by P\ref{P2}, they also belong to $A_{m,k+j+1}$. Similarly the points
\[
 x+\frac{2t}{2^m}\quad\text{ and } \quad \frac{2(k+j-t+1)}{2^m}-x, \quad\text{for $t=1,\dots,l$,}
\]
belong to $A_{m,k+j}$, by the inductive hypothesis, are symmetric with respect to $(k+j+1)/2^m$ and P\ref{P2} implies that they also belong to $A_{m,k+j+1}$. When $t=l$,
\begin{equation}\label{formula3}
\frac{2(k+j-t+1)}{2^m}-x= \frac{2(k+(l+1)+1)}{2^m}-x,
\end{equation}
and thus to prove \eqref{formula2} for $j+1=2(l+1)$ it remains only to prove that $x+2(l+1)/{2^m}\in A_{m,k+j+1}$. This follows from P\ref{P3} with  $a=(k+j+1)/2^m$ and the role of $y$ played by  $2(k+l+1)/2^m-x$. Indeed, $2(k+l+1)/2^m-x\leq a$ since $x>k/2^m$, and
\[
 \frac{2(k+j+1)}{2^m}-\left(\frac{2(k+l+1)}{2^m}-x\right)=x+\frac{2(l+1)}{2^m}.
\]
This concludes the inductive step when $j=2l+1$ is odd. The case $j=2l$ even is done similarly, with the only difference that \eqref{formula3} becomes, when $t=l$,
\[
 \frac{2(k+j-t+1)}{2^m}-x= \frac{2(k+l+1)}{2^m}-x,
\]
and that the last step of the previous argument is not needed.

When $j=-k$, \eqref{formula2} implies that  $A_{m,0}$ contains
\begin{equation}\label{case_xnegative}
x+\frac{2}{2^m},\dots, x+\frac{2l}{2^m}, \frac{2(k+l+1)}{2^m}-x, \frac{2(k+l+2)}{2^m}-x, \dots, -\frac{2}{2^m}-x, -x.
\end{equation}
When $k$ is even or odd,  \eqref{case_xnegative} becomes \eqref{welldist-keven} or \eqref{welldist-kodd}, respectively. The last conclusion of part (i) is a consequence of these formulas applied to $x\in A_{m,0}\cap(k/2^m, (k+1)/2^m]$ and to $x\in A_{m,0}\cap((k+1)/2^m, (k+2)/2^m]$.

Next, we prove (ii).  The assumption $x\in A_{m,t}$ implies that
\begin{equation}\label{reflected_x}
 x\in A_{m,t},\quad x\notin A_{m,t-1} \quad\text{and}\quad \frac{2t}{2^m}-x\in A_{m,t-1},
\end{equation}
where since $x\ge 0\ge t/2^m$, the last condition follows by P\ref{P4} with $A=A_{m,t-1}$ and $a=t/2^m$.
Property P\ref{P5} can be iterated and it implies, for $j\in\Z$,  $j>-2^{2m}-t+1$,
\begin{multline}\label{translation}
\big(P_{(j+t-1)/{2^m}}\circ P_{(j+t-2)/{2^m}}\circ \dots\circ P_{-2^m} A\big)-\frac{t-1}{2^m}=\\
=P_{j/2^m}\circ P_{(j-1)/2^m}\circ\dots\circ P_{-2^m-(t-1)/2^m}\left(A-\frac{t-1}{2^m}\right).
\end{multline}
We denote  the set in the right-hand side of the previous formula by
\[
 \left(A-\frac{t-1}{2^m}\right)_{*,m,j},
\]
where we put a $*$ in the subscript to distinguish this notation from the previous one, since the iteration of the polarizations starts with $P_{-2^m-(t-1)/2^m}$ instead of $P_{-2^m}$. Formula~\eqref{translation} becomes
\begin{equation}\label{translation2}
A_{m,j+t-1}-\frac{t-1}{2^m}=\left(A-\frac{t-1}{2^m}\right)_{*,m,j}.
\end{equation}
By \eqref{reflected_x} and \eqref{translation2} with $j=0$,
\[
z=\frac{2t}{2^m}-x-\frac{t-1}{2^m}=\frac{t+1}{2^m}-x\in\left(A-\frac{t-1}{2^m}\right)_{*,m,0}.
\]
We apply to $z$ the results and the methods used to prove (i). We have $z\in((t-k)/2^m,(t-k+1)/2^m]$ and $z\le 0$. The assumption on $t$  is equivalent to the condition $t-k+1>-2^{2m}-(t-1)$, needed to apply (i) with $x$ and $k$ replaced by $z$ and $t-k$, respectively. Part (i) gives, if $k-t=2l$ or $2l+1$,
$$
 z+\frac{2}{2^m},\dots,z+\frac{2l}{2^m},
 \frac{2(t-k+l+1)}{2^m}-z, \dots,  -z \in \left(A-\frac{t-1}{2^m}\right)_{*,m,0}.
$$

By using the same ideas used to prove \eqref{formula2}, the formula which yields (i), one proves, for $j=0,\dots,1-t$, with $l\in\Z$ defined by $j+k-t=2l$ or $2l+1$,
$$
 z+\frac{2j}{2^m},\dots,z+\frac{2l}{2^m},
 \frac{2(t-k+l+1)}{2^m}-z, \dots,  -z \in \left(A-\frac{t-1}{2^m}\right)_{*,m,j}.
$$
When $j=1-t$, this becomes
$$
 z+\frac{2(1-t)}{2^m},\dots,z+\frac{2l}{2^m},
 \frac{2(t-k+l+1)}{2^m}-z, \dots,  -z \in \left(A-\frac{t-1}{2^m}\right)_{*,m,1-t},
$$
where $1+k-2t=2l$ or $2l+1$.
Using \eqref{translation2}, with $j=1-t$,  and the definition of $z$, one can express the previous formula in terms of $x$ and $A_{m,0}$. We get that, if $1+k-2t=2l$ or $2l+1$,
\begin{equation}\label{case_xpositive}
 -x+\frac{2}{2^m},-x+\frac{4}{2^m},\dots,-x+\frac{2(t+l)}{2^m},
 x+\frac{2(t-k+l)}{2^m}, \dots,  x-\frac2{2^m} \in A_{m,0}.
\end{equation}
When $k$ is even, $1+k-2t$ is odd and \eqref{case_xpositive} becomes \eqref{welldist+keven}, while when $k$ is odd, $1+k-2t$ is even and \eqref{case_xpositive} becomes \eqref{welldist+kodd}.
\end{proof}

The following compactness result is essentially proved in \cite[Lemma~3]{VS2}.

\begin{prop}\label{compactness}
Let $u\in L^p(\R^n)$ be nonnegative, $1\leq p <\infty$. If, for every $m\in \N$, the function $u_m$ is obtained from $u$ via a finite number of polarizations with respect to closed halfspaces containing $o$, then $(u_m)$ is relatively compact in $L^p(\R^n)$.
\end{prop}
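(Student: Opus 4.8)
The plan is to combine a standard $L^p$-density argument with the Arzelà--Ascoli theorem, following \cite[Lemma~3]{VS2}. Write each $u_m$ as a finite composition $u_m=P_{H_{m,\ell_m}}\circ\cdots\circ P_{H_{m,1}}u$, where the hyperplanes $H_{m,i}$ are oriented so that $o\in H_{m,i}^+$. I will use three properties of polarization that hold for any orientation: (a) each $P_H$ is $L^p$-contracting, being a rearrangement on $\cV(\R^n)$; (b) each $P_H$ reduces the modulus of continuity $\omega_d$ (polarization is smoothing; see Example~\ref{ex429}(ii)) and maps continuous functions to continuous functions (the pasting lemma applied on the closed half-spaces $H^+$ and $H^-$); and (c) $\|P_Hf\|_\infty=\|f\|_\infty$. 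In addition I will use one property specific to the orientation $o\in H^+$: if $v$ vanishes outside the ball $B(o,\rho)$, then so does $P_Hv$. Indeed, by \eqref{pol} this is immediate on $H^-$, while if $x\in H^+$ and $P_Hv(x)=\max\{v(x),v(x^\dagger)\}\ne0$ then either $v(x)\ne0$, so $x\in B(o,\rho)$, or $v(x^\dagger)\ne0$; in the latter case $x^\dagger\in B(o,\rho)$, and since $H$ is the perpendicular bisector of the segment $[x,x^\dagger]$ and $o,x\in H^+$, we get $\|x\|=\|x-o\|\le\|x^\dagger-o\|=\|x^\dagger\|\le\rho$.

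Fix $\ee>0$. First I would choose a nonnegative $v\in C_c(\R^n)$ with $\|u-v\|_p\le\ee/2$, say with $\supp v\subset B(o,\rho)$, and set $\tilde\omega_d=\omega_d(v)$, which tends to $0$ as $d\to0^+$. For each $m$, let $v_m=P_{H_{m,\ell_m}}\circ\cdots\circ P_{H_{m,1}}v$ be obtained from $v$ by the \emph{same} polarizations that produce $u_m$. By the properties above, every $v_m$ is continuous, satisfies $0\le v_m\le\|v\|_\infty$, vanishes outside $B(o,\rho)$, and obeys $\omega_d(v_m)\le\tilde\omega_d$ for all $d>0$; hence the family $\{v_m\}$ is uniformly bounded and equicontinuous on the compact set $B(o,\rho)$. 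By Arzelà--Ascoli it is relatively compact in $C(B(o,\rho))$, and since uniform convergence on the bounded set $B(o,\rho)$ of functions vanishing outside it forces convergence in $L^p$, the family $\{v_m\}$ is relatively compact, hence totally bounded, in $L^p(\R^n)$. In particular it admits a finite $\ee/2$-net in $L^p(\R^n)$.

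Finally I would invoke the $L^p$-contraction property: because $u_m$ and $v_m$ arise from $u$ and $v$ by the identical composition of polarizations, $\|u_m-v_m\|_p\le\|u-v\|_p\le\ee/2$ for all $m$. Consequently a finite $\ee/2$-net for $\{v_m\}$ is a finite $\ee$-net for $\{u_m\}$. Since $\ee>0$ was arbitrary, $\{u_m\}$ is totally bounded in the complete space $L^p(\R^n)$, and therefore relatively compact.

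I do not anticipate a genuine difficulty here; the one point requiring a little care — and the only place where the hypothesis that the half-spaces contain $o$ is used — is the claim that polarization preserves the property of being supported in a fixed ball centred at $o$, which is what keeps the family $\{v_m\}$ within reach of Arzelà--Ascoli. Everything else reduces to the standard properties of polarization recorded in Section~\ref{Properties} and Example~\ref{ex429}, together with the density of $C_c(\R^n)$ in $L^p(\R^n)$ for $1\le p<\infty$.
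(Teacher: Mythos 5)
Your proof is correct. All the auxiliary claims hold: polarization is $L^p$-contracting (it is a rearrangement on $\cV(\R^n)$, and the paper records that any such rearrangement is $L^p$-contracting for $1\le p<\infty$); polarization maps continuous functions to continuous functions and reduces the modulus of continuity; the orientation hypothesis $o\in H^+$ does give support preservation in $B(o,\rho)$, since $H$ is the perpendicular bisector of $[x,x^\dagger]$, so that points of $H^+$ (where $o$ also lies) are at least as close to $o$ as their mirror images; and the $\ee$-net transfer from $\{v_m\}$ to $\{u_m\}$ via $\|u_m-v_m\|_p\le\|u-v\|_p\le\ee/2$ is sound.

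The route is genuinely different from the paper's, though. The paper simply cites Van Schaftingen's Lemma 3 in \cite{VS2} and notes in a remark that the proof there verifies the hypotheses of the Riesz--Fr\'echet--Kolmogorov compactness criterion (boundedness, uniform $L^p$-equicontinuity under translation, and tightness). You instead approximate $u$ in $L^p$ by $v\in C_c(\R^n)$, run the same polarizations on $v$, and invoke Arzel\`a--Ascoli on the resulting family $\{v_m\}$, which is uniformly bounded, equicontinuous (modulus of continuity reducing), and supported in a common ball (orientation hypothesis). Your approach is self-contained and more elementary in the sense that it bypasses the RFK criterion entirely, trading it for the classical Arzel\`a--Ascoli theorem plus a standard total-boundedness/$\ee$-net argument. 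The RFK route works directly on $\{u_m\}$ in $L^p$ without introducing the auxiliary family $\{v_m\}$, but both hinge on the same structural inputs, namely the contraction property of polarizations and the fact that the orientation $o\in H^+$ controls supports. One small quibble: you frame your proof as ``following \cite[Lemma~3]{VS2},'' but that lemma is an RFK argument, not an Arzel\`a--Ascoli one; what you present is a different proof rather than a paraphrase.
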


The condition on $p$ in the previous result is not stated in \cite{VS2}.  However, the proposition is proved by noting that the sequence $(u_m)$ satisfies the assumptions of the Riesz-Fr\'echet-Kolmogorov compactness criterion, for which the stated condition on $p$ suffices; see, for example, \cite[Corollary~4.27]{Bre}.  The result in \cite[Lemma~3]{VS2} is stated only for the particular sequence of polarizations created in \cite{VS2}, but an inspection of the proof shows that it also applies to sequences $(u_m)$ that satisfy the hypotheses of Proposition~\ref{compactness}.

\begin{lem}\label{gablem}
Let $A\subset \R$ be compact. The sequence $A_{m,0}$ converges in the $L^1$ distance, essentially, to $\di_{So_{\{o\}}A}$, the image of $A$ under the Solynin map in $\R$ with respect to $\{o\}$ and the usual orientation of $\R$.
\end{lem}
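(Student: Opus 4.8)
The plan is to combine the compactness given by Proposition~\ref{compactness} with the combinatorial information in Lemma~\ref{welldistributed}. Since $A$ is compact we have $1_A\in L^1(\R)$, and for each $m$ the set $A_{m,0}$ is obtained from $A$ by finitely many polarizations $P_{k/2^m}$ with $k\le 0$, each taken with respect to the closed half-line $[k/2^m,\infty)$, which contains $o$; hence Proposition~\ref{compactness} shows that $(1_{A_{m,0}})_m$ is relatively compact in $L^1(\R)$. It therefore suffices to prove that whenever $1_{A_{m_j,0}}\to 1_B$ in $L^1$ along a subsequence, one has $B=\di_{So_{\{o\}}A}$, essentially; uniqueness of the limit then yields convergence of the full sequence.

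Fix such a subsequence. I would first record some elementary facts. (a) Polarizations preserve $\cH^1$, so $\cH^1(A_{m,0})=\cH^1(A)$ and hence $\cH^1(B)=\cH^1(A)$. (b) A polarization $P_{k/2^m}$ with $k\le 0$ fixes every point of $[0,\infty)$ lying in the current set, so $A\cap[0,\infty)\subset A_{m,0}$ for all $m$, whence $A\cap[0,\infty)\subset B$, essentially. (c) With $M=\max\{|\min A|,\max A\}$, an induction on the polarization index shows $A_{m,k}\subset[-M,M]$ whenever $2^m>M$; moreover any $x\in A_{m,0}\setminus A$ is produced by reflecting a point of some $A_{m,t-1}\subset[-M,M]$ across $t/2^m$, so $t=t(x,m)\ge -M2^{m-1}$, and this together with $|k/2^m|\le M$ makes the hypothesis $2t-k>-2^{2m}$ of Lemma~\ref{welldistributed}(ii) hold once $2^{m-1}>2M$, uniformly in the points involved.

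Next I would identify $B$ on the negative half-line. For even $k\le -2$ put $\mu_{m,k}=\cH^1\big(A_{m,0}\cap(k/2^m,(k+2)/2^m]\big)$ and let $\delta_m$ be the step function on $(-\infty,0)$ equal to $2^{m-1}\mu_{m,2j}$ on $(2j/2^m,(2j+2)/2^m]$. The first inclusions in \eqref{inclusions-} give $\mu_{m,k}\le\mu_{m,k'}$ for $k\le k'\le -2$, so each $\delta_m$ is nondecreasing on $(-\infty,0)$; and since $\delta_m$ is the conditional expectation of $1_{A_{m,0}}$ on a dyadic grid of mesh $2^{1-m}$, one has $\delta_m\to 1_B$ in $L^1(-\infty,0)$. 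An $L^1$-limit of monotone functions is monotone, and $1_B$ is $\{0,1\}$-valued, so $B\cap(-\infty,0)=[-\rho,0)$, essentially, for some $\rho\in[0,\infty)$ (finite by (a)). The reflected inclusions in \eqref{inclusions-} produce, for each such $k$, pairwise disjoint reflected-and-translated copies of $A_{m,0}\cap(k/2^m,(k+2)/2^m]$, contained in $A_{m,0}$, filling $[0,-(k+2)/2^m)$ with density $2^{m-1}\mu_{m,k}$; taking (along a further subsequence) $k=k_m$ with $k_m/2^m\to -\rho'$ and $\delta_m\to 1_{(-\rho,0)}$ a.e., one gets that $A_{m,0}$ has density tending to $1$ on $[0,\rho')$, hence $[0,\rho')\subset B$, essentially, for each $\rho'<\rho$, and so $[-\rho,\rho]\subset B$, essentially.

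Finally I would rule out extra mass beyond $\rho$, i.e.\ show $B\cap(\rho,\infty)\subset A$, essentially. If this fails, a Lebesgue density point $x_0>\rho$ of $(B\setminus A)\cap(\rho,\infty)$ gives, for large $m$, an even index $k_m$ with $k_m/2^m>\rho+\delta_0$ (a fixed $\delta_0>0$) for which $A_{m,0}\setminus A$ has density $c_m\ge 1/2$ on $(k_m/2^m,(k_m+2)/2^m]$; Lemma~\ref{welldistributed}(ii) and the reflected inclusions in \eqref{inclusions+} (applicable by (c)) then produce pairwise disjoint reflected-and-translated copies of this chunk, contained in $A_{m,0}$, filling $[-k_m/2^m,0)$ with density $c_m$, and the copies lying in $[-k_m/2^m,-\rho)$ already yield $\cH^1\big(A_{m,0}\cap[-k_m/2^m,-\rho)\big)\ge c_m(k_m/2^m-\rho)-o(1)\ge\delta_0/4$; in the limit this forces $B$ to have positive mass in $(-\infty,-\rho)$, contradicting the choice of $\rho$. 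Hence $B=[-\rho,\rho]\cup\big(A\cap(\rho,\infty)\big)$, essentially, and comparing measures gives $2\rho+\cH^1(A\cap(\rho,\infty))=\cH^1(A)$. Since $s\mapsto 2s+\cH^1(A\cap(s,\infty))$ is strictly increasing, this is precisely the equation determining $r_A$ in \eqref{Soldef}, so $\rho=r_A$ and $B=[-r_A,r_A]\cup(A\cap[0,\infty))=\di_{So_{\{o\}}A}$, essentially, as required. I expect the main difficulty to lie exactly in this discrete-to-continuum transfer: establishing the uniform confinement (c) that licenses Lemma~\ref{welldistributed}(ii), and extracting the sharp interval and ``no extra mass'' descriptions of $B$ from the monotonicity and tiling properties of the chunks $\mu_{m,k}$, while handling boundary indices and the density limits with care.
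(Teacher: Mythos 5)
Your proof is correct and follows essentially the same approach as the paper's: reduce via Proposition~\ref{compactness} to identifying the accumulation points of $(1_{A_{m,0}})$, use the translated and reflected inclusions from Lemma~\ref{welldistributed} to pin down the structure of any limit set $B$, and then determine the radius by comparing $\cH^1(B)$ with $\cH^1(A)$. The one place where the bookkeeping genuinely differs is the discrete-to-continuum step. The paper first upgrades \eqref{inclusions-} and \eqref{inclusions+} to the coarser dyadic scales $2^l$ (giving \eqref{inclusions-gen} and \eqref{inclusions+gen}), passes the resulting measure inequalities \eqref{inclusions-measure} and \eqref{inclusions+measure} to the limit at fixed $l$, and then reads off the structure of $B$ from a density-point statement. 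You instead work at the fine scale $2^{1-m}$ throughout: the conditional-expectation step function $\delta_m$ converts the fine-scale monotonicity into a monotone $L^1$ limit on $(-\infty,0)$, the reflected copies give $[-\rho,\rho]\subset B$, and the ``no extra mass'' claim is established by an explicit mass-transport contradiction using the reflected half of \eqref{inclusions+}. Both routes are valid and rest on the same two lemmas; yours trades the paper's coarse-scale inequalities for a cleaner martingale/monotonicity passage to the limit at the cost of a slightly more hands-on density argument for the positive half-line. Two small points worth tightening if you write this up: you should make explicit (as you do implicitly) that $1_{A_{m,0}\setminus A}\to 1_{B\setminus A}$ in $L^1$ before invoking the conditional-expectation convergence on $(0,\infty)$; and in the ``filling $[0,\rho')$'' step, the density bound on each cell only gives the lower bound $2^{m-1}\mu_{m,k_m}$, so the conclusion that $[0,\rho'')\subset B$ should be phrased via $\int_0^{\rho''} E_m 1_{A_{m,0}}\ge \rho'' 2^{m-1}\mu_{m,k_m}\to\rho''$ rather than via pointwise densities of $A_{m,0}$ itself.
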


\begin{proof}
Let $m_0\in\Z$ be such that $A\subset[-2^{m_0}, 2^{m_0}]$. By Proposition~\ref{compactness}, it is enough to show that any accumulation point of the sequence $(1_{A_{m,0}})$ in $L^1$ is essentially equal to the characteristic function of $\di_{So_{\{o\}}A}$. Since a given convergent subsequence contains a sub-subsequence converging almost everywhere, the limit is the characteristic function of a bounded measurable set $B$. Clearly $|B|=|A|$.  We have to show that $B=\di_{So_{\{o\}}A}$, essentially.

Let $l\in\N$,  $l\leq m$, $i\in\Z$ even, $i<0$,  and $s=0,2, 4,\dots,-(i+2)$. We  have
\begin{equation*}
\left(A_{m,0}\cap\left(\frac{i}{2^l},\frac{i+2}{2^l}\right]\right) + \frac{s}{2^l}=
\bigcup_{\substack{i2^{m-l}\leq k \leq (i+2)2^{m-l}-2\\\text{$k$ even}}}
\left(A_{m,0}\cap\left(\frac{k}{2^m},\frac{k+2}{2^m}\right]\right) + \frac{s2^{m-l}}{2^m}.
\end{equation*}
By applying the inclusions \eqref{inclusions-} to each set in the union in the right-hand side of the above formula, we obtain the following inclusions:
\begin{equation}\label{inclusions-gen}
A_{m,0}\supset \left(A_{m,0}\cap\left(\frac{i}{2^l},\frac{i+2}{2^l}\right]\right) + \frac{s}{2^l}, \quad  A_{m,0}\supset-\left(A_{m,0}\cap\left(\frac{i}{2^l},\frac{i+2}{2^l}\right]\right) - \frac{s}{2^l}.
\end{equation}
These inclusions imply, for $i$ and $l$ as above and any even $k\in \Z$, $|k|\leq -(i+2)$,
\begin{equation}\label{inclusions-measure}
\left|A_{m,0}\cap\left(\frac{i}{2^l},\frac{i+2}{2^l}\right]\right|\leq \left|A_{m,0}\cap\left(\frac{k}{2^l},\frac{k+2}{2^l}\right]\right|
\end{equation}
In a way similar to that used for obtaining \eqref{inclusions-gen}, one proves that, if $m_0\leq l\leq m$,  $i\ge 2$ is even, and $s=2, 4,\dots,i$, we have
\begin{equation}\label{inclusions+gen}
A_{m,0}\supset \left(\big(A_{m,0}\setminus A\big)\cap\left(\frac{i}{2^l},\frac{i+2}{2^l}\right]\right) - \frac{s}{2^l}, \quad  A_{m,0}\supset-\left(\big(A_{m,0}\setminus A\big)\cap\left(\frac{i}{2^l},\frac{i+2}{2^l}\right]\right) + \frac{s}{2^l}.
\end{equation}
This comes from \eqref{inclusions+} once that we check that if $x\in[k/2^m, (k+1)/2^m)$,  for some $k\in\Z$, $k>0$, then the index $t(x,m)$ satisfies $2t-k>-2^{2m}$. To prove this, we observe that
since  $t$ is the first index such that $x\in A_{m,t}$, then $2t/2^m-x\in A_{m,t-1}$, as explained in the proof of Lemma~\ref{welldistributed} (it is a consequence of P\ref{P4}, at the beginning of the proof). But $A_{m,t-1}\subset[-2^{m_0}, 2^{m_0}]$. So we have
\[
\frac{2t}{2^m}-\frac{k}{2^m}>\frac{2t}{2^m}-x\geq -2^{m_0}\geq -2^m,
\]
which gives $2t-k>-2^{2m}$.  The inclusions \eqref{inclusions+gen} imply that, for $i$ and $l$ as in \eqref{inclusions+gen} and even $k\in\Z$, $-i\leq k\leq i-2$, we have
\begin{equation}\label{inclusions+measure}
 \left| \big(A_{m,0}\setminus A\big)\cap\left(\frac{i}{2^l},\frac{i+2}{2^l}\right]\right|
 \leq \left|A_{m,0}\cap\left(\frac{k}{2^l},\frac{k+2}{2^l}\right]\right|.
\end{equation}

Letting $m\to \infty$, the inequalities \eqref{inclusions-measure} and \eqref{inclusions+measure} pass to the limit and hold with $B$ substituting $A_{m,0}$. These inclusions imply that if $x<0$, $x\in B$ (and also if $x>0$, $x\in B\setminus A$)  is a point of density $1$ for $B$ (for $B\setminus A$, respectively),  then almost every point in $(-|x|, |x|)$ is a point of density $1$ for $B$. We know that $B\supset A\cap[0,\infty)$, because  $A_{m,0}\supset A\cap[0,\infty)$ for each $m\in\N$. These properties imply
\[
 B=[-b,b]\cup(A\cap[0,\infty)), \text{ essentially,}
\]
for a suitable $b\geq0$. Let $r_A\geq0$ be such that $\di_{So_{\{o\}} A}=[-r_A,r_A]\cup (A\cap[0,\infty))$ (see \eqref{Soldef}).  We claim that $b=r_A$. Indeed, if $b>r_A$ then
$$
 B\supset [-b, -r_A]\cup \di_{So_{\{o\}} A}, \text{ essentially,}
$$
which implies $|B|>|\di_{So_{\{o\}} A}|=|A|$. The case $b<r_A$ can be proved similarly, by exchanging the role of $B$ and $\di_{So_{\{o\}} A}$ in the previous argument.
\end{proof}

\begin{thm}\label{gabcor}
Let $H$ be a hyperplane in $\R^n$ and let $1\le p<\infty$. Then the Solynin rearrangement $So_H$ with respect to $H$ is approximable in $L^p$ on $L^p_+(\R^n)$ by finite compositions of polarizations and hence the associated set map $\di_{So_H}$ is approximable on ${\mathcal{L}}^n$ by finite compositions of polarizations.
\end{thm}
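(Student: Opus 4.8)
The plan is to reduce the theorem to the one-dimensional result of Lemma~\ref{gablem} by slicing, and then to invoke Theorem~\ref{thm514}. Since Solynin rearrangements and polarizations commute with rigid motions, and composition with a rigid motion is an $L^p$-isometry preserving $L^p_+(\R^n)$, I would first reduce to the case $H=e_n^\perp$ oriented in the direction $e_n$. For $x\in H$ and $A\subset\R^n$, write $A_x=\{s\in\R:x+se_n\in A\}$ for the slice of $A$ on the fiber $H^\perp+x$, identified with $\R$. Both the Solynin set map and polarizations with respect to hyperplanes parallel to $H$ act fiber-wise: by Example~\ref{ex429}(iv), $(\di_{So_H}A)_x=\di_{So_{\{o\}}}(A_x)$ essentially, and a direct check from \eqref{pol2} shows that polarization with respect to $G_t:=e_n^\perp+te_n$, oriented in direction $e_n$, slices to polarization $P_{\{t\}}$ in $\R$.

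Next I would set up the approximating sequence. For $m\in\N$, let $W_m:\cV(\R^n)\to\cV(\R^n)$ be the finite composition of the polarizations with respect to the hyperplanes $G_t$, $t=-2^m,-2^m+1/2^m,\dots,0$, performed in exactly the order used to define $A_{m,0}$; being a finite composition of polarizations, $W_m$ is a rearrangement. A polarization maps a characteristic function to a characteristic function, so by Proposition~\ref{fromSetToFct} we have $W_m1_A=1_{\di_{W_m}A}$ with $\di_{W_m}A$ the corresponding finite composition of set polarizations applied to $A$; combining this with the fiber-wise identity for polarizations gives $(\di_{W_m}A)_x=(A_x)_{m,0}$ for $x\in H$, where the right-hand side is the one-dimensional construction of Lemma~\ref{gablem}.

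The core step is then a slicing argument. Fix $A\in\mathcal C^n$; each slice $A_x$ is compact, so Lemma~\ref{gablem} gives $\mathcal H^1\bigl((A_x)_{m,0}\,\triangle\,\di_{So_{\{o\}}}(A_x)\bigr)\to0$ as $m\to\infty$ for every $x\in H$. As polarizations and the Solynin map preserve $\mathcal H^1$, this quantity is at most $2\mathcal H^1(A_x)$, and $\int_H 2\mathcal H^1(A_x)\,d\mathcal H^{n-1}(x)=2\mathcal H^n(A)<\infty$. Hence Fubini's theorem together with the dominated convergence theorem yields
\[
\|1_{\di_{W_m}A}-1_{\di_{So_H}A}\|_1=\int_H\mathcal H^1\bigl((A_x)_{m,0}\,\triangle\,\di_{So_{\{o\}}}(A_x)\bigr)\,d\mathcal H^{n-1}(x)\longrightarrow0
\]
as $m\to\infty$. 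Thus $\di_{So_H}$ is approximable on $\mathcal C^n$ by maps in $\{\di_W:W\in\cW\}$, where $\cW$ is the family of finite compositions of polarizations; that is, statement (iii) of Theorem~\ref{thm514} holds with $T=So_H$. Theorem~\ref{thm514} then gives that $So_H$ is approximable in $L^p$ on $L^p_+(\R^n)$ by finite compositions of polarizations and that $\di_{So_H}$ is approximable on $\mathcal L^n$ by finite compositions of polarizations, as required.

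The real work has already been done in Lemma~\ref{gablem}; what remains is routine bookkeeping. The only point needing care is the systematic use of ``essentially'': $\di_{So_H}$, $W_m$, and the sliced identities are all to be understood modulo null sets, and Fubini guarantees that an $\mathcal H^n$-null set in $\R^n$ has $\mathcal H^1$-null slice only for $\mathcal H^{n-1}$-almost every $x$, which suffices for the displayed integral. One should also check that the order of the polarizations composing $W_m$ matches the definition of $A_{m,0}$, so that the fiber identity $(\di_{W_m}A)_x=(A_x)_{m,0}$ is literally the content of Lemma~\ref{gablem}.
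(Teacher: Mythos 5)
Your proof is correct and takes essentially the same approach as the paper: the paper reduces the claim to the one-dimensional Lemma~\ref{gablem} via the fiber-wise structure of $\di_{So_H}$ and of polarizations with respect to hyperplanes parallel to $H$, and then invokes Theorem~\ref{thm514}. Your explicit Fubini/dominated-convergence argument merely spells out the slicing step that the paper compresses into a single sentence at the start of Section~\ref{Solynin}.
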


\begin{proof}
As was explained at the beginning of this section, Lemma~\ref{gablem} implies that $\di_{So_H}$ is approximable on ${\mathcal{C}}^n$ by finite compositions of polarizations.  The theorem now follows from Theorem~\ref{thm514} with $T=So_H$ and $\cW$ the class of finite compositions of polarizations.
\end{proof}

\section{Questions}\label{Questions}

\begin{prob}\label{prob522}
{\em For which rearrangements does the $L^{\infty}$ contraction property hold?  It holds for polarizations and the symmetric decreasing rearrangement, and more generally, the $(k,n)$-Steiner rearrangement; see \cite[Corollary~2.23 and Theorem~6.14]{Baer}.}
\end{prob}

\begin{prob}\label{prob2}
{\em Is there a result analogous to Theorem~\ref{thmm7} for maps $\di$ that do not necessarily respect $H$-cylinders, where the action of $\di$ on ${\cK}^n_n$ is described in terms of its associated contraction $\psi_{\di}$ from \eqref{eq:psi1}? }
\end{prob}

\begin{prob}\label{prob3}
{\em Let $\cK^n_n\subset \cE\subset \cL^n$.  Is a map $\di:\cE\to \cL^n$ which is approximable on $\cE$ by finite compositions of polarizations also weakly sequentially approximable on $\cE$ by polarizations?  Is the converse true?}
\end{prob}

\begin{prob}\label{prob4}
{\em Let $H$ be a hyperplane in $\R^n$ and let $\di_{B_H}$ be a Brock set map with parameter $0<b<1$.  Can $\di_{B_H}$ be weakly approximated on $\cK_n^n$ by finite compositions of polarizations (not necessarily all with respect to hyperplanes parallel to $H$)?}
\end{prob}

\begin{prob}\label{prob6}
{\em If $H$ is a hyperplane in $\R^n$, is $\di_{So_H}$ weakly sequentially approximable on ${\cL}^n$ by polarizations?}
\end{prob}

\begin{prob}\label{probTranslation}
{\em Does Theorem~\ref{thm514} hold when ``approximable" is replaced throughout by ``weakly sequentially approximable"?  A counterexample would follow if translations are not weakly sequentially approximable on $\cL^n$ by polarizations.}
\end{prob}

\end{document}